\title{Mirror symmetry for special nilpotent orbit closures}
\author[1]{Baohua Fu}
\author[2]{Yongbin Ruan}
\author[3]{Yaoxiong Wen}
\affil[1]{AMSS, HLM and MCM, Chinese Academy of Sciences, 55 ZhongGuanCun East Road, Beijing,
100190, China and School of Mathematical Sciences, University of Chinese Academy of Sciences,
Beijing, China\\ bhfu@math.ac.cn}
\affil[2]{Institute for Advanced Study in Mathematics, Zhejiang University, Hangzhou 310058, China \\
ruanyb@zju.edu.cn}
\affil[3]{{Korea Institute for Advanced Study}, {{85 Hoegiro, Dongdaemun-gu, Seoul}, {02455}, {Republic of Korea}}\\ { y.x.wen.math@gmail.com}}
\date{}
\newcommand{\cM}{{\mathcal M}}
\newcommand{\cN}{{\mathcal N}}
\newcommand{\cO}{{\mathcal O}}
\newcommand{\cP}{{\mathcal P}}
\newcommand{\g}{{\mathfrak g}}
\newcommand{\p}{{\mathfrak p}}
\newcommand{\fl}{{\mathfrak l}}
\newcommand{\fu}{{\mathfrak u}}
\newcommand{\n}{{\mathfrak n}}
\newcommand{\bC}{\mathbb{C}}
\def\0{{\mathcal O}}
\newcommand{\D}{{\mathscr D}}
\newcommand{\C}{{\mathbb C}}
\newcommand{\Ind}{\operatorname{Ind}}
\renewcommand{\bf}[1]{\textbf{#1}}
\newtheorem{theorem}{Theorem}[section]
\newtheorem{lemma}[theorem]{Lemma}
\newtheorem{proposition}[theorem]{Proposition}
\newtheorem{corollary}[theorem]{Corollary}
\newtheorem{example}[theorem]{Example}
\newtheorem{conjecture}[theorem]{Conjecture}
\newtheorem{remark}[theorem]{Remark}
\newtheorem{expectation}[theorem]{Expectation}
\newtheorem{question}[theorem]{Question}
\newtheorem{problem}[theorem]{Problem}
\newenvironment{proof}{{\noindent\it Proof}\quad}{\hfill $\square$ \\ }
\begin{document}
\maketitle
\begin{abstract}
	Motivated by geometric Langlands, we initiate a program to study the mirror symmetry between nilpotent orbit closures of a semisimple Lie algebra and those of its Langlands dual. The most interesting case is $B_n$ via $C_n$. Classically, there is a famous Springer duality between special orbits. Therefore, it is natural to speculate that the mirror symmetry we seek may coincide with Springer duality in the context of special orbits. Unfortunately, such a naive statement fails. To remedy the situation, we propose a conjecture which asserts the mirror symmetry for certain parabolic/induced covers of special orbits. Then, we prove the conjecture for Richardson orbits and obtain certain partial results in general. In the process, we reveal some very interesting and yet subtle structures of these finite covers, which are related to Lusztig's canonical quotients of special nilpotent orbits. For example, there is a mysterious asymmetry in the footprint or range of degrees of these finite covers. Finally, we provide two examples to show that the mirror symmetry fails outside the footprint.	
	\end{abstract}

\tableofcontents

\section{Introduction}

\subsection{Motivation from geometric Langlands program}

Geometric Langlands program is an old subject of mathematics. There has been a re-examination of the subject from a physical perspective during the last twenty years by Gukov, Kapustin, and Witten \cite{KW07, GW08, GW}, which has attracted a lot of attention from the geometry community.  Geometrically, geometric Langlands is a mirror conjecture of two moduli spaces of stable Higgs bundles $\cM(C, G, Higgs)$ via $\cM(C, {}^LG, Higgs)$, where $C$ is a fixed Riemann surface. Reductive complex Lie groups $G,  {}^LG$ are related by Langlands duality. The above mirror symmetry conjecture has been under investigation for almost twenty years, and progress has been slow. During the last ten years, Gukov-Witten (\cite{GW08}) has proposed in physics a version of geometric Langlands for the so-called moduli space of parabolic Higgs bundles $\cM(C, G, Higgs, \alpha_1, \cdots, \alpha_k)$ where $\alpha_i$'s are adjoint orbits of the Lie algebra $\mathfrak{g} $. From the Gromov-Witten theory point of view, these orbits $\alpha_i$ can be viewed as an insertion at a marked point. The above story was further generalized to the case of {\em wild ramification}, whose mathematical investigation was barely started. Back to the parabolic or mild ramified case, it is clear that a pre-request to formulate a mirror symmetry conjecture of the moduli space of parabolic Higgs bundles is a mirror symmetry conjecture between these adjoint orbits/insertions! It can be viewed as an infinitesimal local version of geometric Langlands. This is exciting since adjoint orbits are central and yet {\em classical} objects in the sense that there is no need to construct any moduli space! This infinitesimal local version of geometric Langlands is the focus of this article.

From the mirror symmetry perspective, it is always exciting to look for more nontrivial mirror pairs. During the last thirty years, the majority of examples studied in the community are complete intersections in toric varieties. During the last decade, there has been a growing interest in going beyond these examples. For example, a great deal of effort was spent studying complete intersections in nonabelian GIT quotients. Examples that appeared in geometric Langlands, such as adjoint orbits, are of very different nature. Geometrically, they possess many amazing properties such as hyperKähler structure and are considered as some of the most interesting examples from the mirror symmetry point of view. Algebraically, they play a central role in representation theory. Therefore, they are natural bridges between representation theory and mirror symmetry!

There are many versions of mirror symmetry. In this article, we will restrict ourselves to topological mirror symmetry, i.e., Hodge numbers. As we need to take the closure of nilpotent orbits, which are singular, we will use Batyrev's notion of stringy Hodge numbers (\cite{Ba98}).  This is also the framework considered by \cite{HT}, where a topological mirror symmetry conjecture for moduli spaces of Higgs bundles of type ${\rm SL}_n$ and  ${\rm PGL}_n$ is proposed, which was recently proven in \cite{GWZ} via p-adic integration and in \cite{MS} via the decomposition theorem.

Nilpotent orbit closures (up to normalization) are important examples of symplectic singularities (\cite{Be00}), which have been extensively studied both from the algebro-geometric part and from the representation theoretic part. However, their stringy E-functions have never been studied before. Note that the stringy Hodge numbers are equal to the Hodge numbers of its crepant resolution (if exists), while any symplectic resolution is a crepant resolution. Hence, if symplectic resolutions exist, then we are comparing Hodge numbers. Symplectic resolutions for nilpotent orbits are worked out in \cite{Fu1}, which leads us to consider first the case of Richardson orbits, and then we generalize to special orbits.

We want to emphasize that our work is just the beginning of the program, and much more needs to be investigated in the future. For example, we do not yet know how to formulate a general conjecture beyond special orbits.

\subsection{Motivation from the geometry of special nilpotent orbits}

Let $\g$ be a simple complex Lie algebra and $G^{\rm ad}$ its adjoint group.  Let $\cN$ be the set of all nilpotent orbits in $\g$, which is parametrized by weighted Dynkin diagrams (and by partitions in classical types).

For a nilpotent orbit $\cO$ in $\g$ and an element $x \in \cO$, we denote by $G_x^{\rm ad}$ the stabilizer of $G^{\rm ad}$ at $x$ and by $(G_x^{\rm ad})^\circ$ its identity component. The quotient group $G_x^{\rm ad}/(G_x^{\rm ad})^\circ$, which  is independent of the choice of $x$, is called the component group of $\cO$, denoted by $A(\cO)$. It turns out that $A(\cO)$ is an elementary 2-group for classical types (and  trivial for $\g =\mathfrak{sl}_n$).

A fundamental result proven by Springer (\cite{Spr}) gives a correspondence between irreducible representations of the Weyl group of $G$ and admissible pairs $(\cO, \psi)$, where $\cO$ is a nilpotent orbit in $\g$ and $\psi$ is an irreducible representation of $A(\cO)$. Later on, Lusztig \cite{Lus79} introduced special representations of Weyl groups, and it turns out these special representations are assigned to nilpotent orbits with trivial $\psi$ via the Springer correspondence. This gives a remarkable set of special nilpotent orbits, denoted by $\cN^{\rm sp}.$

As written in \cite{Lu97}, special nilpotent orbits play a key role in several problems in representation
theory, but unfortunately, their definition is totally un-geometrical. For this reason, special nilpotent orbits are often regarded as rather mysterious objects. For this reason, we are motivated to unveil some of the purely geometrical properties of special unipotent classes from the viewpoint of mirror symmetry.

We denote by ${}^L \g$ the Langlands dual Lie algebra of $\g$. Let ${}^L\mathcal{N}^{sp}$ be the set of special nilpotent orbits in ${}^L\g$. Let $G$ and ${}^LG$ be Langlands dual groups with Lie algebras $\g$ and ${}^L\g$ respectively. As $G$ and ${}^LG$ have the same Weyl group, there exists a natural bijection between  $\mathcal{N}^{sp}$ and ${}^L\mathcal{N}^{sp}$ via Springer correspondence, which we call \emph{Springer dual map} and denoted by
\begin{align*}
	S: \mathcal{N}^{sp} \longrightarrow {}^L\mathcal{N}^{sp}.
\end{align*}
It is observed by Spaltenstein that this bijection is order-preserving and dimension-preserving (\cite{Spa82}). Similar to the notation for Langlands dual, we denote $S(\0)$ by ${}^S\0$.

As $\g = {}^L\g$ except for type $B_n$ or $C_n$, the Springer dual map $S$ is non-identical only when $G$ is of type $B_n$ or $C_n$. From now on, we will assume that $G={\rm Sp}_{2n}$, then  ${}^LG = {\rm SO}_{2n+1}$.  Note that in this case, $G$ is simply-connected while ${}^LG$ is adjoint.

For a special nilpotent orbit $\0$, we denote by $\overline{\cO}^{\mathcal{SP}}$ the special piece of $\0$, which consists of elements in $\overline{\0}$ not contained in any smaller special nilpotent orbit closure.  By \cite[Theorem 0.9]{Lu97}, the $G$-equivariant cohomology of $\overline{\cO}^{\mathcal{SP}}$ and the ${}^LG$-equivariant cohomology of the Springer dual special piece $\overline{{}^S\0}^{\mathcal{SP}}$ are isomorphic as vector spaces.  This unveils a nice geometric property shared by Springer dual nilpotent orbits. Are there more geometric relations between closures of a special nilpotent orbit $\cO$ and its Springer dual ${}^S \cO$?

\subsection{Guiding philosophy and Lusztig's canonical quotient}

Note that nilpotent orbit closures are generally non-normal; the normality problem is completely settled in \cite{KP82, So05} for classical types. We denote by $\widetilde{\0} \to \overline{\0}$ the normalization map, then $\widetilde{\0}$ has only symplectic singularities in the sense of Beauville \cite{Be00}, which is the singular counterpart of hyperK\"ahler manifolds. 

It follows that $\widetilde{\0}$ has only Gorenstein canonical singularities. Hence we may consider its stringy E-function as defined in \cite{Ba98}. A naive guess would be that $\widetilde{\cO}$ and $\widetilde{{}^S\cO}$ have the same stringy E-function. Actually, this fails due to the following enlightening example.
\begin{example} \label{e.r=1}
Consider the following special nilpotent orbits, which are Springer dual:
$$
\0_C:=\0_{[2^2,1^{2n-4}]} \subset \mathfrak{sp}_{2n}, \quad \quad \0_B := \0_{[3,1^{2n-2}]} \subset \mathfrak{so}_{2n+1}.
$$
By \cite{KP82}, both closures $\overline{\0}_C, \overline{\0}_B$ are normal. Note that $\overline{\0}_B$ admits a crepant resolution given by $T^*\mathbb{Q}^{2n-1}  \to \overline{\0}_B$, which gives the stringy E-function of $\overline{\0}_B$ (where $q=uv$):  
$$
{\rm E}_{\rm st} (\overline{\0}_B) = {\rm E}(T^* \mathbb{Q}^{2n-1}) = {\rm E}(\mathbb{Q}^{2n-1}) q^{2n-1}.
$$
On the other hand, we can construct a log resolution for $\overline{\0}_C$ and then compute its stringy E-function explicitly (cf. Proposition \ref{p.r=1C}).  It turns out that ${\rm E}_{\rm st} (\overline{\0}_C)$ is not a polynomial if $n \geq 3$, which  shows that $\overline{\0}_C$ and $\overline{\0}_B$ have different stringy E-functions for $n \geq 3$.
\end{example}

How to remedy this? A nice observation from \cite{BK94} is that there exists a double cover from the minimal nilpotent orbit closure $\overline{\0}_{min}  \subset \mathfrak{sl}_{2n}$ to $\overline{\0}_C$, while we have a crepant resolution $T^*\mathbb{P}^{2n-1} \to \overline{\0}_{min}$. Note that $\mathbb{P}^{2n-1} $ and $\mathbb{Q}^{2n-1}$ have the same E-polynomials.  This implies that $\overline{\0}_{min} $ (or double cover of $\overline{\0}_C$) and $\overline{\0}_B$ have the same stringy E-function! This is quite surprising as the issue of finite covers was completely ignored in the mirror symmetry literature from the physical side.

Motivated by this, we can now formulate our guiding philosophy by the following non-precise expectation:
\begin{expectation} \label{exp}
Consider a pair of Springer dual special nilpotent orbits $(\cO, {}^S\cO)$. Then there exist (possibly many)    $G$-equivariant (resp. ${}^LG$-equivariant) finite covers $X\to \overline{\cO} $ (resp. ${}^L X \to \overline{{}^S\cO}$) such that the pairs $(X, {}^L X)$ are Springer dual and mirrors one to the other in some sense.
\end{expectation}

The main difficulty is determining which equivariant finite covers we will consider.  As discussed in the first subsection, we will restrict ourselves to topological mirror symmetry in this paper, while it is definitely worthwhile to investigate more in the future.

 For a nilpotent orbit $\cO$, denote by $\mathcal{A}(\cO):= G_x/ G_x^{\circ} $ for $x \in \cO$. Notice that $G={\rm Sp}_{2n}$, so $\mathcal{A}(\cO) = \pi_1(\cO)$, and ${}^LG={\rm SO}_{2n+1}$, so $\mathcal{A}({}^S\cO)= A({}^S\cO)$ is the usual component group.
Note that $G$--equivariant finite covers of $\overline{\cO}$ and  those for $\overline{{}^S\cO}$ are controlled by $\mathcal{A}(\0)$ and $\mathcal{A}({}^S\0)$ respectively.

 For a special nilpotent orbit $\0$, Lusztig's canonical quotient $\bar{A}(\cO)$ is a quotient of $\mathcal{A}(\cO)$, which plays an important role in Lusztig's classification of unipotent representations of finite groups of Lie type. Surprisingly, Lusztig's canonical quotient $\bar{A}(\cO)$ also plays an essential role in our approach to Expectation \ref{exp}. In fact, the finite covers we will consider share a kind of seesaw property governed by Lusztig's canonical quotient. This, in turn, gives a geometrical interpretation of Lusztig's canonical quotient(Proposition \ref{p.LusztigCanonicalQuotient}), which is somehow surprising to us.

We should point out that in a previous draft \cite{S}, it is observed that Lusztig's canonical quotient also plays a crucial role in the duality of singularity types between adjacent special nilpotent orbits.

\subsection{Mirror symmetry for parabolic covers of Richardson orbits} \label{s.Ric}

A closed subgroup $P \subset G$ is called parabolic if the quotient variety $G/P$ is projective.  Note that $G/P$ only depends on the conjugacy class of $P$.
Every parabolic subgroup is conjugated to one containing the fixed Borel subgroup, which is called \emph{standard}. A standard parabolic subgroup can be described via a subset $\Theta$ of simple roots $\Delta$, denoted by $P_{\Theta}$, i.e., its Lie algebra $\mathfrak{p}_{\Theta}$ has a Levi decomposition $\mathfrak{p}_{\Theta}= \fl _{\Theta} \oplus \fu _{\Theta}$, with  $\fl _{\Theta}=\mathfrak{h} \oplus \sum_{\alpha \in \langle \Theta \rangle } \g_{\alpha} $ and $\fu_{\Theta} = \sum_{\alpha \in \Phi^{+} \backslash \langle \Theta \rangle^{+}} \g_{\alpha} $, where $\Phi^+$ is the set of positive roots in the root system $\Phi$, and $\langle \Theta \rangle$ denote the sub-root system generated by $\Theta$ and $\langle \Theta \rangle^+=\langle \Theta \rangle \cap \Phi^+$.

For a parabolic subgroup $P \subset G$, the natural $G$--action on $T^*(G/P)$ is Hamiltonian, which induces the moment map $T^*(G/P) \to \g^*\simeq \g$. It is proven by Richardson that the image is a nilpotent orbit closure, which gives a generically finite projective morphism $\nu_P: T^*(G/P) \to  \overline{\cO}$.  Such orbits are called Richardson orbits, and $P$ (or its conjugacy class) is called a polarization of $\cO$. For a Richardson orbit $\cO$, we denote by ${\rm Pol}(\cO)$ the set of conjugacy classes of its polarizations, which is a finite set.

By taking the Stein factorization of $\nu_P$, we get a $G$--equivariant finite cover $\mu_P: X_P \to \overline{\cO}$, which is called the {\em parabolic cover} of $\cO$ associated with the polarization $P$.  Note that all these constructions depend only on the conjugacy class of $P$. Hence we will only consider standard parabolic subgroups from now on. Remark also that $X_P$ is normal by definition, so if $\nu_P$ is birational, then $X_P$ is just the normalization of $\overline{\cO}$.

There exists a canonical bijection between simple roots of $G= {\rm Sp}_{2n}$ and those of its dual group ${}^LG= {\rm SO}_{2n+1}$, which induces a canonical bijection between standard parabolic subgroups, denoted by $P \mapsto {}^LP$. We call ${}^LP$  the Langlands dual parabolic subgroup of $P$. In a similar way, we define the Langlands dual of Lie algebras $\mathfrak{p}$, $\mathfrak{l}$ and $\mathfrak{u}$, denoted respectively by ${}^L \mathfrak{p}$, ${}^L \mathfrak{l}$ and ${}^L\mathfrak{u}$.

It is known that Richardson orbits are special.  Our first main theorem in this paper gives a satisfactory answer to Expectation \ref{exp} in the case of parabolic covers of Richardson orbits.
\begin{theorem} \label{t.Richardson}
Let $\cO$ be a Richardson orbit in $\g=\mathfrak{sp}_{2n}$ and $P$ a polarization of $\0$.  Let ${}^LP$ be the Langlands dual parabolic subgroup in ${}^LG$ and ${}^S \cO \subset {}^L \g = \mathfrak{so}_{2n+1}$ the Springer dual of $\cO$.  Then
\begin{itemize}
\item[(1)] The orbit ${}^S\cO$ is Richardson, and Lusztig's canonical quotients for $\cO$ and ${}^S\cO$ are isomorphic.
\item[(2)] The parabolic subgroup ${}^LP$ is a polarization of ${}^S\cO$. Let $\mu_{{}^LP}: X_{{}^LP} \to \overline{{}^S\cO}$ be the associated parabolic cover. Then
the two covers $\mu_P$ and $\mu_{{}^LP}$ enjoy the seesaw property governed by Lusztig's canonical quotient. Namely, we have
\begin{align*}
	\deg \mu_P \cdot \deg \mu_{{}^LP} = |\bar{A}(\cO)| = |\bar{A}({}^S\cO)|.
\end{align*}
\item[(3)] For any $P \in {\rm Pol}(\0)$, the pair $(X_P, X_{{}^LP})$ of parabolic covers is a mirror pair, in the sense that they have the same stringy E-function.
\end{itemize}
\end{theorem}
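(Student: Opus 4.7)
Parts (1) and (2) will be established by combinatorial analysis of the partition classification of nilpotent orbits in classical types; part (3) is then essentially formal once (2) is in hand.

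For Part (1), I would parametrize the standard parabolic $P=P_\Theta\subset\Sp_{2n}$ by the composition $(r_1,\dots,r_k;2s)$ associated to its Levi $L\simeq\prod_i\GL_{r_i}\times\Sp_{2s}$, so that the Langlands dual ${}^LP\subset\SO_{2n+1}$ has Levi $\prod_i\GL_{r_i}\times\SO_{2s+1}$. Since $\cO=\Ind_L^G(0)$, its partition is determined by a classical explicit recipe (Collingwood--McGovern); a direct combinatorial check should then show that applying Lusztig--Spaltenstein duality to this partition yields the partition of $\Ind_{{}^LL}^{{}^LG}(0)$, proving simultaneously the Richardson property of ${}^S\cO$ and the identification of ${}^LP$ as its polarization. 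The isomorphism of canonical quotients $\bar{A}(\cO)\simeq\bar{A}({}^S\cO)$ follows either from the general theorem that Springer duality preserves $\bar{A}$ on special orbits (Lusztig), or, more concretely, from the explicit formula for $\bar{A}$ in classical types, which is manifestly stable under Spaltenstein duality.

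Part (2) is the technical heart. The degree of the moment map $\nu_P\colon T^*(G/P)\to\overline{\cO}$ admits a well-known group-theoretic description: over a generic $x\in\cO$ its fiber is a finite union of $G^x$-orbits whose count equals the index $[A(\cO):H_P]$ for a subgroup $H_P\subset A(\cO)$ depending on the polarization. The plan is to descend this to Lusztig's canonical quotient, producing $\bar{H}_P\subset\bar{A}(\cO)$ with $\deg\mu_P=|\bar{H}_P|$, and then to carry out the parallel analysis on the dual side to obtain $\bar{H}_{{}^LP}\subset\bar{A}({}^S\cO)\simeq\bar{A}(\cO)$. The seesaw identity then reduces to the complementarity $|\bar{H}_P|\cdot|\bar{H}_{{}^LP}|=|\bar{A}(\cO)|$, which I would establish by an explicit partition-level matching, exploiting that the Levi data splits into type-$A$ blocks common to both sides and a symplectic/orthogonal block whose contribution to $\bar{H}$ swaps in a controlled way under Langlands duality. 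I expect this complementarity to be the main obstacle of the proof.

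For Part (3), the argument is formal once (2) is established. The Stein factorization $T^*(G/P)\to X_P$ is a birational morphism from a smooth symplectic variety onto the normal variety $X_P$, which, as a finite cover of $\widetilde{\cO}$, inherits symplectic singularities; hence this map is a symplectic, in particular crepant, resolution of $X_P$. Batyrev's theorem then gives
\begin{align*}
{\rm E}_{\rm st}(X_P) = {\rm E}(T^*(G/P)) = {\rm E}(G/P)\cdot q^{\dim G/P},
\end{align*}
and likewise for $X_{{}^LP}$. Since the Langlands bijection identifies $W_\Theta=W_{{}^L\Theta}$ as parabolic subgroups of the common Weyl group $W$, the Bruhat decompositions of $G/P$ and ${}^LG/{}^LP$ match cell-by-cell, so $\dim G/P=\dim{}^LG/{}^LP$ and ${\rm E}(G/P)={\rm E}({}^LG/{}^LP)$, yielding ${\rm E}_{\rm st}(X_P)={\rm E}_{\rm st}(X_{{}^LP})$.
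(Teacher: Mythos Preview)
Your overall plan matches the paper's architecture closely. Parts (1) and (2) are handled in the paper by exactly the partition combinatorics you outline: Proposition~\ref{p.Springer-Richardson} checks via the collapse recipes that ${}^LP$ polarizes ${}^S\cO$, Lemma~\ref{equiv-canonical-quotient} verifies $\bar A(\cO)\simeq\bar A({}^S\cO)$ directly from the combinatorial formula for $\bar A$, and the seesaw is Corollary~\ref{c.DegreeDuality}, deduced from the product decomposition $\bar A(\cO)\simeq\bar A_P\times\bar A_{{}^LP}$ of Proposition~\ref{p.LusztigCanonical}.

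One point in your Part (2) sketch deserves care. You write that descent to $\bar A(\cO)$ produces $\bar H_P\subset\bar A(\cO)$ with $\deg\mu_P=|\bar H_P|$, but the degree is the \emph{index} $[\mathcal A(\cO):\mathcal A_P]$, and for this to factor through $\bar A(\cO)$ at all you must first know that $K(\cO)\subset\mathcal A_P$. This inclusion is not automatic and is the content of the paper's Proposition~\ref{p.A_P}, proved by matching Sommers' explicit description of $K(\cO)$ against Hesselink's description of $\mathcal A_P$. Once that is in hand, the complementarity you anticipate is exactly Proposition~\ref{p.LusztigCanonical}, established by tracking which generators of $\bar A(\cO)$ (read off from the partition $\mathbf d_{q,\varepsilon}$) lie to the left versus right of the dividing line in the induced partition. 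So your outline is right, but you should flag $K(\cO)\subset\mathcal A_P$ as a separate lemma rather than fold it into the word ``descend''.

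For Part (3) your route is genuinely different and cleaner. The paper argues by fibering $G/P$ and ${}^LG/{}^LP$ over ${\rm IG}(p,2n)$ and ${\rm OG}(p,2n+1)$ respectively, with identical type-$A$ flag fibers, and then invoking the explicit $E$-polynomial identity ${\rm E}({\rm IG}(p,2n))={\rm E}({\rm OG}(p,2n+1))$ from Proposition~\ref{p.EPolyGr}. Your Bruhat-cell argument bypasses all of this: since $W$ and $W_\Theta$ coincide for $G$ and ${}^LG$, the Schubert cells of $G/P$ and ${}^LG/{}^LP$ are in length-preserving bijection, giving ${\rm E}(G/P)={\rm E}({}^LG/{}^LP)$ immediately. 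This is more conceptual and avoids the Grassmannian computation entirely; the paper's approach, on the other hand, makes the Grassmannian identity explicit, which it later reuses.
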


In fact, we have a more precise understanding of Lusztig's canonical quotient as follows: let $K(\0) \subset \mathcal{A}(\0)$ be the kernel of the quotient $\mathcal{A}(\0) \to \bar{A}(\0)$.  For any polarization $P \in {\rm Pol}(\0)$, the parabolic cover $X_P \to \overline{\0}$ corresponds to a finite subgroup $\mathcal{A}_P$ of $\mathcal{A}(\0)$.  Similar notations apply for the dual part, i.e., $\mathcal{A}({}^S\cO) := {}^LG_{x^{\prime}} / {}^LG_{x^{\prime}}^{\circ}$ for $x^{\prime} \in {}^S\cO$, etc. Then we have a geometrical realization of Lusztig's canonical quotient as follows:
\begin{proposition} \label{p.LusztigCanonicalQuotient}
(i) We  have $K(\0) \subset \mathcal{A}_P$ and for the Springer dual, we have $K({}^S\0) \subset \mathcal{A}_{{}^LP}$.

 (ii) Let $\bar{A}_P $ be the quotient $\mathcal{A}(\0)/\mathcal{A}_P$ and $\bar{A}_{{}^LP} =\mathcal{A}({}^S\0)/\mathcal{A}_{{}^LP}$.  Then $\bar{A}(\0) \simeq \bar{A}({}^S\0) \simeq \bar{A}_P \times \bar{A}_{{}^LP}$.
\end{proposition}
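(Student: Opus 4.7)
The plan is to establish Proposition \ref{p.LusztigCanonicalQuotient} by explicit combinatorial analysis on partitions, taking Theorem \ref{t.Richardson} as the principal input. I would parametrize the Richardson orbit $\cO \subset \mathfrak{sp}_{2n}$ by its partition $\lambda$, computed from the Levi type of $P$, and similarly parametrize ${}^S\cO \subset \mathfrak{so}_{2n+1}$ by its Springer-dual partition. By the classical Springer--Steinberg description of component groups of nilpotent orbits in classical types, both $\mathcal{A}(\cO)$ and $\mathcal{A}({}^S\cO)$ are elementary abelian $2$-groups with canonical generators indexed by the distinct even (resp.\ odd) parts of $\lambda$ (resp.\ the dual partition); Lusztig's canonical quotient $\bar{A}(\cO)$ and its kernel $K(\cO)$ admit an explicit combinatorial presentation on these generators.

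The first step is to identify $\mathcal{A}_P \subset \mathcal{A}(\cO)$ and $\mathcal{A}_{{}^LP} \subset \mathcal{A}({}^S\cO)$ explicitly. Since $\nu_P : T^*(G/P) \to \overline{\cO}$ is the Springer moment map and $X_P \to \overline{\cO}$ is its Stein factorization, $\mathcal{A}_P$ is the subgroup of $\pi_1(\cO) = \mathcal{A}(\cO)$ stabilizing a chosen connected component of the general fiber of $\nu_P$. For Richardson orbits in the symplectic setting this is explicit: each Levi block of $P$ contributes (or not) a prescribed generator of $\mathcal{A}_P$ depending on whether the block is of general linear or symplectic type, and the analogous recipe on the ${}^LG$-side (where odd-dimensional orthogonal blocks replace symplectic ones) produces $\mathcal{A}_{{}^LP}$.

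To verify (i), I would show that each generator of $K(\cO)$, which in Lusztig's recipe arises from specific adjacent-parts patterns, is expressible as a product of generators that are visibly contained in $\mathcal{A}_P$ in the above presentation; the check for $K({}^S\cO) \subset \mathcal{A}_{{}^LP}$ is parallel. For (ii), the degree identity $\deg \mu_P \cdot \deg \mu_{{}^LP} = |\bar{A}(\cO)|$ from Theorem \ref{t.Richardson}(2), together with $\deg \mu_P = [\mathcal{A}(\cO):\mathcal{A}_P]$ and $\deg \mu_{{}^LP} = [\mathcal{A}({}^S\cO):\mathcal{A}_{{}^LP}]$, immediately yields $|\bar{A}_P| \cdot |\bar{A}_{{}^LP}| = |\bar{A}(\cO)|$. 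Since $\bar{A}(\cO)$ is an elementary abelian $2$-group, the product decomposition $\bar{A}(\cO) \simeq \bar{A}_P \times \bar{A}_{{}^LP}$ then reduces to exhibiting $\bar{A}_P$ and $\bar{A}_{{}^LP}$ as complementary subgroups inside $\bar{A}(\cO) \simeq \bar{A}({}^S\cO)$, i.e.\ to showing that their generators, read through the Springer-duality dictionary between $\lambda$ and its dual, are linearly independent modulo $K$.

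The main obstacle will be the precise combinatorial bookkeeping: correctly labeling the generators of $\mathcal{A}_P$ and $K(\cO)$ in terms of the parts of $\lambda$, and then tracking them through the Springer-dual partition to the corresponding data for $\mathcal{A}_{{}^LP}$ and $K({}^S\cO)$. Conceptually, the proposition expresses the striking fact that Langlands duality selects a canonical complementary splitting of $\bar{A}(\cO)$ into $G$-side and ${}^LG$-side contributions, an asymmetry that is invisible from either side alone and embodies the seesaw structure of Theorem \ref{t.Richardson}(2).
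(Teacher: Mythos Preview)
Your combinatorial approach to part (i) is essentially the paper's own: Proposition~\ref{p.A_P} proves $K(\cO) \subset \mathcal{A}_P$ by writing $\mathcal{A}(\cO)$ as an $\mathbb{F}_2$-vector space with generators indexed by the set $B(\mathbf{d}_\varepsilon)$, describing $K(\cO)$ via Sommers' conditions on the sets $S_{\mathrm{odd}}$, $S_{\mathrm{even}}$, and describing $\mathcal{A}_P$ via Hesselink's index set $I_\chi(\mathbf{d})$ recording where the collapses occur. The containment is then a direct combinatorial check, just as you outline.

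For part (ii), however, your plan has a circularity. You propose to take the degree identity $\deg \mu_P \cdot \deg \mu_{{}^LP} = |\bar{A}(\cO)|$ from Theorem~\ref{t.Richardson}(2) as input. But in the paper's logical structure this identity is Corollary~\ref{c.DegreeDuality}, which is \emph{deduced from} Proposition~\ref{p.LusztigCanonical} (the statement you are trying to prove), not the other way round. So you cannot cite Theorem~\ref{t.Richardson}(2) here. Fortunately this does not wreck your argument: you already note that the real content is exhibiting $\bar{A}_P$ and $\bar{A}_{{}^LP}$ as complementary subgroups of $\bar{A}(\cO)$, and once that is done the cardinality identity is automatic. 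So simply drop the appeal to Theorem~\ref{t.Richardson}(2) and go straight to the complementarity.

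On that complementarity step, the paper's mechanism is more specific than your sketch. The key is Lemma~\ref{l.quotient-group}: the ``remaining parts'' $\mathbf{d}_{q,\varepsilon}$ that index the generators of $\bar{A}(\cO)$ are obtained precisely as the $\varepsilon$-collapse of the union $\mathbf{d}' \cup {}^L\mathbf{d}'$, where $\mathbf{d}'$ (resp.\ ${}^L\mathbf{d}'$) records the positions at which the $C$-collapse (resp.\ $B$-collapse) occurs. There is then a single distinguished part $d'_{2k+1}$ which divides $\mathbf{d}_q$ into a left block and a right block; the generators of $\bar{A}_P$ sit strictly to one side of this divider and those of $\bar{A}_{{}^LP}$ strictly to the other, which gives the direct product decomposition immediately. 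Your phrase ``linearly independent modulo $K$'' is correct in spirit but misses this clean left/right splitting, which is what makes the argument go through without further bookkeeping.
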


Here are some nice examples of mirror pairs. Consider the isotropic Grassmannian ${\rm IG}(k, 2n)$  and the associated Springer map $\nu_k: T^*{\rm IG}(k, 2n) \to \overline{\0}_k$.  The dual side  is the orthogonal Grassmannian ${\rm OG}(k,2n+1)$ with the dual Springer map  ${}^L \nu_k: T^*{\rm OG}(k, 2n+1) \to \overline{{}^S\0}_k$. Assume $k\leq \frac{2n}{3}$ in the following (the case $k >  \frac{2n}{3}$ will be discussed in Example \ref{e.kbig}).

The Springer map  ${}^L \nu_k$ is always birational and ${}^S \0_k$ is given by the partition $[3^k, 1^{2n-3k+1}]$.  It follows that $\pi_1({}^S \0_k) = A({}^S\0_k) =\mathbb{Z}_2$ and Lusztig's canonical quotient $\bar{A}({}^S\0_k)$ is trivial for $k$ even and $\mathbb{Z}_2$ for $k$ odd. Note that the orbit closure $\overline{{}^S \0_k}$ has minimal singularity $A_1$, so it is normal by \cite{KP82}.

When $k$ is even, the orbit $\0_k$ has partition $[3^k, 1^{2n-3k}]$, and the Springer map $\nu_k$ is birational.  In this case,  $A(\0_k) =\bar{A}(\0_k) $ is trivial. The orbit closure $\overline{\0}_k$ is non-normal if $2n>3k$ and normal if $2n=3k$ by \cite{KP82}.  When $k$ is odd, then $\0_k$ is given by $[3^{k-1},2^2,1^{2n-3k-1}]$ and the Springer map $\nu_k$ is of degree 2. In this case, we have $\pi_1(\0_k) = A(\0_k) =\bar{A}(\0_k)  =\mathbb{Z}_2$ and the closure is normal by \cite{KP82}.  It follows that if $k$ is even, then $(\widetilde{\0}_k, \overline{{}^S\0}_k)$ is a mirror pair, while for $k$ odd, the pair $(X_k, \overline{{}^S\0}_k)$ is a mirror pair, where $X_k \to \overline{\0}_k$ is the double cover. Note that when $k=1$, this is Example \ref{e.r=1}.

On the other hand, the double cover ${}^LX_k \to \overline{{}^S\0}_k$ is out of the scope of Theorem \ref{t.Richardson}. It is then natural to ask the following
\begin{question} \label{question}
Assume  $k\leq \frac{2n}{3}$ is odd.  Is the pair $(\widetilde{\0}_k, {}^LX_k)$  a mirror pair?
\end{question}

\subsection{Induced covers and Langlands duality}

For a parabolic subgroup $P \subset G$, we denote by $\p$ its Lie algebra. Let $\p = \mathfrak{l} \oplus \fu  $ be the Levi decomposition of $\p$. For a nilpotent orbit $\0_{\bf t}$ in $\fl$, Lusztig and Spaltenstein \cite{L-S} showed that $G \cdot (\fu+\overline{\0}_{\bf t})$ is a nilpotent orbit closure, say $\overline{\0}$. The variety $\fu+ \overline{\0}_{\bf t}$ is $P$--invariant and the surjective map
\begin{align*}
    \nu=\nu_{P,{\bf t}}: G \times^P(\fu+\overline{\0}_{\bf t}) \longrightarrow \overline{\0}	
\end{align*}
is generically finite and projective, which will be called {\em a generalized Springer map}. An {\em induced orbit} is a nilpotent orbit whose closure is the image of a generalized Springer map.  We denote an induced orbit by $\Ind_{\p}^{\g}(\cO_{{\bf t}})$, which depends only on the $G$--orbit of the pair $(\fl, \0_{\bf t})$, namely if $\p$ and $\p'$ are two parabolic subalgebras of $\g$ with the same Levi factor $\fl$, then $\Ind_{\p}^{\g}(\cO_{{\bf t}})=\Ind_{\p'}^{\g}(\cO_{{\bf t}})$ for any nilpotent orbit $\0_{\bf t}$ in $\fl$ (cf. \cite[Theorem 7.1.3.]{CM93}). A nilpotent orbit is called {\em rigid} if it is not induced from any parabolic subgroup. As shown by Lusztig (\cite{Lus79}), any nilpotent orbit induced from a special one is again special.

For an induced orbit $\0=\Ind_{\p}^{\g}(\cO_{\bf t})$, we denote by $\mu=\mu_{P, {\bf t}}: X_{P, {\bf t}} \to \overline{\0}$ the $G$-equivariant finite morphism obtained from $\nu$ by Stein factorization, which will be called the {\em induced cover} of $\0$ associated to the induction $\0=\Ind_{\p}^{\g}(\cO_{\bf t})$.

  Our second main theorem is an extension of Theorem \ref{t.Richardson} to special nilpotent orbits, which reads as follows

\begin{theorem}\label{t.induced}
\begin{itemize}
\item[(1)] If $\0 \subset \g$ is a rigid special orbit, so is its Springer dual ${}^S\0$.  Moreover, we have
\begin{align*}
    \pi_1(\0)=A(\0) = \bar{A}(\0) \simeq \bar{A}({}^S\0) =A({}^S\0) = \pi_1({}^S\0).	
\end{align*}
\item[(2)] Springer dual special orbits have isomorphic Lusztig's canonical quotient.
\item[(3)] Any non-rigid special orbit $\0$  is induced from a rigid special orbit, i.e., $\0 =\Ind_{\p}^{\g}(\cO_{{\bf t}})$ for some rigid special orbit $\cO_{{\bf t}}$ in $\fl$.  Taking the Langlands dual, we obtain the induction for ${}^S\0$ as the image of the generalized Springer map
\begin{align*}
	{}^L\nu: {}^LG \times^{{}^LP}({}^L\fu+\overline{{}^S\0}_{\bf t} ) \to \overline{{}^S\0}.
\end{align*}
\item[(4)] Consider the induced cover $\mu: X_{P, {\bf t}} \to \overline{\0}$ from a rigid special orbit $\0_{\bf t}$ and the dual induced cover
${}^L\mu: X_{{}^LP, {}^L{\bf t}} \to \overline{{}^S\0}$. Then we have
\begin{align*}
	\deg \mu \cdot \deg {}^L\mu =  |\bar{A}(\0)/\bar{A}(\0_{\bf t})| = |\bar{A}({}^S\0) / \bar{A}({}^S\0_{\bf t})|.
\end{align*}
\end{itemize}
\end{theorem}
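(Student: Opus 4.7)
The plan is to treat all four parts combinatorially, using the partition descriptions of nilpotent orbits in $\mathfrak{sp}_{2n}$ and $\mathfrak{so}_{2n+1}$, the explicit formula for Springer duality (transpose followed by $B$- or $C$-collapse), the Kempken--Spaltenstein characterization of rigid orbits, and the known expressions for $A(\0)$ and $\bar{A}(\0)$ in classical types.

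For part (1), I would first recall that rigidity of $\0$ in classical Lie algebras is equivalent to explicit inequalities on the multiplicities of parts of the corresponding partition (Kempken for $C_n$, Spaltenstein for $B_n$), and that specialness is the parity condition on the transpose partition due to Lusztig--Spaltenstein. A direct check shows that the transpose-and-collapse bijection carries rigid special $C$-partitions of $2n$ to rigid special $B$-partitions of $2n+1$ and vice versa. For the claim on component groups, I would use that for a rigid special partition the combinatorial recipe for $A(\0)$ and for $\bar{A}(\0)$ (via Sommers' description in terms of chains in the partition) give the same elementary $2$-group, and that Springer duality matches the corresponding chains on the two sides. Since $G=\mathrm{Sp}_{2n}$ is simply connected, $A(\0)=\pi_1(\0)$, so the chain of isomorphisms in (1) drops out.

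Part (2) is, in principle, immediate from Lusztig's theory: $\bar{A}(\0)$ depends only on the special piece, which corresponds to the two-sided cell / family of Weyl group representations, and since Springer duality is induced by the identity on the Weyl group, the families on both sides match. To make this self-contained I would instead verify the claim by comparing Sommers' combinatorial formula for $\bar{A}(\0)$ (the elementary $2$-group whose rank equals the number of ``matched intervals'' in the partition) on the two sides, which is an easy book-keeping under the transpose/collapse operation.

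For part (3), I would invoke the Kempken--Spaltenstein reduction algorithm, which writes any non-rigid orbit as $\Ind_{\p}^{\g}(\0_{\bf t})$ with $\0_{\bf t}$ rigid; the task is to show that if $\0$ is special then one may take $\0_{\bf t}$ to be special in $\fl$ of the same type as $\g$ (so that the ``dual induction'' on ${}^LG\times^{{}^LP}({}^L\fu+\overline{{}^S\0}_{\bf t})$ makes sense). This reduces to a combinatorial check, running the algorithm while tracking the parity conditions defining specialness, and noting that the Levi factors appearing for $B_n$ and $C_n$ are of the form $\mathrm{GL}_k\times \mathrm{Sp}_{2m}$ and $\mathrm{GL}_k\times \mathrm{SO}_{2m+1}$ respectively, which match under Langlands duality. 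The compatibility $S(\Ind_{\p}^{\g}(\0_{\bf t}))=\Ind_{{}^L\p}^{{}^L\g}({}^S\0_{\bf t})$ is then a consequence of Spaltenstein's theorem that his duality commutes with induction, combined with the fact that Spaltenstein duality coincides with Springer duality on special orbits.

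For part (4), I would compute $\deg\mu$ as an index of component groups: by Lusztig--Spaltenstein there is a natural map $A(\0_{\bf t})\to A(\0)$, and the degree of the Stein factorization of the generalized Springer map equals the order of $A(\0)$ divided by the image of $A(\0_{\bf t})$ (times an $A_P$-correction analogous to the Richardson case treated in Section~\ref{s.Ric}). Using (2) to identify $\bar{A}(\0)\cong \bar{A}({}^S\0)$ and (1) to identify the component groups of the rigid seeds, the product $\deg\mu\cdot\deg{}^L\mu$ telescopes to $|\bar{A}(\0)|/|\bar{A}(\0_{\bf t})|$. The Richardson case $\0_{\bf t}=0$, where $|\bar{A}(\0_{\bf t})|=1$, recovers Theorem~\ref{t.Richardson}(2) and gives a consistency check. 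I expect part (4) to be the main obstacle: while the combinatorics of parts (1)--(3) are governed by well-known algorithms, pinning down the degree of the Stein factorization requires identifying $\bar{A}_P$, $\bar{A}_{{}^LP}$ and the image of $A(\0_{\bf t})\to A(\0)$ precisely enough to make the seesaw symmetry manifest on both sides of Langlands duality.
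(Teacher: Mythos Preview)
Your outline for parts (1)--(3) is essentially what the paper does: explicit partition combinatorics using the description of rigid special partitions, Sommers' recipe for $\bar A(\0)$, and a direct construction of an inducing rigid special orbit. One warning: you describe Springer duality as ``transpose followed by $B$- or $C$-collapse''. That is the order-\emph{reversing} Lusztig--Spaltenstein duality, not the order-\emph{preserving} bijection $S:\cN^{sp}\to{}^L\cN^{sp}$ used here, which on partitions is $\mathbf d\mapsto(\mathbf d^+)_B$ (add $1$ to the largest part, then $B$-collapse; see Proposition~\ref{p.Springerdual}). All of the paper's partition manipulations, including the key trick Lemma~\ref{l.trick} that propagates the ``$+1$'' through equal odd blocks, are based on this formula, so you will need to switch to it before the combinatorics can be carried out.

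For part (4) your proposal has a genuine gap. The idea of computing $\deg\mu$ as the index of the image of $A(\0_{\bf t})\to A(\0)$, corrected by some $A_P$, is suggestive but you give no mechanism for evaluating either the image or the correction, and the ``telescoping'' to the seesaw formula is not explained. The paper's argument avoids component-group chasing entirely and is much more concrete: it factors the induction into elementary steps $\fl=\mathfrak{gl}_m\oplus\g'$ and shows (following Hesselink and Namikawa) that each elementary generalized Springer map is birational or of degree $2$, with degree $2$ occurring exactly when a collapse is needed in the induced partition (Lemma~\ref{degree-Springer-map}). Hence $\deg\nu=2^c$ and $\deg{}^L\nu=2^b$ where $c,b$ count the $C$- and $B$-collapses in $\mathbf{Even}+\mathbf d_{rs}\to\mathbf d_C$ and $\mathbf{Even}+S(\mathbf d_{rs})\to\mathbf d_B$. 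The remaining work is then a purely combinatorial identity $q=a+b+c$, where $\bar A(\0)\simeq(\mathbb Z/2)^q$ and $\bar A(\0_{\bf t})\simeq(\mathbb Z/2)^a$: one tracks which parts survive Sommers' separation rule (Proposition~\ref{comb-q}) and checks that the surviving pairs are in bijection with the collapses on the two sides together with the distinct even parts of $\mathbf d_{rs}$. This collapse-counting idea is the missing ingredient in your plan.
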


As pointed out by E. Sommers, Claim (2) follows from \cite[Chapter 13]{Lu84}, and Claim (3) follows from \cite{L-S}. We will provide a combinatoric proof for Claim (2) as the proof from  \cite[Chapter 13]{Lu84}  is rather indirect.  For Claim (4), we will prove the following generalization of Proposition \ref{p.LusztigCanonicalQuotient}, which provides a better understanding of Lusztig's canonical quotient for induced orbits. Given an induction pair $(P, \cO_{\bf t})$, here $\cO_{\bf t}$ could be any special orbit, not necessarily rigid special. The induced cover $\mu: X_{P, {\bf t}} \to \overline{\0}$ corresponds to a finite subgroup $\mathcal{A}_{P, \cO_{\bf t}} \subset \mathcal{A}(\cO)$. Similar notations apply for the dual induction pair $({}^LP, {}^S\cO_{\bf t})$, i.e., $\mathcal{A}_{{}^LP, {}^S\cO_{\bf t}} \subset \mathcal{A}({}^S\cO)$.

\begin{proposition} \label{p.inducedCanonicalQuotient}
(i) $A(\cO_{\bf t})$ is a subgroup of $A(\cO)$, furthermore, $\bar{A}(\cO_{\bf t})$ is a subgroup of $\bar{A}(\cO)$. The same holds for Springer dual group ${}^S\cO_{\bf t}$ and ${}^S\cO$.

\noindent	(ii) $K(\cO) \subset \mathcal{A}_{P, \cO_{\bf t}}$ (resp. $K({}^S\cO) \subset \mathcal{A}_{{}^LP, {}^S\cO_{\bf t}} $), and $\bar{A}(\cO_{\bf t}) \subset \mathcal{A}_{P, \cO_{\bf t}}$ (resp. $\bar{A}({}^S\cO_{\bf t}) \subset \mathcal{A}_{{}^LP, {}^S\cO_{\bf t}}$).
	
\noindent	(iii) Let $\bar{A}_{P, \cO_{\bf t}}$ be the quotient $\mathcal{A}(\cO)/\mathcal{A}_{P, \cO_{\bf t}} $ and $\bar{A}_{{}^LP, {}^S\cO_{\bf t}} = \mathcal{A}({}^S\cO)/\mathcal{A}_{{}^LP, {}^S\cO_{\bf t}} $, then $\bar{A}(\cO) / \bar{A}(\cO_{\bf t}) \cong \bar{A}({}^S\cO)/ \bar{A}({}^S\cO_{\bf t}) \cong \bar{A}_{P, \cO_{\bf t}} \times \bar{A}_{{}^LP, {}^S\cO_{\bf t}} $.
\end{proposition}

Comparing Theorem \ref{t.induced} with Theorem \ref{t.Richardson}, what is missing for general special nilpotent orbits is the mirror symmetry part. How to formulate it in this situation? A natural idea is first to do it for rigid special orbits. Motivated by Theorem \ref{t.Richardson} and Theorem \ref{t.induced} (1), it is tempting to ask the following
\begin{question} \label{q.2}
Let $\0$ be a rigid special orbit and $\widehat{\0} \to \overline{\0}$ the finite map associated with its universal cover.  Similarly we have $ \widehat{{}^S\0} \to \overline{{}^S\0}$ on the dual side. Are $(\widehat{\0}, \widetilde{{}^S\0})$ and $(\widetilde{\0}, \widehat{{}^S\0})$ both mirror pairs?
\end{question}

\subsection{Asymmetry and mirror symmetry conjecture}

To approach Question \ref{question}, we take  a closer look of the degrees $(\deg \mu_P, \deg \mu_{{}^LP})$  in Theorem \ref{t.Richardson}.
It turns out that they form a continuous sequence. Namely, their value set is equal to the following (for some $\alpha,\beta,m$ determined by the partitions, see Proposition \ref{degree-range} for details):
\begin{align*}
	\{(2^\beta, 2^{\alpha+m}), (2^{\beta+1}, 2^{\alpha+m-1}), \cdots, (2^{\beta+m}, 2^\alpha)\}.
\end{align*}
We call this sequence the \emph{footprint} for the Springer dual pair $(\cO, {}^S\cO)$. It is surprising that the footprint is not symmetric in general, i.e., $\alpha \neq \beta$. One may wonder if mirror symmetry still holds outside the footprint. Question \ref{question} provides the first example to check.

We will consider more generally special spherical orbits in $B_n$ and $C_n$, which are Springer dual and  given by the following from \cite{Pan94} (for $r, l \geq 1$):
$$
\0_C:=\0_{r, l}^C := \0_{[2^{2r},1^{2l}]} \subset \mathfrak{sp}(4r+2l), \quad \quad  \0_B:=\0_{r, l}^B = \0_{[3,2^{2r-2},1^{2l+2}]} \subset \mathfrak{sp}(4r+2l+1).
$$

 It follows from \cite{KP82} that both closures $\overline{\0}_C $ and $\overline{\0}_B$  are normal. A nice observation (Proposition \ref{p.levy}) due to Paul Levy is that the double cover of $\0_B$ is given by
 $$
\0_D := \0_{r, l}^D := \0_{[2^{2r},1^{2l+2}]} \subset \mathfrak{so}(4r+2l+2).
$$

Let $G_C := {\rm Sp}_{2n}$ and $G_D := {\rm SO}_{2n+2}$, then we have the Jacobson-Morosov resolutions for nilpotent orbit closures:
\begin{align*}
	 G_C \times^{P_C} \n_C  \longrightarrow \overline{\cO}_C, \quad
	 G_D \times^{P_D} \n_D  \longrightarrow \overline{\cO}_D.
\end{align*}

As observed in \cite[Table 1]{BP19}, we have $\n_C \simeq {\rm Sym}^2 V_C$ and $\n_D \simeq \wedge^2 V_D$ for some $2r$--dimensional vector spaces $V_C$ and $V_D$.  Now take successive blowups along strict transforms of stratums defined by ranks
$$\widehat{\n}_C \to \n_C, \ \text{and} \ \widehat{\n}_D \to \n_D,$$
which are just affine versions of varieties of complete quadrics and complete skew forms (\cite{Be97, Th99}).
This gives log resolutions of $\overline{\0}_C$ and $\overline{\0}_D$:
\begin{align*}
	 G_C \times^{P_C} \widehat{\n}_C \longrightarrow \overline{\cO}_C, \quad
	 G_D \times^{P_D} \widehat{\n}_D  \longrightarrow \overline{\cO}_D.
\end{align*}
 This allows us to compute stringy E-functions for $\overline{\0}_C$ and $\overline{\0}_D$, which gives negative answers to both Question \ref{question}  and Question \ref{q.2} as follows:

\begin{proposition} \label{p.negative}
(i) When $r=1$, the orbits $\0_C$ and $\0_B$ are both Richardson. The universal cover of $\overline{\0}_C$ is a mirror of $\overline{\0}_B$, but $\overline{\0}_C$ is not a mirror of the universal cover of $\overline{\0}_B$.

(ii) When $r=2$, the orbits $\0_C$ and $\0_B$ are both rigid special.  The orbit closure $\overline{\0}_C$ is not a mirror of the universal cover of $\overline{\0}_B$ when $l=1$.
\end{proposition}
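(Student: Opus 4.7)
The plan is to establish $E_{\mathrm{st}}(\overline{\0}_C) \neq E_{\mathrm{st}}(\overline{\0}_D)$ in the two regimes $(r,l)=(1,l)$ and $(r,l)=(2,1)$. Since by Proposition \ref{p.levy} the universal cover of $\overline{\0}_B$ is exactly $\overline{\0}_D$, both assertions in (i) and (ii) reduce to this non-equality.

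I would first construct explicit log resolutions, as already indicated in the paragraph preceding the statement. Starting from the Jacobson-Morosov resolutions $G_C \times^{P_C} \n_C \to \overline{\0}_C$ and $G_D \times^{P_D} \n_D \to \overline{\0}_D$, and the identifications $\n_C \simeq \mathrm{Sym}^2 V_C$ and $\n_D \simeq \wedge^2 V_D$ with $\dim V_C = \dim V_D = 2r$, I would iteratively blow up the rank stratifications of $\n_C$ and $\n_D$. The resulting smooth varieties $\widehat{\n}_C$ and $\widehat{\n}_D$ are the affine analogues of the spaces of complete quadrics and complete skew forms; they carry SNC exceptional divisors indexed by rank strata, with discrepancies available from classical work (Vainsencher, De Concini--Procesi, Thaddeus, Bertram). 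Pushing these through the $G_C \times^{P_C}(\cdot)$ and $G_D \times^{P_D}(\cdot)$ constructions yields log resolutions $\widehat{Y}_C \to \overline{\0}_C$ and $\widehat{Y}_D \to \overline{\0}_D$ with explicit exceptional divisors $\{D_i^C, a_i^C\}$ and $\{D_j^D, a_j^D\}$.

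The second step is to apply Batyrev's formula
\begin{equation*}
E_{\mathrm{st}}(\overline{\0}) \;=\; \sum_{I} E(D_I^{\circ}) \prod_{i \in I} \frac{uv - 1}{(uv)^{a_i+1} - 1}
\end{equation*}
to each side, where $D_I^{\circ}$ is the locally closed intersection stratum. Each $D_I^{\circ}$ fibers as a tower of bundles over a partial isotropic (resp.\ orthogonal) flag variety inside $V_C$ (resp.\ $V_D$), with fibers given by open rank loci in $\mathrm{Sym}^2$ or $\wedge^2$ of successive subquotients, so every $E(D_I^{\circ})$ is a concrete product of E-polynomials of Grassmannians and affine spaces. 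Specializing: in case (i), $r=1$ forces $\wedge^2 V_D \simeq \mathbb{A}^1$, so no rank blowup is needed and $\nu_D$ is already a symplectic resolution (it is the classical Heisenberg-parabolic resolution of the minimal nilpotent orbit closure of $\mathfrak{so}(2l+6)$). Hence $E_{\mathrm{st}}(\overline{\0}_D)$ is a polynomial, whereas $E_{\mathrm{st}}(\overline{\0}_C)$ is computed in Proposition \ref{p.r=1C} and is \emph{not} a polynomial for $l \geq 1$, so the two cannot agree as rational functions. In case (ii), $r=2$ and $l=1$, the $C$-side log resolution has three exceptional divisors (strata $\mathrm{rank} \leq 1, 2, 3$ on $\mathrm{Sym}^2 V_C$) while the $D$-side has only one (the Pfaffian locus $\mathrm{rank} \leq 2$ on $\wedge^2 V_D$); both stringy sums reduce to explicit finite sums of rational functions in $q = uv$, and it suffices to exhibit one coefficient of disagreement in their Laurent expansions.

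The main obstacle I anticipate is the discrepancy bookkeeping: keeping track of the crepancies introduced by each successive blowup of the rank stratum and correctly combining them with those of the $G \times^P (\cdot)$ construction, together with pinning down each $D_I^{\circ}$ as an explicit homogeneous bundle. Each ingredient is classical, but assembling them into clean closed-form expressions that can be compared across the two sides --- and doing so uniformly in $l$ for case (i) --- requires care. Fortunately, the polynomiality dichotomy in case (i) sidesteps the need for a fully explicit $C$-side formula, so only the modest two-variable comparison in case (ii) must be carried out in full detail.
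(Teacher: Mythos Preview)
Your overall strategy matches the paper's: build log resolutions of $\overline{\0}_C$ and $\overline{\0}_D$ by blowing up rank strata in $\mathrm{Sym}^2 V_C$ and $\wedge^2 V_D$, then compare stringy $E$-functions via Batyrev's formula. For part (ii) this is exactly what the paper does (Propositions \ref{p.logresolutionC}, \ref{p.logresolutionD} and \ref{p.asymmetryRS}), comparing leading terms to avoid a full closed-form computation.

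However, your shortcut for part (i) contains a genuine error. When $r=1$ you have $\wedge^2 V_D \simeq \mathbb{C}$, so the Jacobson--Morosov map $G_D \times^{P_D} \mathfrak{n}_D \to \overline{\0}_D$ is indeed already smooth, but it is \emph{not} crepant: the zero section $G_D/P_D$ is a divisor with discrepancy $2l+2$ (this is the case $r=1$ of Proposition \ref{p.logresolutionD}). Equivalently, $\overline{\0}_D$ is the minimal nilpotent orbit closure of $\mathfrak{so}_{2l+6}$, which admits no symplectic resolution for $l\ge 1$ by \cite{Fu1}. Consequently $E_{\rm st}(\overline{\0}_D)$ is \emph{not} a polynomial in general (cf.\ Proposition \ref{p.r=1D}), and your polynomiality dichotomy collapses: both sides are non-polynomial rational functions in $q$. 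The paper instead computes both $E_{\rm st}(\overline{\0}_C)$ and $E_{\rm st}(\overline{\0}_D)$ explicitly (Propositions \ref{p.r=1C} and \ref{p.r=1D}) and observes that the two rational functions differ. You will need to do the same, which is not onerous since $\overline{\0}_D$ has only one exceptional divisor and Proposition \ref{p.Omin} handles it uniformly in $l$.

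A smaller point: you assert that ``both assertions in (i) and (ii) reduce to this non-equality,'' but part (i) also contains the \emph{positive} claim that the universal cover of $\overline{\0}_C$ is a mirror of $\overline{\0}_B$. This follows from Theorem \ref{t.Richardson} (or directly from Example \ref{e.r=1}), so you should at least point to it rather than fold it into the non-equality.
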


Motivated by these computations and the asymmetry of footprint, we  can now formulate the mirror symmetry conjecture for special nilpotent orbits as follows:
\begin{conjecture} \label{conj}
\begin{itemize}
\item[(i)]Let $\0_C$ be a rigid special orbit in $\mathfrak{sp}_{2n}$ and   $\0_B \subset \mathfrak{so}_{2n+1}$  its Springer dual. Let $\widehat{\0}_C \to \overline{\0}_C$ be the  universal cover. Then $(\widehat{\0}_C, \widetilde{\0}_B)$ is a mirror pair,  in the sense that they have the same stringy E-function.
\item[(ii)] For a special non-rigid orbit $\0 =\Ind_{\p}^{\g}(\cO_{{\bf t}})$ in $\mathfrak{sp}_{2n}$ induced from a rigid special orbit $\0_{\bf t}$, let $\widehat{\0_{\bf t}} \to \overline{\cO}_{\bf t}$ be the universal cover. Then we have  a  generically finite map
\begin{align*}
	\hat{\nu}=\widehat{\nu}_{P,{\bf t}}: G \times^P(\fu+\widehat{\0_{\bf t}} ) \to \overline{\0}.
\end{align*}
Let $\widehat{X}_{P, {\bf t}} $  be the finite cover of $\overline{\0}$ obtained by Stein factorization of $\hat{\nu}$.
Then  the pair $(\widehat{X}_{P, {\bf t}},  X_{{}^LP, {}^L{\bf t}})$ is a mirror pair, in the sense that they have the same stringy E-function.
\end{itemize}
\end{conjecture}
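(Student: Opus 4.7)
The strategy is to treat part (i) as the essential case and bootstrap part (ii) from it by a multiplicativity formula for the stringy E-function over the generalized Springer map. Once the mirror identity is established for rigid special orbits, the non-rigid case should follow formally by matching stringy E-functions on both sides, with the Stein-factorization bookkeeping controlled by the degree formula in Theorem \ref{t.induced}(4).

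For part (i), the plan is to generalize the computation of Proposition \ref{p.negative} from the families $(\0_{r,l}^C, \0_{r,l}^B)$ to every rigid special pair $(\0_C, \0_B)$. The first step is to identify the universal cover $\widehat{\0}_C \to \overline{\0}_C$ explicitly as a geometrically tractable symplectic variety, in the spirit of Paul Levy's identification (Proposition \ref{p.levy}) of the double cover of $\overline{\0}_{r,l}^B$ with $\overline{\0}_{r,l}^D$. The Kempken-Spaltenstein classification of rigid orbits severely restricts the possible partitions in type $C$, and one hopes that each $\widehat{\0}_C$ admits a realization as a nilpotent orbit closure in an ambient classical Lie algebra (``type-switched'' in the sense that the relevant partition lives in the opposite orthogonal/symplectic world). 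The second step is to build log resolutions of both $\widehat{\0}_C$ and $\widetilde{\0}_B$ as affine versions of the varieties of complete quadrics and complete skew forms: starting from the Jacobson-Morozov resolution $G \times^P \n \to \overline{\0}$, identifying $\n$ via a Bala-Carter style description with a space of symmetric or antisymmetric matrices (extending \cite[Table 1]{BP19}), and iteratively blowing up along strict transforms of rank strata. The stringy E-functions are then computed via Batyrev's formula on these log resolutions, and one checks that the two outputs coincide.

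For part (ii), I would factor the generalized Springer map as
\begin{equation*}
\hat{\nu}:\ G \times^P (\fu + \widehat{\0_{\bf t}}) \longrightarrow \widehat{X}_{P,{\bf t}} \longrightarrow \overline{\0},
\end{equation*}
with the first arrow birational (projective, generically finite, and with connected geometric fibers by Stein factorization) and the second finite. Since $G \to G/P$ is a Zariski-locally trivial $P$-bundle and the fiber $\fu + \widehat{\0_{\bf t}}$ is isomorphic to $\fu \times \widehat{\0_{\bf t}}$, a log resolution of $\widehat{\0_{\bf t}}$ pulls back to one of the source, yielding
\begin{equation*}
{\rm E}_{\rm st}\bigl(G \times^P (\fu + \widehat{\0_{\bf t}})\bigr) = {\rm E}(G/P) \cdot q^{\dim \fu} \cdot {\rm E}_{\rm st}(\widehat{\0_{\bf t}}).
\end{equation*}
Birationality of the first arrow, combined with the fact that both varieties have symplectic (hence Gorenstein canonical) singularities with trivial canonical class, transfers this identity to ${\rm E}_{\rm st}(\widehat{X}_{P,{\bf t}})$. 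The analogous identity on the B side, together with ${\rm E}(G/P) = {\rm E}({}^LG/{}^LP)$ (the Schubert cells of the two flag varieties are indexed by the common quotient $W_G/W_P$), $\dim \fu = \dim {}^L\fu$, and the mirror identity for the rigid Levi pair $(\0_{\bf t}, {}^S\0_{\bf t})$ from part (i), then completes the reduction.

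The main obstacle I anticipate is the first step of part (i): producing an explicit and uniform description of $\widehat{\0}_C$ for every rigid special orbit in $\mathfrak{sp}_{2n}$. Levy's identification in the $\0_{r,l}^C$ case was somewhat ad hoc, and a systematic treatment may require new input, perhaps from Namikawa's theory of local fundamental groups of symplectic singularities, from Losev's work on finite covers of nilpotent orbits, or from the explicit ``symplectic duality'' constructions in \cite{BP19}. A secondary obstacle is the combinatorial matching of the two stringy E-functions: even in the $r=1,2$ cases of Proposition \ref{p.negative} this required delicate computation, and in full generality a unifying combinatorial identity, possibly involving Hall-Littlewood or Kostka-Foulkes polynomials, may well be needed to close the argument.
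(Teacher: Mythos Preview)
The statement you are attempting to prove is a \emph{conjecture}: the paper states it as Conjecture~\ref{conj} and does not supply a proof. The only instances verified in the paper are the Richardson cases (e.g.\ $r=1$ in the spherical family), and those follow from Theorem~\ref{t.Richardson}, not from any new argument specific to the conjecture. For genuinely rigid special orbits the conjecture remains open; the computations in Section~\ref{sph-orb} are used only to \emph{motivate} the asymmetric formulation by ruling out the naive symmetric alternative, not to confirm even the $r=2$ case of part~(i). So there is no ``paper's own proof'' to compare against.

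Your outline is a sensible plan of attack, and your reduction of part~(ii) to part~(i) via the multiplicativity
\[
{\rm E}_{\rm st}\bigl(G \times^P (\fu + \widehat{\0_{\bf t}})\bigr) = {\rm E}(G/P)\, q^{\dim \fu}\, {\rm E}_{\rm st}(\widehat{\0_{\bf t}})
\]
together with ${\rm E}(G/P)={\rm E}({}^LG/{}^LP)$ is in the spirit of Proposition~\ref{top.ms}. One point to be careful about: you need the birational map from $G \times^P (\fu + \widehat{\0_{\bf t}})$ to its Stein image $\widehat{X}_{P,{\bf t}}$ to be crepant, which amounts to checking that the source has trivial canonical class as a $P$-equivariant bundle over $G/P$; this is true but is not quite as automatic as in the Richardson case $T^*(G/P)$.

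The real gap is exactly the one you flag: there is at present no uniform geometric model for $\widehat{\0}_C$ when $\0_C$ is rigid special in $\mathfrak{sp}_{2n}$. Levy's observation (Proposition~\ref{p.levy}) identifies a double cover on the \emph{$B$ side}, not the $C$ side, so it does not directly feed into your step one. The Brylinski--Kostant shared-orbit phenomenon handles the $r=1$ case on the $C$ side, but already for $r=2$ no analogous description of $\widehat{\0}_C$ is given in the paper, and the Jacobson--Morozov/complete-quadrics machinery of Section~\ref{sph-orb} resolves $\overline{\0}_C$ itself, not its universal cover. Until such a model is found, neither the log-resolution step nor the combinatorial matching can begin, so what you have is a program rather than a proof.
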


Here is the organization of the paper. We first recall combinatorics of nilpotent orbits in Section 2, where we prove that rigid special orbits are preserved by the Springer dual map, and Springer dual orbits have the same Lusztig's canonical quotient.  Section 3 is devoted to the proof of Theorem \ref{t.Richardson} and Proposition \ref{p.LusztigCanonicalQuotient}, where we also find some interesting examples of mirror pairs beyond parabolic covers of Richardson orbits. Section 4 is devoted to proving Theorem \ref{t.induced}. In the final section, we first study the asymmetry of footprints of Richardson orbits. Then we compute stringy E-functions for some special spherical nilpotent orbits, which proves Proposition \ref{p.negative}.

Notations: throughout this paper, the symbol $\equiv$ means congruence modulo 2, and $\mathbb{Z}_2 = \mathbb{Z}/2\mathbb{Z}$ is the elementary 2-group with one generator.

\section*{Acknowledgements}
This project was initiated when the third author visited the IASM (Institute for Advanced Study in Mathematics) of Zhejiang University in 2021. Both the second and third authors would like to thank the IASM for the wonderful research environment.  We are grateful to E. Sommers for helpful communications and suggestions to a previous version.   Y. R. would like to thank Gukov for a series of beautiful lectures in the IASM Geometry and Physics seminar and many valuable discussions afterward. Y. W. would like to thank Weiqiang He, Xiaoyu Su, Bin Wang, and Xueqing Wen for their helpful discussions and would also like to congratulate Prof. Choi for his support. B. F. is supported by the National Natural Science Foundation of China (No. 12288201).  Y. W. is supported by a KIAS Individual Grant (MG083901) at Korea Institute for Advanced Study and a POSCO Science fellowship.

\section{Nilpotent orbits and partitions}

\subsection{Special partitions}

Fix an integer $N$. We denote by $\cP(N)$ the set of partitions $\mathbf{d} =[d_1 \geq d_2 \cdots \geq d_N]$ of $N$ (here we allow 0 in the parts).
For $\varepsilon= \pm 1$, we define
\begin{align}
	\mathcal{P}_{\varepsilon}(N) = \big\{ [d_1, \ldots, d_N] \in \mathcal{P}_{}(N) \ \big{|} \ \sharp \{j \ | \ d_j =i \} \ \text{is even for all $i$ with} \ (-1)^i= \varepsilon      \big\}.
\end{align}

It is well-known that $\cP(N)$ (resp. $\cP_{1}(2n+1)$, $\cP_{-1}(2n)$) parameterizes nilpotent orbits in $\mathfrak{sl}_N$ (resp. $\mathfrak{so}_{2n+1}, \mathfrak{sp}_{2n}$). In other words, nilpotent orbits in $\mathfrak{so}_{2n+1}$ (resp. $\mathfrak{sp}_{2n}$) are in one-to-one correspondence with the set of partitions of $2n+1$ (resp. $2n$) in which even parts (resp. odd parts) occur with even multiplicity.	

The set $\cP(N)$ is partially ordered as follows:  $ \mathbf{d} =\left[ d_1, \ldots, d_N  \right] \geq \mathbf{f}= \left[ f_1, \ldots, f_N  \right] $ if and only if $ \sum_{j=1}^k d_j \geq \sum_{j=1}^k f_j, \ \text{for all} \ 1 \leq k \leq N.	$ This induces a partial order on $\mathcal{P}_{\varepsilon}(N)$, which coincides with the partial ordering on nilpotent orbits given by inclusion of closures(cf. \cite[Theorem 6.2.5]{CM93}).

Given a partition $\mathbf{d}=\left[ d_1, \ldots, d_N  \right]$, its dual partition $\mathbf{d}^*$ is defined by $d^*_i = \sharp \left\{ j \ \big{|} \ d_j \geq i \right\} $ for all $i$. We call a partition $\mathbf{d} \in \mathcal{P}_{\varepsilon}(N)$ {\em special} if its dual  partition $\mathbf{d}^*$ lies in $\mathcal{P}_{\varepsilon}(N)$, and the corresponding nilpotent orbits are called special orbits (cf. \cite[Proposition 6.3.7]{CM93}).  We denote by $\mathcal{P}_{\varepsilon}^{sp}(N)$ the set of all special partitions in $\mathcal{P}_{\varepsilon}(N)$.

 Here is a simple criterion for special partitions:  a partition in $\cP_1(2n+1)$  is special if and only if it has an even number of odd parts between any two consecutive even parts and an odd number of odd parts greater than the largest even part, and a partition in $\cP_{-1}(2n)$ is special if and only if it has only even parts or it has an even number of even parts between any two consecutive odd ones and an even number of even parts greater than the largest odd part. We can illustrate these partitions as follows (throughout this paper, $\equiv$ means mod 2):
\begin{align*}
B_n: 	[d_1 \equiv d_2 \equiv \cdots \equiv d_{2l-1} \equiv 1,  d_{2l} \equiv d_{2l+1}\equiv 0, \ldots, d_{2j} \equiv d_{2j+1}, \cdots, d_{2n} \equiv d_{2n+1}].	
\end{align*}
\begin{align*}
C_n: &	[d_1 \equiv d_2 \equiv \cdots \equiv d_{2k} \equiv 0 ,  d_{2k+1} \equiv d_{2k+2} \equiv 1, \ldots , d_{2j-1} \equiv d_{2j}, \cdots,  d_{2n-1}\equiv  d_{2n}], \\
     & \text{or} \ d_i \equiv 0, \forall i.
\end{align*}

\subsection{Springer dual partitions}

For a partition $\mathbf{d}=\left[ d_1,\ldots, d_{2n+1} \right]$ in $\cP(2n+1)$, there is a unique largest partition in $\cP_{1}(2n +1)$ dominated by $\mathbf{d}$. This partition, called the $B$--collapse of $\mathbf{d}$ and denoted by $\mathbf{d}_B$, may be obtained as follows: if $\mathbf{d}$ is not already in $\cP_{1}(2n+ 1)$, then at least one of its even parts must occur with odd multiplicity. Let $q$ be the largest such part. Replace the last occurrence of $q$ in $\mathbf{d}$ by $q-1$ and the first subsequent part $r$ strictly less than $q-1$ by $r+1$. We call this procedure a collapse. Repeat this process until a partition in $\cP_{1}(2n +1)$ is obtained, which is denoted by $\mathbf{d}_B$

Similarly, there is a unique largest partition $\mathbf{d}_C$ in $\cP_{-1}(2n)$ dominated by any given partition $\mathbf{d}$ of $2n$, which is called the $C$--collapse of $\mathbf{d}$. The procedure to obtain $\mathbf{d}_C$ from $\mathbf{d}$ is similar to that of $\mathbf{d}_B$.  	

Recall that  Langlands dual groups $G$ and ${}^LG$ have the same Weyl group $\mathcal{W}$, hence for any special orbit $\0$ in $\g$, there exists a unique special orbit ${}^S\0 \subset {}^L\g$ such that $(\0, 1)$ and $({}^S\0, 1)$ correspond to the same irreducible representation of $\mathcal{W}$ via the Springer correspondence. This gives a bijection between special nilpotent orbits in $\g$ and ${}^L\g$, which is called the Springer dual map. We can now describe this map in terms of partitions.

Given a partition $\mathbf{d} = [d_1, \cdots, d_k]$ (with $d_k \geq 1$). We define $\mathbf{d}^- = [d_1, \cdots, d_k-1]$ and $\mathbf{d}^+ = [d_1+1, \cdots, d_k]$. The following result is from \cite[Chapter III]{Spa82} (see also \cite[Proposition 4.3]{KP}).
\begin{proposition} \label{p.Springerdual}
The map $\mathbf{d} \mapsto (\mathbf{d}^+)_B$ gives an bijection $S: \cP^{sp}_{-1}(2n)\to \cP^{sp}_{1}(2n +1)$, which is the Springer dual map. Its inverse is given by $\mathbf{f} \mapsto (\mathbf{f}^-)_C$.
\end{proposition}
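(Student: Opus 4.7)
The plan is to establish that the two maps are well-defined and mutually inverse, and then identify the resulting bijection with the Springer dual by invoking the characterization of special orbits via Weyl group representations.

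First I would verify well-definedness and describe the forward map explicitly. Given $\mathbf{d}\in \cP_{-1}^{sp}(2n)$ with block structure $d_1 \geq \cdots \geq d_{2k}$ all even followed by consecutive pairs $(d_{2j-1}, d_{2j})$ of constant parity starting with odd, the partition $\mathbf{d}^+$ has the multiplicity of $d_1$ decreased by one and a new entry $d_1+1$ of multiplicity one. Splitting into cases according to the parity of $d_1$ and the equality $d_1 = d_2$ versus $d_1 > d_2$, I would trace the cascade of the $B$--collapse step by step. In each case the collapse stops after finitely many steps within the first maximal block of constant parity, and the resulting partition $(\mathbf{d}^+)_B$ satisfies the $B$--special criterion (odd number of odd parts above the largest even part, even number of odd parts between consecutive even parts) as a direct consequence of the corresponding $C$--special condition for $\mathbf{d}$. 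The same local analysis applied to $\mathbf{f} \mapsto (\mathbf{f}^-)_C$ shows it is well-defined on $\cP_1^{sp}(2n+1)$, and a case check of the composition $\mathbf{d} \mapsto (\mathbf{d}^+)_B \mapsto ((\mathbf{d}^+)_B^-)_C$ shows the two maps are mutually inverse.

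Next I would identify this combinatorial bijection with the Springer dual map. Using Lusztig's framework, special partitions in $\mathfrak{sp}_{2n}$ (resp.\ $\mathfrak{so}_{2n+1}$) correspond to special symbols of defect $0$ (resp.\ $1$), and the Springer correspondence assigns the trivial representation of $A(\cO)$ to the partition read off from the symbol in a prescribed way. Swapping the two rows of a special symbol realizes the Springer dual map at the level of symbols. Translating this row-swap through the standard dictionary between symbols and partitions recovers precisely the operations $\mathbf{d} \mapsto (\mathbf{d}^+)_B$ and $\mathbf{f} \mapsto (\mathbf{f}^-)_C$; this is essentially the content of Spaltenstein's construction in \cite{Spa82}.

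The hard part is managing the combinatorial bookkeeping of the collapse in boundary cases, for instance when $d_1 = d_2$ forces the increment to propagate through the initial even block, or when the last part equals $1$ so a spurious $0$ is created. A conceptual shortcut is to reformulate partitions using Kraft--Procesi ab-diagrams (or equivalently Sommers' displacement diagrams), in which the composite operation $\mathbf{d} \mapsto (\mathbf{d}^+)_B$ becomes a single local move on the top row of the diagram; once this reformulation is in place, both the bijectivity and the identification with the symbol-level row-swap, and hence with $S$, become transparent.
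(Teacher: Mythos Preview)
Your proposal sketches a genuine proof, but the paper does not prove this proposition at all: it is stated as a known result and attributed directly to Spaltenstein \cite[Chapter III]{Spa82} (with a pointer also to \cite[Proposition 4.3]{KP}). So there is no ``paper's own proof'' to compare against; the authors simply quote the statement.

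That said, your outline is broadly sound. The two-step plan---first verifying that $\mathbf{d}\mapsto(\mathbf{d}^+)_B$ and $\mathbf{f}\mapsto(\mathbf{f}^-)_C$ are well-defined and mutually inverse by tracing the collapse through the leading block, then identifying the bijection with the Springer dual via Lusztig symbols---is exactly how the result is established in the cited sources. One small caution: your description of the block structure of a special $C$-partition is slightly off (the initial block need not consist of even parts; rather, the structure is $d_{2j-1}\equiv d_{2j}$ in pairs with the tail all even), so the case analysis you describe would need adjustment. The suggestion to pass to ab-diagrams or symbols to avoid the combinatorial bookkeeping is a good one and is essentially what Kraft--Procesi do. If you were writing this up independently you would want to be more careful with the edge cases you flag, but for the purposes of this paper a citation suffices.
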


It follows that if $\mathbf{d}$ is the partition of a special orbit $\cO$, then the Springer dual nilpotent orbit ${}^S\cO$ in ${}^L\g$ is given by the partition  $S(\mathbf{d})$.  We end this subsection with the following trick for collapses.

\begin{lemma} \label{l.trick}
	Let $\mathbf{d}=[d_1 \equiv  d_2 \equiv \dots \equiv d_{2k} \equiv 1, d_{2k+1}\equiv 0 ] \in \mathcal{P}(2n) $, then
	\begin{align*}
		((\mathbf{d}_C)^+)_B =  [d_1, \dots, d_{2k}, d_{2k+1}+1 ].
	\end{align*}
\end{lemma}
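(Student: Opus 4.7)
My plan is to use the characterization of $\mathbf{d}_B$ as the largest element of $\cP_1$, in the dominance order, dominated by $\mathbf{d}$. Write $\mathbf{e} := \mathbf{d}^+ = [d_1+1, d_2, d_3, \ldots]$ and $\mathbf{f} := [d_1, \ldots, d_{2k+1}, d_{2k+2}+1, d_{2k+3}, \ldots]$; the task is to show $\mathbf{e}_B = \mathbf{f}_B$.

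First I would compare partial sums directly: $\sum_{j \leq i} e_j = \sum_{j \leq i} d_j + 1$ for every $i \geq 1$, whereas $\sum_{j \leq i} f_j = \sum_{j \leq i} d_j$ for $i \leq 2k+1$ and $\sum_{j \leq i} f_j = \sum_{j \leq i} d_j + 1$ for $i \geq 2k+2$. Hence $\mathbf{e} \geq \mathbf{f}$ in dominance, and therefore $\mathbf{f}_B \leq \mathbf{e}_B$ by monotonicity of the $B$-collapse.

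For the reverse, I would show that every $\mathbf{g} \in \cP_1$ with $\mathbf{g} \leq \mathbf{e}$ already satisfies $\mathbf{g} \leq \mathbf{f}$, which reduces to proving $\sum_{j \leq i} g_j \leq \sum_{j \leq i} d_j$ for each $i \leq 2k+1$. Arguing by contradiction, pick the smallest $i_0 \leq 2k+1$ at which $\sum_{j \leq i_0} g_j = \sum_{j \leq i_0} d_j + 1$, forcing $g_{i_0} \geq d_{i_0}+1$. Since $d_{i_0}$ is odd, the value $d_{i_0}+1$ is even, so every occurrence of this value in $\mathbf{g}$ must have even multiplicity by the $\cP_1$ condition. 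A case analysis on whether $g_{i_0} = d_{i_0}+1$ or $g_{i_0} > d_{i_0}+1$, together with the fact that $d_{i_0}+1$ is distinct from each odd $d_j$ with $j \leq 2k+1$, shows that the even-multiplicity requirement forces extra copies of $d_{i_0}+1$ among the parts of $\mathbf{g}$ whose cumulative weight exceeds the partial-sum bound $\sum_{j \leq i} g_j \leq \sum_{j \leq i} e_j$. The base case $i_0 = 1$ is paradigmatic: $g_1 = d_1+1$ forces $g_2 = d_1+1$ by even multiplicity, but then $g_1+g_2 = 2(d_1+1) > (d_1+1)+d_2$ since $d_2 \leq d_1$, a contradiction.

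The main obstacle is organizing the case analysis at general $i_0$: one has to track how the required even multiplicity of $d_{i_0}+1$ propagates into positions $i_0 \pm 1, i_0 \pm 2, \ldots$, and rule out every configuration compatible with $\mathbf{g} \leq \mathbf{e}$. A complementary procedural viewpoint, which I would use as a sanity check and to guide the case analysis, is to simulate the $B$-collapse on $\mathbf{e}$ step by step and observe that the ``extra box'' added at row $1$ propagates downward through the block of odd rows $1, \ldots, 2k+1$ and ultimately lands in row $2k+2$; this makes $\mathbf{e}_B = \mathbf{f}_B$ transparent when $d_1 = \cdots = d_{2k+1}$ (one collapse step sends $\mathbf{e}$ directly to $\mathbf{f}$), while the general case requires more careful bookkeeping on strict inequalities among the $d_j$.
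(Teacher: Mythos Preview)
Your dominance-order strategy is sound in principle---monotonicity of the $B$-collapse gives $\mathbf{f}_B \leq \mathbf{e}_B$ immediately, and the reverse direction really does reduce to showing that no $\mathbf{g}\in\cP_1$ with $\mathbf{g}\leq\mathbf{e}$ can have $\sigma_{i_0}(\mathbf{g})=\sigma_{i_0}(\mathbf{d})+1$ for some $i_0\leq 2k+1$. But you have not actually carried out the case analysis beyond $i_0=1$, and as you yourself note, the general step requires tracking how the even-multiplicity constraint on the value $g_{i_0}$ interacts with the partial-sum bounds at neighbouring indices. That bookkeeping can be done, but it is longer than the lemma deserves.

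The paper takes precisely the ``complementary procedural viewpoint'' you relegate to a sanity check, and that is in fact the whole proof. Since $d_1+1$ is even and $d_1+1>d_2$, it is the largest even part of $\mathbf{e}=\mathbf{d}^+$ and has multiplicity one; the collapse rule replaces it by $d_1$ and adds~$1$ to the first subsequent part strictly smaller, say $d_r$. Because $d_1,\dots,d_{2k+1}$ are all odd while $d_{2k+2}$ is even, one has $r\leq 2k+2$; and if $r\leq 2k+1$ then $d_r+1$ is again even, again strictly between its neighbours (the odd parts $d_{r-1}\geq d_r+2$ and $d_{r+1}\leq d_r$), hence again the unique largest even part. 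Iterating, the extra box walks down to position $2k+2$, at which point the partition is exactly $\mathbf{f}$. Since the collapse procedure is deterministic, continuing from $\mathbf{f}$ yields $\mathbf{e}_B=\mathbf{f}_B$.

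So: your plan is not wrong, but you have the emphasis inverted. Promote the procedural argument to the main line---it is two sentences and needs no case analysis---and drop the dominance argument, or keep it only as a remark.
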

\begin{proof}
Recall that the collapse of type $C$ happens when $d_{2i-1} \neq d_{2i}$ for $1 \leq i \leq k$, and it makes the following changes:
\begin{align*}
	d_{2i-1} \mapsto d_{2i-1} -1, \quad d_{2i} \mapsto d_{2i} +1.
\end{align*}
Let $I$ be the subset of $\{1, \dots, k \}$, such that, for $i \in I$, $d_{2i-1} \neq d_{2i}$. So for $j \in I^{\complement}:= \{1, \dots, k \} \backslash I$, $d_{2j-1}=d_{2j}$. Then
\begin{align*}
	\mathbf{d}_C = [d_1^\prime, \dots, d_{2k}^\prime, d_{2k+1}],
\end{align*}
where
\begin{align*}
	& \{d_{2l-1}^\prime, d_{2l}^\prime \} = \{d_{2l-1}, d_{2l} \}, \quad \text{if} \quad l \in I^\complement,\\
	& \{d_{2l-1}^\prime, d_{2l}^\prime \} = \{d_{2l-1}-1, d_{2l}+1 \}, \quad \text{if} \quad l \in I.
\end{align*}
By definition
\begin{align*}
    (\mathbf{d}_C)^+ = [d_1^\prime+1, d_2^\prime, \dots, d_{2k}^\prime, d_{2k+1}].
\end{align*}
Then the proof is divided into two cases: $1 \in I$ or $1 \in I^\complement$.

\bf{Case 1:} if $1 \in I^\complement$, set $m+1 = {\rm min} \{i | i\in I\}$, then 
\begin{align*}
	(\mathbf{d}_C)^+ = [d_1 + 1, d_2 \equiv \dots \equiv d_{2m} \equiv 1, d_{2m+1}^\prime \equiv 0,  \dots ].
\end{align*}
Note that $d^\prime_{2m+1} = d_{2m+1}-1$ and $d^\prime_{2m+2} = d_{2m+2}+1$.
By the rule of $B$--collapse, we replace $d_1+1$ by $d_1$ and replace the first subsequent part $d_{2i+1}$ strictly less than $d_{1}-1$ by $d_{2i+1}+1$. If $d_{2i+1} \equiv 1 $, then we repeat the above procedure, thus we have
\begin{align*}
	[d_1 + 1, \dots, d_{2m}, d_{2m+1}^\prime,  \dots ]_B =[d_1, \dots, d_{2m}, d_{2m+1}^\prime+1,  \dots ]_B.
\end{align*}
Note that $d_{2m+1}^\prime+1=d_{2m+1}$ and $d_{2m+2}^\prime+1=d_{2m+2}+1$ .
Hence the situation is to show
\begin{align*}
	[d_{2m+1}^\prime+1,  \dots, d_{2k}^\prime, d_{2k+1}]_B = [d_{2m+1},  \dots, d_{2k}, d_{2k+1}+1],
\end{align*}
which is reduced to the second case.

\bf{Case 2:} if $1 \in I$, without loss of generality, suppose
\begin{align*}
	(\mathbf{d}_C)^+ = [d_1^\prime +1, d_2^\prime , \dots, d_{2m+1}^\prime \equiv 0,  \dots ].
\end{align*}
Since $1 \in I$, then 	$d_1^\prime +1=d_1$ and $d_2^\prime = d_2 +1$. Similarly, $B$--collapse replace $d_2+1$ with $d_2$ and replace the first subsequent part $d_{2i+1}^\prime$ strictly less than $d_{2}-1$ by $d_{2i+1}^\prime+1$. If $d_{2i+1}^\prime \equiv 1 $, then $d_{2i+1}^\prime=d_{2i+1}$ and we are back to \bf{Case 1}. If $d_{2i+1}^\prime \equiv 0$, note that $d_{2i+1}^\prime=d_{2i+1}-1$ and $d_{2i+2}^\prime=d_{2i+2}+1$. Then
\begin{align*}
	d_{2i+1}^\prime \mapsto d_{2i+1}^\prime+1 = d_{2i+1}, \quad d_{2i+2}^\prime \mapsto d_{2i+2}^\prime-1=d_{2i+2},
\end{align*}
and again replace the first subsequent part $d_{2j+1}^\prime$ strictly less than $d_{2i+2}-1$ by $d_{2j+1}^\prime+1$. Repeat the above procedures, we have
\begin{align*}
	[d_1^\prime + 1, \dots, d_{2m}^\prime, d_{2m+1}^\prime,  \dots ]_B =[d_1, \dots, d_{2m}, d_{2m+1}^\prime+1,  \dots ]_B.
\end{align*}

Now the situation is to show
\begin{align*}
	[d_{2m+1}^\prime+1,  \dots, d_{2k}^\prime, d_{2k+1}]_B = [d_{2m+1},  \dots, d_{2k}, d_{2k+1}+1],
\end{align*}
which goes back to the first case.  We conclude the proof by induction.
\end{proof}

\subsection{Induced orbits and Richardson orbits}

Let $\g$ be a Lie algebra of type $B_n$ or $C_n$. We denote by $\langle \cdot, \cdot \rangle_{\varepsilon} $, $\varepsilon=1$ (resp. $-1$), the bilinear form on $\bC^N$, $N=2n+1$ (resp. $2n$) with respect to the type of Lie algebra $\g$. Let $\p$ be a parabolic subalgebra of $\g$. There exists an admissible isotropic filtration of $\bC^N$, which is preserved by $\p$:
\begin{align*}
	\{0\}=F_0 \subset F_1 \dots \subset F_{l-1} \subset  F_l \subset F_l^{\bot} \subset F_{l-1}^{\bot} \subset \dots \subset F_1^{\bot} \subset F_0^{\bot} = \bC^N,
\end{align*}
here $F_i^{\bot}$ means the orthogonal complement of $F_i$ with respect to the bilinear form $ \langle \cdot, \cdot \rangle_{\varepsilon} $. Denote by
\begin{align*}
	V_i = F_i/F_{i-1}, \  V_j^\prime= F_{j-1}^{\bot}/F_{j}^{\bot} \quad \text{for} \ 1 \leq i, j \leq l, \quad \text{and} \quad W= F_{l}^{\bot}/F_l.
\end{align*}
The Levi subalgebra $\mathfrak{l}$ of $\p$ preserves the following decomposition:
\begin{align}
\bC^N = V_1 \oplus V_2 \oplus \ldots \oplus V_l \oplus W \oplus V_l^{\prime} \oplus \ldots \oplus V_1^{\prime}, \nonumber
\end{align}
here $ \dim V_i = \dim V_i^{\prime} $ and $W$ is a quadratic space of type $\varepsilon$. Let $ p_i = \dim V_i $ and $ q = \dim W$, then we have
\begin{align}
 \mathfrak{l} \cong \mathfrak{gl}_{p_1} \times \ldots \times \mathfrak{gl}_{p_l} \times \g^{\prime},	\label{levi-type}
\end{align}
where $\g^{\prime}$ has the same type as $\g$. We say $\mathfrak{l}$ is of type $(p_1, \ldots, p_l; q)$. Note that $2\sum_{k=1}^l p_k +q = N $.

Given a nilpotent orbit in $\mathfrak{l}$, say $ \cO_{\vec { \mathbf d } }$ for ${\vec { \mathbf d } }=  \{ \mathbf d_1, \ldots \mathbf d_l ; \mathbf d_0 \} $, where each $\mathbf d_i= \left[ d_{i1}, \ldots, d_{i k_i} \right], \ i=0, \ldots, l$ corresponds to a partition in each component.
Consider the new partitions
\begin{align}
\begin{split}
    \mathbf d^{\prime} &= \left[ d_1^{\prime}, \ldots, d_r^{\prime} \right],	\quad \text{with} \quad d_j^{\prime} = 2 \sum_{i=1}^l d_{ij} + d_{0j}, \label{induction} \\
    \textbf{Even} &= [d_1^{\prime \prime}, \dots, d_l^{\prime \prime}], \quad \text{with} \quad d_j^{\prime \prime} = 2 \sum_{i=1}^l d_{ij}.
\end{split}
\end{align}
Here we use the notation that if the index $j$ exceeds the range of the indexes of $\mathbf d_i$'s, we take $d_{ij}=0$. Let $\mathbf d^{\prime}_\varepsilon$ be the collapse of $\mathbf d^{\prime}$ with respect to Lie algebra type. Then the partition of the induced orbit is given by $\mathbf d^{\prime}_\varepsilon$, i.e.
\begin{align}
 \Ind_{\fl}^{\g} \left( \cO_{\vec { \mathbf d } }   \right)	= \cO_{\mathbf d^{\prime}_{\varepsilon}}, \nonumber
\end{align}

Recall that Richardson orbits are orbits induced from the zero orbit (namely $\cO_{\vec{\mathbf{d}}} = \cO_{\vec{1}} :=0$), hence the partitions  $\mathbf{d}_i, 0 \leq i \leq l $ consist of 1's.  It follows that the new partition $\mathbf{d}^{\prime}$ has the property that there exists a line dividing $\mathbf{d}^{\prime}= \{ d_1^{\prime}, \ldots, d_r^{\prime} \}$ into two parts, the numbers in the left part  are all odd, and the numbers in the right part are all even, i.e.
\begin{align}
\mathbf{d}^{\prime}=[ d_{1}^{\prime} \equiv \ldots \equiv d_{k}^{\prime} \equiv 1 ,  d_{k+1}^{\prime} \equiv \ldots \equiv d_{r}^{\prime} \equiv 0 ].	\nonumber
\end{align}
We call $\mathbf{d}^{\prime}$ the partition induced from the parabolic subgroup. 
Using this observation, one can determine which nilpotent orbits are Richardson. Spaltenstein was the first one who calculated polarizations of Richardson orbits (but unpublished). Hesselink gave a proof and a description in \cite{He78}, and Kempken gave another explanation in \cite{Ke81}. The description of Richardson orbits in $B_n$ or $C_n$ is as follows:

For $B_n$, let $\mathbf{d}= \left[ d_1, \ldots, d_{2r+1} \right] \in \cP_{1}(2n+1) $.  The  orbit $\cO_{\mathbf{d}}$ is Richardson if and only if  $\min_{j \geq 1} \{j : d_j \ \rm{is \ even}  \}  $ is even, say $2l$,  and  for all $ j \geq l $,  we have $d_{2j} \equiv d_{2j+1} $, and if $d_{2j-1}$, $d_{2j}$ are odd, then $d_{2j-1} \geq d_{2j} +2 $.

For $C_n$, let $\mathbf{d}= \left[ d_1, \ldots, d_{2r+1} \right] \in \cP_{-1}(2n) $. The  orbit $\cO_{\mathbf{d}}$ is Richardson if and only if $ \max_{j \geq 1} \{j : d_j \ \rm{is \ odd}  \}  $ is even, say $2k \geq 0$, and if $k \neq 0$ we have for all $ 1 \leq j \leq k $, $d_{2j-1} \equiv d_{2j} $, and if $d_{2j}$, $d_{2j+1}$ are even, then $d_{2j} \geq d_{2j+1} +2 $ 	

We can illustrate these partitions as follows:
 \begin{align*}
B_n: 	[d_1 \equiv d_2 \equiv \cdots \equiv d_{2l-1} \equiv 1 ,  \underbrace{d_{2l} \equiv d_{2l+1}\equiv 0, \ldots, d_{2j} \equiv d_{2j+1}, \cdots, d_{2n+1}}_{\text{if}\  d_{2j-1} \equiv d_{2j} \equiv 1, \text{ then} \  d_{2j-1} \geq d_{2j} +2}].
\end{align*}

  \begin{align*}
C_n: 	[\underbrace{d_1\equiv d_2, \cdots, d_{2k-1} \equiv  d_{2k}\equiv 1}_{\text{if}\  d_{2j} \equiv d_{2j+1} \equiv 0, \text{ then} \  d_{2j} \geq d_{2j+1} +2}, d_{2k+1} \equiv \ldots \equiv  d_{2n}\equiv 0 ].
\end{align*}

\subsection{Rigid special orbits under Springer duality}

Recall that a nilpotent orbit is said rigid if it is not induced from any parabolic subgroup.
The partitions in $\cP_\varepsilon(N)$ corresponding to rigid orbits are given by  $\mathbf{d}=[d_1, \cdots, d_N]$
such that (cf.  \cite[p.117]{CM93})
$$
0 \leq d_{i+1} \leq d_i \leq d_{i+1}+1 \quad \text{and} \quad \sharp\{j|d_j=i\} \neq 2 \quad \text{if} \quad  (-1)^{i+1} = \varepsilon.
$$
In other words, it has the form $[k^{m_k}, (k-1)^{m_{k-1}}, \cdots, 1^{m_1}]$ such that $m_j \geq 1$ and $m_j \neq 2$ when $j$ is odd for $B_n$ (resp. even for $C_n$).

Let $\cN^{rs}$ and ${}^L\cN^{rs}$ be the sets of rigid special orbits in $\mathfrak{sp}_{2n}$ and $\mathfrak{so}_{2n+1}$ respectively. It follows from previous discussions that the partitions of the rigid special orbits in $\mathfrak{so}_{2n+1}$ and $\mathfrak{sp}_{2n}$ have the following forms
		\begin{align*}
		\text{For} \quad  B_n=\mathfrak{so}_{2n+1}:  \mathbf{d}_B= \left[ k^{2n_k-1}, (k-1)^{2n_{k-1}}, \cdots, 2^{2n_2}, 1^{2n_1} \right], 		\end{align*}
		where $k$ is odd and if $i (\neq k)$ is odd then $n_i \geq 2$, otherwise, $n_i \geq 1$.
		\begin{align*}
		\text{For} \quad 	C_n=\mathfrak{sp}_{2n}: \mathbf{d}_C= \left[ k^{2n_k}, (k-1)^{2n_{k-1}}, \cdots, 2^{2n_2}, 1^{2n_1} \right],
		\end{align*}
		we have $ n_{\text{even}} \geq 2$ and $n_{\text{odd}} \geq 1$.

\begin{lemma} \label{rigid-special-under-Springer-dual}
The Springer dual map $S: \cN^{sp} \to {}^L\cN^{sp}$ preserves rigid special orbits, namely
	\begin{align*}
		S(\cN^{rs}) = {}^L\cN^{rs}.
	\end{align*}
\end{lemma}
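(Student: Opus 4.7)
The plan is to unfold the Springer duality $S\colon \mathbf{d}\mapsto(\mathbf{d}^+)_B$ explicitly on each rigid special partition of $C_n$-type and verify that the output is a rigid special partition of $B_n$-type. The reverse inclusion is handled by running the same computation through $S^{-1}\colon \mathbf{f}\mapsto(\mathbf{f}^-)_C$ on $B_n$-rigid special partitions. Since the explicit combinatorial forms of both $C_n$- and $B_n$-rigid special partitions are recorded in the paragraphs just before the lemma, the whole argument reduces to tracking a chain of $B$-collapses.

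Fix a $C_n$-rigid special partition $\mathbf{d}_C=[k^{2n_k},(k-1)^{2n_{k-1}},\ldots,1^{2n_1}]$ with $n_i\geq 2$ for $i$ even and $n_i\geq 1$ for $i$ odd, and form $\mathbf{d}_C^+$ by incrementing the top part. When $k$ is odd, the unique even part of odd multiplicity in $\mathbf{d}_C^+$ is $k+1$ (multiplicity one); the $B$-collapse then converts $k+1$ into $k$ and promotes a $k-1$ into a $k$, which in turn leaves $k-1$ with odd multiplicity. A short induction on the descending even indices shows the collapse cascades downward, yielding
\begin{align*}
S(\mathbf{d}_C)=[k^{2m_k-1},(k-1)^{2m_{k-1}},\ldots,1^{2m_1}],\qquad m_i=n_i+(-1)^{k-i},
\end{align*}
with a trailing zero absorbed at the bottom to correct the parity of the multiplicity of $1$. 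When $k$ is even, the initial collapse targets $k$ itself (of multiplicity $2n_k-1$), and an analogous cascade produces a partition whose new top part $k+1$ is odd of multiplicity one, followed by shifted multiplicities $m_i=n_i+(-1)^{(k+1)-i}$ for $i\leq k$.

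The decisive step is to verify that the shifted multiplicities satisfy the $B_n$-rigid special conditions: the new top part $k'$ is odd, each odd part with index $\neq k'$ has multiplicity $\geq 2$, and each even part has multiplicity $\geq 1$. Because $m_i-n_i\in\{\pm 1\}$ alternates in sign as $i$ decreases, the hypotheses $n_{\text{even}}\geq 2$ and $n_{\text{odd}}\geq 1$ provide exactly the slack needed to keep every $m_i$ inside the $B_n$-rigid region. Running the same bookkeeping in reverse, starting from a $B_n$-rigid special partition, applying $(\cdot)^-$ and a cascade of $C$-collapses, produces a $C_n$-rigid special partition, establishing $S^{-1}({}^L\cN^{rs})\subseteq\cN^{rs}$.

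The main obstacle is the case analysis: the collapse behaves differently according to the parity of $k$, and at the tail one must occasionally pad the partition with a trailing zero in order to apply the collapse rule as stated. These are bookkeeping issues rather than conceptual difficulties, and the explicit alternating-shift formulas above make the verification of the rigid special inequalities immediate in every case.
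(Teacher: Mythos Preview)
Your proof is correct and follows essentially the same approach as the paper's: take the explicit form $\mathbf{d}_C=[k^{2n_k},\ldots,1^{2n_1}]$, compute $(\mathbf{d}_C^+)_B$ by a case split on the parity of $k$, and read off that the result is $B_n$-rigid special. Your alternating-shift formula $m_i=n_i+(-1)^{k-i}$ (resp.\ $(-1)^{(k+1)-i}$) is exactly the paper's explicit output rewritten compactly; the paper records the same answer and summarizes the collapse by the local rule $l^{2n_l}\mapsto (l+1)\,l^{2n_l-2}\,(l-1)$ for $l$ even and $l^{2n_l}\mapsto l^{2n_l}$ for $l$ odd. One small point worth noting: the paper only writes out the forward direction $S(\cN^{rs})\subseteq{}^L\cN^{rs}$, implicitly relying on the visibly invertible shape of the formula, whereas you explicitly run $S^{-1}$ on the $B_n$ side—this makes your argument for the equality slightly more self-contained.
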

\begin{proof}
	Given a rigid special orbit $\cO_{\bf d_C} \subset \mathfrak{sp}_{2n}$, where
	\begin{align*}
			\mathbf{d}_C= \left[ k^{2n_k}, (k-1)^{2n_{k-1}}, \cdots, 2^{2n_2}, 1^{2n_1} \right].
	\end{align*}
	By Proposition \ref{p.Springerdual}, the Springer dual $\cO_{\bf d_B} = {}^S\cO_{\bf d_C}$ is given by the $B$--collapse:
	\begin{align*}
		\mathbf{d}_B= \left[ k+1, k^{2n_k-1}, (k-1)^{2n_{k-1}}, \cdots, 2^{2n_2}, 1^{2n_1} \right]_B.
	\end{align*}
	If $k$ is even, then
	\begin{align*}
		\mathbf{d}_B= \left[k+1, k^{2n_{k}-2}, (k-1)^{2n_{k-1}+2}, \cdots, 2^{2n_2-2}, 1^{2n_1+2} \right].
	\end{align*}
	If $k$ is odd, then
	\begin{align*}
		\mathbf{d}_B= \left[k^{2n_{k}+1}, (k-1)^{2n_{k-1}-2}, \cdots, 2^{2n_2-2}, 1^{2n_1+2} \right].
	\end{align*}
	That is, the $B$--collapse follows the rule
	\begin{align*}
		l^{2n_{l}} \mapsto \begin{cases}(l+1) l^{2n_{l}-2}(l-1), & l \text { even } \\ l^{2n_{l}}, & l \text { odd }\end{cases}.
	\end{align*}
	This shows that $\cO_{\bf d_B}$ is a rigid special orbit of type $B_n$.
\end{proof}

\subsection{Lusztig's canonical quotient} \label{LCQ}

Recall that an orbit $\0_{\mathbf d}$ is said {\em rather odd} if every odd part of $\mathbf{d}$ has multiplicity 1,
and {\em very even} if every even part has even multiplicity.  We define the following two integers for a given partition $\mathbf{d}$:
$$a_{odd}=\sharp \{ \text{distinct odd} \ d_i\}, \quad a_{even}=\sharp \{ \text{distinct non-zero even} \ d_i\}.$$

Following \cite[Corollary 6.1.6]{CM93}), the component group ($A(\cO)=G^{\rm ad}_x/(G^{\rm ad}_x)^{\circ}$ for $x \in \cO$), and the fundamental group ($\pi_1(\cO)= G^{\rm sc}_x/(G^{\rm sc}_x)^{\circ}$ with $G^{\rm sc}$ the universal cover of $G$) of an orbit
$\0$ associated to a  partition $\mathbf{d}=[d_1, \cdots, d_N]$  are determined as follows:
\begin{center}
\begin{tabular}{|c|c|c|}
 \hline
 type &     $A(\0) $  & $\pi_1(\0)$ \\
  \hline
    &    &  $1\to \mathbb{Z}_2 \to \pi_1(\0) \to \mathbb{Z}_2^{a_{odd}-1}\to 1$ if $\0$ is rather odd
  \\ $B_n$  & $\mathbb{Z}_2^{a_{odd}-1}$ & or  \\
  &  &   \   $\mathbb{Z}_2^{a_{odd}-1}$ otherwise\\
  \hline
   &     $\mathbb{Z}_2^{a_{even}}$ for very even orbits  &
   \\
 $C_n$ & or  & $ \mathbb{Z}_2^{a_{even}}$ \\
 & $\mathbb{Z}_2^{a_{even}-1}$ otherwise & \\
  \hline
\end{tabular}
\end{center}

Since we use the notations $G={\rm Sp}_{2n}$ and ${}^LG= {\rm SO}_{2n+1}$, we define
\begin{align*}
	\mathcal{A}(\cO):=  \left\{
\begin{array}{rcl}
& G_x/(G_x)^{\circ}, \quad \text{for} \quad x \in \cO     & \text{in type $C_n$}, \\
& {}^LG_{x}/({}^LG_x)^{\circ}, \quad \text{for} \quad x \in \cO     & \text{in type $B_n$}.
\end{array}
\right.
\end{align*}
Then, in type $C_n$, $\mathcal{A}(\cO)=\pi_1(\cO)$ and in type $B_n$, $\mathcal{A}(\cO)=A(\cO)$. Consider a special nilpotent orbit $\cO_{\mathbf{d}}$ in $B_n$ or $C_n$. We have a better description of $\mathcal{A}(\cO)$ following \cite{So, He78}. Let $\mathbf{d}_{\varepsilon}=[d_1, d_2, \dots, d_{N}]$ be a special partition corresponding to the special orbit $\cO$ of type $B_n$ or $C_n$. Let $\varepsilon=(-1)^\chi$, $\chi = 0, 1$ and $d_j=0$ if $j > N$. Consider the following set:
\begin{align*}
	B(\mathbf{d}_{\varepsilon})= \{ d_j \in \mathbb{N} \mid d_j > d_{j+1}, d_j \notequiv \chi \}.
\end{align*}
Consider the elementary 2-group with basis consisting of elements $x_j$, $j \in \mathbb{N}$. Then $\mathcal{A}(\cO)$ is the subgroup indexed by $B(\mathbf{d}_{\varepsilon})$ \cite[Section 5.3]{He78} :
\begin{itemize}
	\item[--]If $\chi=1$, i.e., $\cO$ is type $C_n$, then $\mathcal{A}(\cO)=\{x= x_{i_1} + x_{i_2} + \dots + x_{i_m} \mid i_j \in B(\mathbf{d}_{\varepsilon}), 1 \leq j \leq m \}$,
	\item[--]If $\chi=0$, i.e., $\cO$ is type $B_n$, then $\mathcal{A}(\cO)=\{ x= x_{i_1} + x_{i_2} + \dots + x_{i_{m}},\ m \equiv 0 \mid i_j \in B(\mathbf{d}_{\varepsilon}), 1 \leq j \leq m  \}$.
\end{itemize}
If there is no misunderstanding, by abuse of notation, we sometimes use $d_i \in \mathbf{d}_{\varepsilon}$ to denote $x_{d_i}$ when we describe $\mathcal{A}(\cO)$.

Being a quotient of $\mathcal{A}(\0_{\mathbf{d}})$,  Lusztig's canonical quotient $\widebar{A} (\cO_{\mathbf{d}})$ is isomorphic to $\mathbb{Z}_2^{q}$ for some $q$. We will give a more detailed explanation of the component group and Lusztig's canonical quotient group in Section \ref{MSPCRO}. Here is the way to compute Lusztig's canonical quotient following \cite{So} (see also \cite[Proposition 2.2]{Won17}).
\begin{proposition}\label{p.CombCanonicalQuotient} \
Lusztig's canonical quotient $\bar{A}(\0)$ can be determined by the following procedure:
	\begin{itemize}
		\item \textbf{\emph{Type} $B_n$:} Let $\mathbf{d}_B=[d_1, \cdots, d_{2r+1}]$ with $d_{2r+1} \geq 1$. We separate all even parts of the form $d_{2j}=d_{2j+1}=\alpha$ and odd parts of the form $d_{2j+1}=d_{2j+2}=\beta $, then we  obtain the remaining $2q+1$ parts:
		\begin{align*}
			\mathbf{d}_B=[d^{\prime \prime}_{1}, \cdots, d^{\prime \prime}_{2q+1}] \cup [\alpha_1, \alpha_1, \cdots, \alpha_x, \alpha_x] \cup [\beta_1, \beta_1, \cdots, \beta_y, \beta_y].
		\end{align*}
		In this case, we have $\widebar{A}(\cO_{\mathbf{d}_B})= \mathbb{Z}_2^{q}$. 
		\item \textbf{\emph{Type} $C_n$:} Let $\mathbf{d}_C=[d_1, \cdots, d_{2r+1}]$, where $d_{2r} \geq 1$ and  $d_{2r+1}$ may be 0. We separate all odd parts of the form $d_{2j+1}=d_{2j+2}=\alpha$ and even parts of the form $d_{2j}=d_{2j+1}=\beta $, then we obtain the remaining $2q+1$ parts:
		\begin{align*}
			\mathbf{d}_C=[d^{\prime \prime}_{1}, \cdots, d^{\prime \prime}_{2q+1}] \cup [\alpha_1, \alpha_1, \cdots, \alpha_x, \alpha_x] \cup [\beta_1, \beta_1, \cdots, \beta_y, \beta_y].
		\end{align*}
		In this case, we have $\widebar{A}(\cO_{\mathbf{d}_C})=\mathbb{Z}_2^{q}$.
	\end{itemize}
\end{proposition}
Let $r_i = \sharp \{j \mid d_j =i \}$ and
\begin{align*}
	S_{\text{odd}} = \{ i \in \mathbb{N} \mid i \notequiv \chi, r_i \equiv 1 \}, \quad
	S_{\text{even}} = \{ i \in \mathbb{N} \mid i \notequiv \chi, r_i \equiv 0 \ \text{and} \ r_i \neq 0 \}.	
\end{align*}

\begin{remark} \label{r.CombCanonicalQuotient}
	From the proof of \cite[Proposition 2.2]{Won17}, the generators of $\bar{A}(\cO)$ are decribed in terms of $[d^{\prime \prime}_{1}, \cdots, d^{\prime \prime}_{2q+1}]$ as follows:
	\begin{itemize}
		\item[--] In type $B_n$, if $d^{\prime \prime}_{2j+1}> d^{\prime \prime}_{2j+2}= d^{\prime \prime}_{2j+3}$, then $d^{\prime \prime}_{2j+3} \in S_{\text{even}}$ contributes a generator of $\bar{A}(\cO)$. If $d^{\prime \prime}_{2j+1}> d^{\prime \prime}_{2j+2} > d^{\prime \prime}_{2j+3}$, then $d^{\prime \prime}_{2j+2}$, $d^{\prime \prime}_{2j+3}$ belong to $S_{\text{odd}}$ and $d^{\prime \prime}_{2j+3}$  can also be chosen as a generator of $\bar{A}(\cO)$.
	    \item[--] In type $C_n$, if $d^{\prime \prime}_{2j+1} = d^{\prime \prime}_{2j+2}> d^{\prime \prime}_{2j+3}$, then $d^{\prime \prime}_{2j+2} \in S_{\text{even}}$ contributes a generator of $\bar{A}(\cO)$. If $d^{\prime \prime}_{2j+1}> d^{\prime \prime}_{2j+2} > d^{\prime \prime}_{2j+3}$, then $d^{\prime \prime}_{2j+1}$, $d^{\prime \prime}_{2j+2}$ belong to $S_{\text{odd}}$ and $d^{\prime \prime}_{2j+2}$ can also be chosen as a generator of $\bar{A}(\cO)$.
	\end{itemize}
\end{remark}

By \cite[Chapter13]{Lu84}, Lusztig's canonical quotient is preserved under the Springer dual map. We provide here a direct proof of this fact based on Proposition  \ref{p.CombCanonicalQuotient}.

\begin{lemma} \label{l.EqualityOfCanonicalQuotient}
	Let $\cO$ be a special orbit in $\mathfrak{sp}_{2n}$ and let ${}^S\cO$ be its Springer dual orbit in $\mathfrak{so}_{2n+1}$, then
	\begin{align*}
		\widebar{A}(\cO) \simeq  \widebar{A}({}^S\cO).
	\end{align*} 	
\end{lemma}
\begin{proof} 
Without loss of generality, we assume the partition of $\cO$ has the following form
\begin{align*}
	\mathbf{d}_C = [ \mathbf{d}_0, \mathbf{d}_1, \cdots, \mathbf{d}_{2r} ],
\end{align*} 
where
\begin{align*}
	& \mathbf{d}_j = [ d_{j,1}, \cdots, d_{j,N_j} ], \quad d_{j,1} \equiv \cdots \equiv d_{j,N_j} \equiv j, \quad \text{for} \quad 0 \leq j \leq 2r-1, \\
	& \mathbf{d}_{2r} = [ d_{2r,1} \equiv \cdots \equiv d_{2r, N_{2r}} \equiv 0]. 
\end{align*}
Since $\cO$ is special, then $N_j \equiv 0$, for $0 \leq j \leq 2r-1$. To obtain $\bar{A}(\cO)$, following the rule of Proposition \ref{p.CombCanonicalQuotient}, one first removes all $\mathbf{d}_{2j-1}$, for $1 \leq j \leq r$, and removes all even parts in $\mathbf{d}_{2j}$, for $0 \leq j \leq r$, of the form $d_{2j, 2l}=d_{2j, 2l+1}$.

By Proposition \ref{p.Springerdual}, the partition of the Springer dual orbit ${}^S\cO$ is given by $\mathbf{d}_B=[(\mathbf{d}_0)^+, \mathbf{d}_1, \cdots, \mathbf{d}_{2r}]_B$. By Lemma \ref{l.trick}, we have
\begin{align*}
	\mathbf{d}_B = [ \tilde{\mathbf d}_{0}, \mathbf{d}_1, \tilde{\mathbf d}_{2}, \cdots, \mathbf{d}_{2r-1}, \tilde{\mathbf d}_{2r}  ],
\end{align*}
where
\begin{align*}
	\tilde{\mathbf d}_{2j} = [d_{2j,1}+1, {\mathbf d}^{\prime}_{2j}, d_{2j, N_{2j}}-1 ], \quad  {\mathbf d}^{\prime}_{2j} = [d_{2j,2}, \cdots, d_{2j,N_{2j}-1} ]_B, \quad \text{for} \quad 0 \leq j \leq r.
\end{align*}
The $B$--collapse happens in ${\mathbf d}^{\prime}_{2j}$, where $ d_{2j, 2l} \neq d_{2j, 2l+1}$. To obtain $\bar{A}({}^S\cO)$, one first removes all even parts in $\tilde{\mathbf{d}}_{2j}$, for $0 \leq j \leq r$. Note that $\mathbf{d}_{2j-1}$ is coming from $\mathbf{d}_{C}$, then we have $d_{2j-1, 2l+1} = d_{2j-1, 2l+2}$, for $0 \leq l \leq N_{2j-1}/2-1 $. Thus, one removes all $\mathbf{d}_{2j-1}$, $1 \leq j \leq r$.

Let's compare the remaining parts in $\mathbf{d}_C$ and $\mathbf{d}_B$, i.e, comparing the number of remaining parts in $\mathbf{d}_{2j}$ and $\tilde{\mathbf d}_{2j}$, $0 \leq j \leq r-1$. Following the rule, the remaining parts in $\mathbf{d}_{2j}$ are pairs of the form $\{d_{2j,2l} \neq d_{2j,2l+1}  \}$ and $\{d_{2j,1}, d_{2j, N_{2j}} \}$. Similarly, the remaining parts in $\tilde{\mathbf d}_{2j}$ consist of pairs of the form $\{ d_{2j,2l}-1,  d_{2j,2l+1}+1 \}$, for $d_{2j,2l} \neq d_{2j,2l+1}$, and $\{d_{2j,1}-1, d_{2j, N_{2j}}+1 \}$. Thus, we have the following decompositions
\begin{align*}
	\mathbf{d}_C &=[d^{\prime \prime}_{1}, \cdots, d^{\prime \prime}_{2q+1}] \cup [ \mathbf{d}_1, \mathbf{d}_3, \cdots, \mathbf{d}_{2r-1}  ] \cup [\beta_1, \beta_1, \cdots, \beta_y, \beta_y], \\
	\mathbf{d}_B &=[\tilde{d^{\prime \prime}}_{1}, \cdots, \tilde{d^{\prime \prime}}_{2q+1}] \cup [\tilde{\alpha}_1, \tilde{\alpha}_1, \cdots, \tilde{\alpha}_x, \tilde{\alpha}_x] \cup [ \mathbf{d}_1, \mathbf{d}_3, \cdots, \mathbf{d}_{2r-1}  ].
\end{align*}
Finally, by Remark \ref{r.CombCanonicalQuotient}, there is a natural isomorphism between the generators of $\widebar{A}(\cO)$ and $\widebar{A}({}^S\cO)$. Then we conclude.
\end{proof}

\begin{proposition}\label{canonical-quotient-rigid-special}
 If $\0_{rs} \subset \g$ is a rigid special orbit, so is its Springer dual ${}^S\0_{rs}$.  Moreover, we have
\begin{align*}
	\pi_1(\0_{rs})=A(\0_{rs}) = \bar{A}(\0_{rs}) \simeq \bar{A}({}^S\0_{rs}) =A({}^S\0_{rs}) = \pi_1({}^S\0_{rs}).
\end{align*}
\end{proposition}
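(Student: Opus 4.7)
The preservation of rigid special orbits under the Springer dual map is precisely Lemma \ref{rigid-special-under-Springer-dual}, and the middle isomorphism $\bar{A}(\0_{rs})\simeq\bar{A}({}^S\0_{rs})$ is Lemma \ref{equiv-canonical-quotient}. What remains is to prove the chain of equalities $\pi_1(\0_{rs})=A(\0_{rs})=\bar{A}(\0_{rs})$ on each of the two sides (one of type $C_n$, the other of type $B_n$). My plan is to read off $A$ and $\pi_1$ from the tabulated formulas applied to the explicit rigid special partition forms, and to compute $\bar{A}$ directly from the combinatorial recipe of Proposition \ref{comb-q}.

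For the $C_n$ side, a rigid special partition $\mathbf{d}_C=[k^{2n_k},(k-1)^{2n_{k-1}},\ldots,1^{2n_1}]$ has all multiplicities even, so the orbit is very even, and the table yields $A(\0_{rs})=\pi_1(\0_{rs})=\mathbb{Z}_2^{\lfloor k/2\rfloor}$. To compute $\bar{A}$, I would pad the partition by a single $0$ (producing odd total length) and use the key observation that, since every block length $2n_i$ is even, each block of value $i$ begins at an odd absolute position. Hence within a block of odd value $i$ all $n_i$ pairs at odd starting positions qualify as odd pairs, while within a block of even value $i$ the $n_i-1$ interior pairs at even starting positions qualify as even pairs; no pair straddling two consecutive blocks can qualify, since a staircase partition forces adjacent blocks to carry distinct values. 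A direct tally leaves $2\lfloor k/2\rfloor+1$ parts, giving $\bar{A}(\0_{rs})=\mathbb{Z}_2^{\lfloor k/2\rfloor}$.

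For the $B_n$ side, a rigid special partition $\mathbf{d}_B=[k^{2n_k-1},(k-1)^{2n_{k-1}},\ldots,1^{2n_1}]$ with $k$ odd is never rather odd except in the trivial case $k=n_1=1$, since every odd part $i<k$ has multiplicity $2n_i\geq 4$; hence $\pi_1(\0_{rs})=A(\0_{rs})=\mathbb{Z}_2^{a_{odd}-1}=\mathbb{Z}_2^{(k-1)/2}$ in all non-trivial cases. The partition already has odd length $2\sum_i n_i-1$, so no padding is needed. The first block (value $k$, length $2n_k-1$) starts at position $1$, while every subsequent block starts at an even position. Separating the $n_k-1$ odd pairs inside the first block, and then in each later block the $n_i$ even pairs when $i$ is even (respectively $n_i-1$ odd pairs when $i<k$ is odd), and again noting that no cross-block pair qualifies, the count shows that exactly $k=2\cdot\tfrac{k-1}{2}+1$ parts survive, so $\bar{A}(\0_{rs})=\mathbb{Z}_2^{(k-1)/2}$, matching $A$ and $\pi_1$.

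The main technical work is the parity bookkeeping in Proposition \ref{comb-q}: correctly identifying the starting parity of each block in the padded partition, and verifying that no pair crossing a block boundary ever contributes (which is automatic because the staircase shape forces consecutive blocks to carry consecutive, hence distinct, integer values). The single rather-odd exception on the $B_n$ side collapses to the zero orbit in $B_0$, where every group is trivial, so it requires no separate treatment; and by Lemma \ref{equiv-canonical-quotient} the two calculations of $\bar{A}$ are automatically consistent with each other.
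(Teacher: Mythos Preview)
Your proof is correct and follows essentially the same approach as the paper: both invoke Lemma \ref{rigid-special-under-Springer-dual} and Lemma \ref{equiv-canonical-quotient} for the first claim and the middle isomorphism, then compute $\bar{A}$ for the explicit rigid special partition via Proposition \ref{comb-q} and compare with the tabulated values of $A$ and $\pi_1$. The only stylistic difference is that the paper writes down the explicit three-part decomposition of $\mathbf{d}_C$ (in the cases $k$ even and $k$ odd separately) and reads off $q=\lfloor k/2\rfloor$, whereas you achieve the same count by systematic parity bookkeeping on block starting positions; your argument is a bit more careful in verifying that no cross-block pair is ever removed, which the paper leaves implicit.
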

\begin{proof}
The first claim is Lemma \ref{rigid-special-under-Springer-dual}. For the second, consider the rigid special orbit $\cO_{rs}$ in $C_n$ with partition
	\begin{align*}
		\mathbf{d}_C=[k^{2n_k}, (k-1)^{2n_{k-1}},\cdots, 2^{2n_2}, 1^{2n_1}], \ \text{with} \ n_{\text{even}} \geq 2 \ \text{and}\ \ n_{\text{odd}} \geq 1.
	\end{align*}
	\begin{itemize}
		\item If $k$ is even, from the above rule, we have
		\begin{align*}
			\mathbf{d}_C= [k^2,(k-2)^2,\cdots, 2^2] & \cup [(k-1)^{2n_{k-1}}, (k-3)^{2n_{k-3}}, \cdots, 1^{2n_1}] \\                        & \cup [k^{2n_k-2}, (k-2)^{2n_{k-2}-2}, \cdots, 2^{2n_2-2}].
		\end{align*}
		\item If $k$ is odd, from the above rule, we have
		\begin{align*}
			\mathbf{d}_C= [(k-1)^2,(k-3)^2,\cdots, 2^2] & \cup [k^{2n_{k}}, (k-2)^{2n_{k-2}}, \cdots, 1^{2n_1}] \\                        & \cup [(k-1)^{2n_{k-1}-2}, \cdots, 2^{2n_2-2}].
		\end{align*}
	\end{itemize}
It follows that $\widebar{A}(\cO_{rs})\simeq \mathbb{Z}_2^{[\frac{k}{2}]}$. Note that $[\frac{k}{2}]$ is the  number of distinct even parts in $\mathbf{d}_C$, hence $\widebar{A}(\cO_{rs}) = A(\cO_{rs}) = \pi_1(\cO_{rs})$. A similar argument applies to rigid special orbits in $B_n$. We conclude the proof by Lemma \ref{l.EqualityOfCanonicalQuotient}.
\end{proof}

\section{Mirror symmetry for parabolic covers of Richardson orbits} \label{MSPCRO}

\subsection{Springer duality and Lusztig's canonical quotient} \label{DSM}

We start with the following result, which says that Springer duality and Langlands duality are compatible under Springer maps.

\begin{proposition} \label{p.Springer-Richardson}
Let $\cO$ be a Richardson orbit of type $C_n$ and ${}^S\cO$ its Springer dual orbit of type $B_n$. Then ${}^S\cO$ is Richardson, and for any polarization $P$ of $\cO$, its Langlands dual ${}^LP$ is a polarization of ${}^S\cO$.
In other words, we have the following relation between Springer duality and Langlands duality:
\begin{align} \label{Springer-dual}
\xymatrix{T^*( G/P )  \ar[d]_{\nu_P} & \stackrel{Langlands \  dual}{\rightsquigarrow } & T^* ( {}^LG / {{}^L P} )  \ar[d]^{\nu_{{}^LP}}  \\   \overline{ \mathcal{O}} &\stackrel{Springer  \ dual}{\rightsquigarrow }  & \overline{{}^S{ \mathcal{O}}} }.
\end{align}
\end{proposition}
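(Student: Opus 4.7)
The plan is to prove Proposition~\ref{p.Springer-Richardson} combinatorially, by showing that the partition $\mathbf{d}_B$ of $\Ind_{{}^L\p}^{{}^L\g}(0)$ coincides with the Springer dual $S(\mathbf{d}_C)$ of the partition $\mathbf{d}_C$ of $\cO_P$. Granting this identity, we simultaneously conclude that ${}^S\cO$ is Richardson, that ${}^LP$ is a polarization of it, and that the diagram \eqref{Springer-dual} commutes. The ingredients are already at hand: the Lusztig--Spaltenstein induction formula \eqref{induction}, the $B$/$C$-collapse rules, and the description $S(\mathbf{d}) = (\mathbf{d}^+)_B$ from Proposition~\ref{p.Springerdual}.

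Write the Levi type of $P$ as $(p_1, \dots, p_l; q)$, so that ${}^LP$ has Levi type $(p_1, \dots, p_l; q+1)$. Inducing the zero orbit on each Levi factor, formula \eqref{induction} yields the pre-collapse partitions
\[
d'_j = 2\,m_j + \mathbf{1}_{\{q \geq j\}}, \qquad {}^Ld'_j = 2\,m_j + \mathbf{1}_{\{q+1 \geq j\}},
\]
with $m_j = \#\{i : p_i \geq j\}$. Three observations follow: $\mathbf{d}'$ has its first $q$ parts odd and the rest even; ${}^L\mathbf{d}'$ has its first $q+1$ parts odd and the rest even; and ${}^L\mathbf{d}'$ is obtained from $\mathbf{d}'$ by incrementing the entry at position $q+1$ by one. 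Taking collapses gives $\mathbf{d}_C = (\mathbf{d}')_C$ and $\mathbf{d}_B = ({}^L\mathbf{d}')_B$, so the identity to be proved reduces to
\[
({}^L\mathbf{d}')_B = ((\mathbf{d}_C)^+)_B.
\]

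The strategy is to show both sides equal $((\mathbf{d}')^+)_B$. For $({}^L\mathbf{d}')_B = ((\mathbf{d}')^+)_B$, I plan to apply a variant of Lemma~\ref{l.trick} suited to partitions of $2n$: since $\mathbf{d}'$ begins with exactly $q$ odd parts followed by even ones, iterating elementary $B$-collapse moves on $(\mathbf{d}')^+$ propagates the extra $+1$ rightward through positions $1, 2, \dots, q$ and deposits it at position $q+1$, producing precisely ${}^L\mathbf{d}'$ (any further $B$-collapses are then common to both sides). For $((\mathbf{d}')^+)_B = ((\mathbf{d}_C)^+)_B$, I will induct on the number of elementary $C$-collapse moves taking $\mathbf{d}'$ to $\mathbf{d}_C$: each such move transfers $1$ from an odd part of odd multiplicity to a later, smaller part, and a case analysis shows that this transfer commutes with the subsequent $+$ and $B$-collapse, leaving the final $B$-partition unchanged.

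The main technical obstacle is the bookkeeping for the propagation/absorption arguments above, particularly when $\mathbf{d}'$ has plateaus in the sequence $(m_j)$ or when the parity boundary at position $q+1$ falls inside a block of equal parts. A conceptual alternative that avoids this combinatorial gymnastics uses the Springer correspondence: by Borho--MacPherson, the Richardson pair $(\cO_P, \mathrm{triv})$ corresponds under Springer to the Macdonald--Lusztig $j$-induction $j^{W}_{W_L}(\mathrm{sgn})$, a Weyl-group representation depending only on the pair $(W, W_L)$ shared by $G$ and ${}^LG$; hence $\cO_P$ and $\cO_{{}^LP}$ carry the same Springer representation and are Springer dual by definition, yielding the proposition at once.
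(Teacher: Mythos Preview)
Your combinatorial argument is correct and follows essentially the same route as the paper: write down the pre-collapse induced partitions $\mathbf{d}'$ and ${}^L\mathbf{d}'$, then use the propagation mechanism from the proof of Lemma~\ref{l.trick} to identify $((\mathbf{d}_C)^+)_B$ with $({}^L\mathbf{d}')_B$. The paper compresses your two steps into one, invoking in addition the observation that the $C$-collapse of the all-odd block $[d'_1,\dots,d'_{2k}]$ acts pairwise (only at positions with $d'_{2j-1}\neq d'_{2j}$), which lets the Lemma~\ref{l.trick} propagation undo the $C$-collapse and the $+1$ simultaneously; your factorization through $((\mathbf{d}')^+)_B$ is an equivalent bookkeeping. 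Your alternative via Borho--MacPherson and $j$-induction is a genuinely cleaner conceptual proof not used in the paper, and it sidesteps all of the plateau/parity casework you flag as the main obstacle.
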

\begin{proof}
Let $\mathbf{d}_C $ (resp. $\mathbf{d}_B$) be the partition of the Richardson orbit $\cO$ (resp. ${}^S\cO$). By Proposition \ref{p.Springerdual}, we know $\mathbf{d}_B=((\mathbf{d}_C)^+)_B$. Denote by
\begin{align}
\mathbf{d}=[ d_{1} \equiv \ldots \equiv d_{2k} \equiv 1 ,  d_{2k+1} \equiv \ldots \equiv d_{2r+1}\equiv 0 ]  , 	\nonumber
\end{align}
the induced partition from $P$ as in (\ref{induction}). Then $\mathbf{d}_C$ is the $C$-collapse of $\mathbf{d}$. Note that the induced partition from ${}^LP$ is given by
\begin{align}
{}^L\mathbf{d}=[ d_{1} \equiv \ldots \equiv d_{2k} \equiv d_{2k+1}+1 \equiv 1 ,  d_{2k+2} \equiv \ldots \equiv d_{2r+1} \equiv 0 ]. 	\nonumber
\end{align}
By Lemma \ref{l.trick}, we know that $((\mathbf{d}_C)^+)_B =({}^L\mathbf{d})_B $, so $\mathbf{d}_B= ({}^L\mathbf{d})_B$. Then we arrive at a conclusion.
\end{proof}

Given a polarization $P$ of the  Richardson orbit $\cO$,  we denote by $\mathcal{A}_P = P_x/G_x^\circ$ its stabilizer in $\mathcal{A}(\cO)$. Consider the following exact sequence defining Lusztig's canonical quotient
\begin{align*}
	1 \longrightarrow K(\0) \longrightarrow \mathcal{A}(\cO) \longrightarrow \bar{A}(\cO) \longrightarrow 1.
\end{align*}

\begin{proposition} \label{p.A_P}
Let $\0$ be a Richardson orbit in classical Lie algebra except type $A$, and $P$ be a polarization. Then the kernel $K(\0) $ is a subgroup of $\mathcal{A}_P$. 
\end{proposition}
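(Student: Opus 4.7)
The plan is to establish the inclusion $K(\0) \subseteq \mathcal{A}_P$ by identifying both subgroups combinatorially inside the elementary abelian $2$-group $\mathcal{A}(\0)$. First, I would fix the standard generators of $\mathcal{A}(\0)$ indexed by distinct even parts of $\mathbf{d}_C$ (in type $C_n$) or distinct odd parts of $\mathbf{d}_B$ (in type $B_n$). By Proposition~\ref{comb-q}, the kernel $K(\0)$ is then generated by the \emph{paired} elements, i.e.\ the generators arising from consecutive repeated entries $d_{2j+1}=d_{2j+2}$ of the appropriate parity that Sommers's procedure removes.

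Next, I would use the polarization $P$ of Levi type $(p_1,\ldots,p_l;q)$ to describe $\mathcal{A}_P$ explicitly. By the induction recipe in Section~2.3, $\mathbf{d}_C$ is the $C$-collapse of the partition $\mathbf{d}'$ obtained from $(p_1,\ldots,p_l;q)$, in which each $\mathfrak{gl}_{p_i}$ factor of $\mathfrak{l}$ contributes a pair of equal entries $(p_i,p_i)$ to $\mathbf{d}'$. The subgroup $\mathcal{A}_P$ is the stabilizer of a point in the generic fiber of $\nu_P:T^*(G/P)\to\overline{\0}$, and any element of $\mathcal{A}(\0)$ realized through the centre of a $\GL_{p_i}$ block of the Levi acts trivially on this fiber (it extends to an automorphism of $T^*(G/P)$ that covers the identity downstairs). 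Hence each ``Levi pairing'' lies in $\mathcal{A}_P$.

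The key step, and the main obstacle, is then to identify the Levi pairings, after the $C$-collapse $\mathbf{d}'\leadsto\mathbf{d}_C$, with the Sommers pairings generating $K(\0)$. A single collapse step only modifies entries near the boundary between the odd and even halves of $\mathbf{d}'$, swapping parity of two adjacent parts in a controlled way. I would argue by induction on the number of collapse steps that (i) each step preserves the subgroup generated by the Levi pairings as a subgroup of $\mathcal{A}_P$, and (ii) at the end of the procedure this subgroup coincides with the subgroup generated by Sommers pairings, namely $K(\0)$. Consistency with the seesaw identity $\deg\mu_P\cdot\deg\mu_{{}^LP}=|\bar{A}(\0)|$ from Theorem~\ref{t.Richardson} serves as a useful sanity check: it forces $|\mathcal{A}(\0)/\mathcal{A}_P|\leq|\bar{A}(\0)|$ and makes the desired set-theoretic inclusion plausible, even though this numerical bound alone is insufficient in an elementary abelian $2$-group.
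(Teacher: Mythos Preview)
Your proposal has a genuine gap in the description of both $K(\0)$ and $\mathcal{A}_P$.

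First, your claim that $K(\0)$ is ``generated by the paired elements, i.e.\ the generators arising from consecutive repeated entries $d_{2j+1}=d_{2j+2}$ of the appropriate parity that Sommers's procedure removes'' is not correct. In type $C_n$ the removed odd pairs $\alpha_i$ do not index elements of $\mathcal{A}(\0)$ at all, and the even pairs $\beta_i$ (which have indices $(2j,2j+1)$, not $(2j+1,2j+2)$) do \emph{not} individually lie in $K(\0)$. Sommers's actual description of $K(\0)$, reproduced just before the paper's proof, is a parity condition mixing the sets $S_{\text{odd}}$ and $S_{\text{even}}$: one requires $t_m=0$ for $m$ even and $\delta_{m+1}+t_m+\delta_m\equiv 0$ for $m$ odd. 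In the paper's own example $\mathbf d_C=[9^2,8,6^2,4,3^2,2^2]$, the element $x_6$ (coming from the removed $\beta$-pair $6,6$) fails this test, whereas $x_4+x_6\in K(\0)$. So your proposed generating set for $K(\0)$ is wrong, and the subsequent ``matching of pairings'' cannot go through.

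Second, your description of $\mathcal{A}_P$ via ``elements realized through the centre of a $\GL_{p_i}$ block'' is not the one that is actually usable here. The paper invokes Hesselink's Theorem~7: $\mathcal{A}_P$ consists of those $x=\sum x_{i_j}$ in which $x_{d_j}$ and $x_{d_{j+1}}$ occur simultaneously for every index $j\in I_\chi(\mathbf d)$, i.e.\ precisely at the positions where an $\varepsilon$-collapse takes place. This is a membership criterion, not a ``generated by Levi pairs'' statement, and it is what makes the proof a two-line case check: for $x\in K(\0)$ and a collapse at $(d_{2a},d_{2a+1})$, either $d_{2a}=d_{2a+1}$ lies strictly between two consecutive $S_{\text{odd}}$ elements $i_{m+1}>i_m$ with $m$ even, so $x_{d_{2a}}$ cannot occur in $x$; or $d_{2a}>d_{2a+1}$ are themselves consecutive elements $i_{m+1},i_m$ of $S_{\text{odd}}$, and then the parity condition $\delta_{m+1}+t_m+\delta_m\equiv 0$ with $t_m=0$ forces $x_{d_{2a}},x_{d_{2a+1}}$ to appear together. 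That is the whole argument. Your induction on collapse steps, besides being only sketched, is trying to track the wrong objects; what is needed is the direct verification of Hesselink's pairing condition against Sommers's parity condition.
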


Before giving the proof, we need to introduce some notations following \cite{So, He78}. Let $\mathbf{d}_{\varepsilon}=[d_1, d_2, \dots, d_{N}]$ be a special partition corresponding to the special orbit $\cO$ of type $B$, $C$ or $D$. Then we have $S_{\text{odd}}$ and $S_{\text{even}}$ defined in Section \ref{LCQ}. List the elements of $S_{\text{odd}}$ in decreasing order $i_l \geq i_{l-1} \geq \cdots \geq i_1$. Note that $l$ is automatically odd in type $B$ and automatically even in type $D$. Assume that $l$ is even in type $C$ by setting $i_1=0$ if necessary.

Given $x \in \mathcal{A}(\cO)$, write $x= x_{m_k} + x_{m_{k-1}} + \dots + x_{m_1}$ where $m_k \geq m_{k-1} \geq \dots \geq m_1$. From $x$ we can construct two sets $T_1$ and $T_2$ with $T_1 \subset S_{\text{odd}}$ and $T_2 \subset S_{\text{even}}$ such that $T_1 \cup T_2 = \{m_k, m_{k-1}, \dots, m_1 \}$. For $i_j \in S_{\text{odd}}$, let $\delta_j = 1$ if $i_j \in T_1$ and $0$ if $i_j \notin T_1$. We assume that $\delta_l=0$ in type $C$ and $D$. Finally, we define subsets $T_2^{(m)}$ of $T_2$ as follows: let $T_2^{(m)}$ consist of those $j \in T_2$ such that $i_{m+1} \geq j \geq i_{m}$ (with convention $i_0 =0$) and let $t_m=|T_2^{(m)}|$. From \cite[Proposition 4, Theorem 6]{So}, we know that the group $K(\0)$ consists of the following elements:
\begin{align*}
	 K(\0)=\{ x \in \mathcal{A}(\cO) \mid t_m=0,\ \text{when} \ m \equiv 0;\ \delta_{m+1}+t_m+\delta_{m} \equiv 0, \ \text{when} \ m \equiv 1 \}.
\end{align*}

\begin{example}
	Consider the nilpotent orbit $\cO$ of type $C$ with partition $\mathbf{d}_C=[9^2,8,6^2,4,3^2,2^2]$. Then $\bar{A}(\0) \simeq \mathbb{Z}_2^{\oplus 2}$ and
	\begin{align*}
		B(\mathbf{d}_{C})= \{8, 6, 4, 2 \}, \quad S_{\text{odd}}=\{i_2 \geq i_1 \} := \{ 8, 4 \},\ \text{and} \ S_{\text{even}}= \{ 6, 2 \}.
	\end{align*}
Let $x=x_8+x_6+x_2 \in \mathcal{A}(\cO)$. We have
	\begin{align*}
		T_1= \{ 8 \}, \ T_2= \{ 6, 2 \}.
	\end{align*}
	Then
	\begin{align*}
		\delta_j = \begin{cases}
			1, \quad \text{if} \quad j=2, \\
			0, \quad \text{if} \quad j=1,
		\end{cases}
	\end{align*}
	and
	\begin{align*}
		T_2^{(0)} = \{ 2 \}, \quad T_2^{(1)} = \{ 6 \}.
	\end{align*}
	As $t_0 \neq 0$, the element  $x$ does not belong to $K(\cO)$.
\end{example}

For a Richardson orbit $\0$ with a given  polarization $P$, let
\begin{align*}
 \mathbf{d}=[d^{\prime}_1, \cdots, d^{\prime}_{2r+1}]
\end{align*}
be the induced partition as (\ref{induction}) of Section 2.3. Then, $\mathbf{d}_\varepsilon $, the $\varepsilon$--collapse of $\mathbf{d}$, is the partition of $\cO$. Set
\begin{align*}
	I_{\chi}(\mathbf{d})=\{ j \in \mathbb{N} \mid j \equiv  d^{\prime}_j \equiv \chi,\  d^{\prime}_j \geq d^{\prime}_{j+1}+2 \}.
\end{align*}
Note that the set $I_{\chi}(\mathbf{d})$ contains the parts where $\varepsilon$--collapses happen. Then $\mathcal{A}_P$ consists of $x=x_{i_k} + \cdots + x_{i_1} \in \mathcal{A}(\cO)$ where $x_{d_j}$ and $x_{d_{j+1}}$ appear simultaneously whenever $j \in I_{\chi}(\mathbf{d})$, for details, see \cite[Theorem 7]{He78}.\\

Now we can prove Proposition \ref{p.A_P}.\\

\begin{proof}
We give the proof for type $B$, which is similar for type $C$ and $D$. In this case
\begin{align*}
    \mathbf{d}=[ d^{\prime}_1 \equiv \ldots \equiv d^{\prime}_{2l-1} \equiv 1,    d^{\prime}_{2l} \equiv \ldots \equiv d^{\prime}_{2r+1}\equiv 0 ],
\end{align*}
the $B$--collapse happens at parts $d^{\prime}_{2a}$  such that for $2l \leq 2a \leq 2r $ and  $d_{2a}^{\prime} \neq d_{2a+1}^{\prime}$, which makes the following changes:
\begin{align*}
	d_{2a}^{\prime} \mapsto d_{2a}:=d_{2a}^{\prime}-1, \quad d_{2a+1}^{\prime} \mapsto d_{2a+1}:=d_{2a+1}^{\prime}+1.
\end{align*}
Note that in type $B$, $S_{\text{odd}}$ consists of some odd parts in $\mathbf{d}_B$.

If $d_{2a}=d_{2a+1}$, note $ \chi=0 \notequiv d_{2a+1} > d_{2a+2}^\prime $. Then $d_{2a+1} \in B(\mathbf{d}_B)$, but $ d_{2a} \notin B(\mathbf{d}_B)$. So if $x=x_{i_k} + \cdots + x_{i_1} \in \mathcal{A}_P$, $x_{d_{2a+1}}$ does not appear in $x$. Moreover, since $[d^{\prime}_1, \ldots, d^{\prime}_{2l-1}]$ consists of an odd number of odd parts and together with the rule of the collapse of type $B$, there exists an even number $m$, such that $i_m, i_{m+1} \in S_{\text{odd}}$ and $ i_{m+1} > d_{2a}=d_{2a+1} > i_{m}$. Then by the definition of $K(\cO)$, $x_{d_{2a+1}}$ does not appear in $x \in  K(\cO)$.

If $d_{2a}>d_{2a+1}$, we claim that if $x \in K(\0)$, then $x_{d_{2a}}, x_{d_{2a+1}}$ appear in $x$ simultaneously. Suppose $d_{2a}=i_{m+1} \in S_{\text{odd}} $, then $d_{2a+1}= i_{m} \in S_{\text{odd}}$. Note that $[d^{\prime}_1, \ldots, d^{\prime}_{2l-1}]$ consists of an odd number of odd parts, then $m \equiv 1$. If only one of $x_{d_{2a}}$ and $x_{d_{2a+1}}$ appears in $x$. Then $\delta_{m+1}+t_{m}+\delta_{m}=1$, which contradicts to $x \in K(\0)$. Thus $K(\0) < \mathcal{A}_P$.
\end{proof}

 Let $P$ be a polarization of a Richardson orbit $\cO$ of type $C$ and denote by
 \begin{align*}
    \mathbf{d}=[ d^{\prime}_1 \equiv \ldots \equiv d^{\prime}_{2k}\equiv 1 ,  \mid  d^{\prime}_{2k+1} \equiv \ldots \equiv d^{\prime}_{2r+1} \equiv 0 ],
\end{align*}
the partition induced from $P$. Then the partition of $\cO$ is given by $\mathbf{d}_C$, the $C$--collapse of $\mathbf{d}$. Let ${}^L\mathbf{d}$ be the partition induced from  ${}^LP$, i.e.,
\begin{align*}
    {}^L\mathbf{d}=[ d^{\prime}_1 \equiv \ldots \equiv d^{\prime}_{2k} \equiv d^{\prime}_{2k+1}+1 \equiv 1,  \mid   d^{\prime}_{2k+2} \equiv \ldots \equiv d^{\prime}_{2r+1}\equiv 0].
\end{align*}
Let
\begin{align*}
	\mathbf{d}^{\prime} :=\{ d^{\prime}_{j} \in \mathbf{d}  \mid j \in I_{\chi=1}(\mathbf{d}) \ \text{or} \ j-1 \in I_{\chi=1}(\mathbf{d})   \}.
\end{align*}
Similarly, we have ${}^L\mathbf{d}^{\prime}$. 

Recall in Proposition \ref{p.CombCanonicalQuotient}, by abuse of notation; we have
\begin{align*}
	\mathbf{d}_{\varepsilon} = [d^{\prime \prime}_{1, \varepsilon}, \cdots, d^{\prime \prime}_{2q+1, \varepsilon}] \cup [\alpha_{1,\varepsilon}, \alpha_{1, \varepsilon}, \cdots, \alpha_{x, \varepsilon}, \alpha_{x, \varepsilon}] \cup [\beta_{1, \varepsilon}, \beta_{1, \varepsilon}, \cdots, \beta_{y, \varepsilon}, \beta_{y, \varepsilon}],
\end{align*}
here
\begin{align*}
	\mathbf{d}_\varepsilon = \begin{cases}
		 ({}^L\mathbf{d})_B, \quad &\text{for} \quad \varepsilon=0, \\
		\mathbf{d}_C, \quad &\text{for} \quad \varepsilon=1.
	\end{cases}
\end{align*}
Let $\mathbf{d}_{q,\varepsilon} := [d^{\prime \prime}_{1, \varepsilon}, \cdots, d^{\prime \prime}_{2q+1, \varepsilon}]$. Using the above notations, we arrive at the following lemma.

\begin{lemma} \label{l.quotient-group}
Let $\mathbf{d}_{q,\chi} := [\mathbf{d}^{\prime} \cup \{d_{2k+1}^\prime+1-\chi  \}  \cup {}^L\mathbf{d}^{\prime}]$, for $\chi=0$ (resp. $\chi=1$) when $\varepsilon=1$ (resp. $\varepsilon=-1$). Then
	\begin{align*}
		\mathbf{d}_{q, \varepsilon}= (\mathbf{d}_{q,\chi})_\varepsilon.
	\end{align*}
	Here $( - )_\varepsilon$ is the $B$(resp. $C$)--collapse when $\varepsilon=1$ (resp. $-1$).
\end{lemma}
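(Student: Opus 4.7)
The plan is a direct combinatorial verification based on the local action of the $\varepsilon$-collapse and the matching procedure in Proposition \ref{comb-q}. First I would unwind the definitions. By Proposition \ref{p.Springer-Richardson}, the induced partition on the $C$-side has the form $\mathbf{d}=[d_1,\ldots,d_{2k}\mid d_{2k+1},\ldots,d_r]$, and the induced partition on the $B$-side is ${}^L\mathbf{d}=[d_1,\ldots,d_{2k},d_{2k+1}+1\mid d_{2k+2},\ldots,d_r]$. A direct parity check on the definition of $I_\chi(\cdot)$ shows that $I_\chi(\mathbf{d})$ indexes precisely the split odd pairs $(d_{2i-1},d_{2i})$ with $d_{2i-1}>d_{2i}$ in the odd block of $\mathbf{d}$, and symmetrically $I_0({}^L\mathbf{d})$ indexes the split even pairs $(d_{2k+2i},d_{2k+2i+1})$ in the even block of ${}^L\mathbf{d}$. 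Consequently $\mathbf{d}'$ collects the values in split odd pairs while ${}^L\mathbf{d}'$ collects the values in split even pairs.

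Next I would establish the locality of the $\varepsilon$-collapse. Running the algorithm for the $C$-collapse on $\mathbf{d}$ converts each split odd pair $(2a+1,2b+1)$ with $a>b$ into the balanced even pair $(2a,2b+2)$, while leaving balanced odd pairs, the transitional part $d_{2k+1}$, and all other even-block parts of $\mathbf{d}$ unchanged. The Richardson constraint, which forces the two parts in each induced pair to have the same parity and differ by an even amount, guarantees that the collapses do not cascade between disjoint split pairs. A symmetric statement holds for the $B$-collapse on ${}^L\mathbf{d}$. Therefore $\mathbf{d}_\varepsilon$ inherits the balanced pairs of $\mathbf{d}$ and its transitional part, plus the levelled images of the split pairs.

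Now I would apply Proposition \ref{comb-q} to $\mathbf{d}_\varepsilon$. The odd-pair matching $(d_{2j+1},d_{2j+2})$ removes exactly the balanced odd pairs inherited from $\mathbf{d}$, and the even-pair matching $(d_{2j},d_{2j+1})$ removes exactly the balanced even pairs inherited from the even block together with the pairs newly created by levelling of split odd pairs. What remains in $\mathbf{d}_{q,\varepsilon}$ is then the multiset of levelled split odd pairs together with the unchanged split even pairs. Finally, applying the $\varepsilon$-collapse to $\mathbf{d}_q=\mathbf{d}'\cup{}^L\mathbf{d}'$ reproduces these same local transformations: split odd pairs in $\mathbf{d}'$ collapse to the same balanced even pairs, and the even values from ${}^L\mathbf{d}'$ are already stable under $C$-collapse. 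Comparing the two sides yields $(\mathbf{d}_q)_\varepsilon=\mathbf{d}_{q,\varepsilon}$.

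The main obstacle is in matching the two distinct pairings of Proposition \ref{comb-q} (odd matching starting at odd indices, even matching starting at even indices) with the odd-even block boundary of $\mathbf{d}_\varepsilon$. The asymmetric treatment of the transitional part $d_{2k+1}$ under Langlands duality requires particular care, since this part belongs to the $C$-side even block but feeds into the $B$-side odd block after the shift by $+1$. One must verify that no part near this boundary is matched or left over incorrectly, so that every remaining value in $\mathbf{d}_{q,\varepsilon}$ is accounted for by exactly one split pair coming from either $\mathbf{d}$ or ${}^L\mathbf{d}$.
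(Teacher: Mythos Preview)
Your approach is essentially the one the paper takes: decompose the induced partition into its odd block $[d'_1,\ldots,d'_{2k}]$ and even block $[d'_{2k+1},\ldots,d'_{2r+1}]$, observe that the $C$-collapse acts locally on the split odd pairs in the odd block (and dually for $B$), and then run the removal rule of Proposition~\ref{comb-q} block by block. The paper's sentence ``the remainings are exactly the parts in $[d'_{2k+1}+1,\ldots,d'_{2r+1}]$ that change under the $B$-collapse'' is precisely your identification of ${}^L\mathbf{d}'$ with the split even pairs, and its removal of ``all the odd parts in $[d'_1,\ldots,d'_{2k}]_\varepsilon$'' is your observation that only the levelled split odd pairs survive from the odd block.

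There is one point you flag but do not settle, and it is a genuine gap in your write-up: the transitional part $d'_{2k+1}$. Your claim ``what remains in $\mathbf{d}_{q,\varepsilon}$ is the multiset of levelled split odd pairs together with the unchanged split even pairs'' cannot be literally correct, because $\mathbf{d}_{q,\varepsilon}$ has $2q+1$ parts while your description yields an even number. In type $C$ the part $d'_{2k+1}$ is even and sits at an odd index, so it is removed by neither the $\alpha$-rule nor the $\beta$-rule and hence always survives in $\mathbf{d}_{q,\varepsilon}$; yet it lies in neither $\mathbf{d}'$ nor ${}^L\mathbf{d}'$, so it is absent from $\mathbf{d}_q$. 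The paper does not resolve this inside the proof either---the role of $d'_{2k+1}$ is recorded only in the Remark immediately following the lemma, and in the application (Proposition~\ref{p.LusztigCanonical}) one checks separately that $d'_{2k+1}$ can never be chosen as a generator of $\bar{A}(\cO)$. So the identity of the lemma should be read up to this single pivot part. You should state this explicitly rather than leave it as an unresolved ``obstacle,'' since otherwise your final comparison equates multisets of different cardinalities.
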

\begin{proof}
We prove for type $C$, i.e., $\chi=1$, as the situation for type $B$ is similar. From the rule of $C$--collapse, we have $\mathbf{d}_C= [d^{\prime}_1 \equiv \ldots \equiv d^{\prime}_{2k} \equiv 1]_C \cup [d^{\prime}_{2k+1} \equiv \ldots \equiv d^{\prime}_{2r+1} \equiv 0]_B$. Denote by
\begin{align*}
	[\tilde{d^{\prime}}_{1}, \ldots, \tilde{d^{\prime}}_{2k}]:=[d^{\prime}_{1}, \ldots, d^{\prime}_{2k}]_C.
\end{align*}
Following the rule in Proposition \ref{p.CombCanonicalQuotient}, to obtain $[d^{\prime \prime}_{1}, \cdots, d^{\prime \prime}_{2q+1}]$ from $\mathbf{d}_C$, we first remove all the odd parts in $[\tilde{d^{\prime}}_{1}, \ldots, \tilde{d^{\prime}}_{2k}]$. Note that the $C$--collapse of $[\tilde{d^{\prime}}_{1}, \ldots, \tilde{d^{\prime}}_{2k}]$ happens at $d_{2j+1}^\prime \neq d_{2j+2}$. Then the remaining even parts in $[\tilde{d^{\prime}}_{1}, \ldots, \tilde{d^{\prime}}_{2k}]$ are all left in $[d^{\prime \prime}_{1}, \cdots, d^{\prime \prime}_{2q+1}]$, and they are the $C$--collapse of $\mathbf{d}^{\prime}$. 

Then we only need to see which even parts left in $[d^{\prime}_{2k+1}, \ldots, d^{\prime}_{2r+1}] $. Following the rule in Proposition \ref{p.CombCanonicalQuotient}, we remove even parts of the form ${d^{\prime}}_{2j}={d^{\prime}}_{2j+1}$. Note that the $B$--collapse of $[d^{\prime}_{2k+1}+1, \ldots, d^{\prime}_{2r+1}]$, which happens at $d^{\prime}_{2j} \neq d^{\prime}_{2j+1}$. Then the remaining even parts are $ \{ d_{2k+1}^\prime \} \cup {}^L\mathbf{d}^\prime$. We conclude.
\end{proof}


Let $\cO$ be a Richardson orbit of type $B$ or $C$, and let  $P$ be a polarization of $\0$. Let $\bar{A}_P := \mathcal{A}(\cO)/ \mathcal{A}_P $, from Proposition \ref{p.A_P}, we have a surjective map $\bar{A}(\0) \to  \bar{A}_P$. Then we can write $\mathcal{A}(\cO) \cong \mathcal{A}_P \times \bar{A}_P $. We know that ${}^LP$ is a polarization of ${}^S \cO$ and $\mathcal{A}({}^S\cO)=\mathcal{A}_{{}^LP} \times \bar{A}_{{}^LP}$.

\begin{proposition} \label{p.LusztigCanonical}
	Under the isomorphism $\bar{A}(\cO)\simeq \bar{A}({}^S\cO)$, we have
	\begin{align*}
		\bar{A}(\cO) \cong \bar{A}_P \times \bar{A}_{{}^LP}.
	\end{align*}
\end{proposition}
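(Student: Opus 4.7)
The plan is to combine the two natural surjections onto $\bar A_P$ and $\bar A_{{}^LP}$ into a single map $\phi$, then verify $\phi$ is an isomorphism by an order count together with injectivity. Using Proposition \ref{p.A_P} ($K(\cO) \subseteq \mathcal{A}_P$), one has the well-defined surjection $\pi_P : \bar A(\cO) = \mathcal{A}(\cO)/K(\cO) \twoheadrightarrow \mathcal{A}(\cO)/\mathcal{A}_P = \bar A_P$, and dually $\pi_{{}^LP}$; composing with the isomorphism $\iota : \bar A(\cO) \xrightarrow{\sim} \bar A({}^S\cO)$ of Lemma \ref{equiv-canonical-quotient} gives
$$\phi := (\pi_P,\, \pi_{{}^LP} \circ \iota) : \bar A(\cO) \longrightarrow \bar A_P \times \bar A_{{}^LP}.$$
Since all three groups are elementary abelian $2$-groups, it suffices to show $|\bar A(\cO)| = |\bar A_P| \cdot |\bar A_{{}^LP}|$ and that $\phi$ is injective.

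For the count, the Hesselink description of $\mathcal{A}_P$ recalled in the proof of Proposition \ref{p.A_P} attaches one independent linear constraint to each $j \in I_{\chi}(\mathbf{d})$, giving $|\bar A_P| = 2^{|I_{\chi}(\mathbf{d})|}$ and, symmetrically, $|\bar A_{{}^LP}| = 2^{|I_{1-\chi}({}^L\mathbf{d})|}$. On the other hand, $|\bar A(\cO)| = 2^{q}$ with $2q+1 = |\mathbf{d}_{q,\varepsilon}|$ by Proposition \ref{comb-q}. I would then invoke Lemma \ref{l.quotient-group} together with its subsequent remark, identifying $|\mathbf{d}_{q,\varepsilon}|$ as $|\mathbf{d}^{\prime}| + |{}^L\mathbf{d}^{\prime}| + 1$, where the extra $+1$ accounts for the dividing part $d^{\prime}_{2k+1}$ (or $d^{\prime}_{2k+1}+1$). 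Since consecutive elements of $I_\chi(\mathbf{d})$ are separated by at least two in index, each contributes the two distinct parts $d^{\prime}_j, d^{\prime}_{j+1}$ disjointly to $\mathbf{d}^{\prime}$; hence $|\mathbf{d}^{\prime}| = 2|I_\chi(\mathbf{d})|$ and likewise $|{}^L\mathbf{d}^{\prime}| = 2|I_{1-\chi}({}^L\mathbf{d})|$, yielding $q = |I_\chi(\mathbf{d})| + |I_{1-\chi}({}^L\mathbf{d})|$ and the product identity.

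Injectivity is the main obstacle. My plan is to fix a Hesselink splitting $\mathcal{A}(\cO) = \mathcal{A}_P \oplus \bar A_P$ (as elementary abelian $2$-groups), descend to $\bar A(\cO) \cong (\mathcal{A}_P/K(\cO)) \oplus \bar A_P$, and then show that the restriction of $\pi_{{}^LP} \circ \iota$ to the first summand $\mathcal{A}_P/K(\cO)$ is injective; combined with the cardinality match $|\mathcal{A}_P/K(\cO)| = |\bar A(\cO)|/|\bar A_P| = |\bar A_{{}^LP}|$ derived above, this forces it to be an isomorphism onto $\bar A_{{}^LP}$, and $\phi$ becomes a direct sum of isomorphisms on each summand. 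The hard step is tracking, through the Sommers parametrization of both $K(\cO)$ and $K({}^S\cO)$, how the Hesselink generators corresponding to collapse pairs $(d^{\prime}_j, d^{\prime}_{j+1})$ for $j \in I_\chi(\mathbf{d})$ transform under $\iota$. Lemma \ref{l.quotient-group} should supply the needed matching: the parts of $\mathbf{d}^{\prime}$ (respectively ${}^L\mathbf{d}^{\prime}$) appear as distinguished blocks of $\mathbf{d}_{q,\varepsilon} = (\mathbf{d}_q)_\varepsilon$ and, via Sommers' description of the Lusztig quotient, generate the $\bar A_{{}^LP}$-direction (respectively the $\bar A_P$-direction) in the common quotient. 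Verifying this matching case-by-case for types $B_n$ and $C_n$ should complete the proof.
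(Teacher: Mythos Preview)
Your framework---constructing $\phi = (\pi_P, \pi_{{}^LP}\circ\iota)$ and verifying it is bijective via a cardinality count plus injectivity---is formally different from the paper's, which works entirely on the level of explicit generators: using Wong's description of generators of $\bar A(\cO)$ indexed by positions in $\mathbf{d}_{q,\varepsilon}$, the paper observes directly that these generators lie either strictly to the left or strictly to the right of the dividing part $d^{\prime}_{2k+1}$, and identifies the two halves with $\bar A_P$ and $\bar A_{{}^LP}$ via Hesselink's description of $\mathcal A_P$.

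However, your proposal does not actually avoid this combinatorial core. Your cardinality argument is fine (and essentially recovers what the paper later states as Corollary~\ref{c.DegreeDuality}), but injectivity is the entire content of the proposition, and you defer it: ``Verifying this matching case-by-case \ldots\ should complete the proof.'' Concretely, showing that $\pi_{{}^LP}\circ\iota$ is injective on $\mathcal A_P/K(\cO)$ requires knowing exactly how the Hesselink constraints from $I_\chi(\mathbf d)$ sit relative to those from $I_{1-\chi}({}^L\mathbf d)$ inside the common quotient---i.e., that the collapse pairs on the $C$-side and on the $B$-side occupy \emph{disjoint} blocks of $\mathbf d_{q,\varepsilon}$ separated by $d^{\prime}_{2k+1}$. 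That disjointness is precisely the observation the paper makes and proves in one paragraph; your sketch names it but does not carry it out. There is also a minor wrinkle in your cardinality step: Lemma~\ref{l.quotient-group} as stated gives $\mathbf d_{q,\varepsilon} = (\mathbf d^{\prime} \cup {}^L\mathbf d^{\prime})_\varepsilon$, which on its face has $2|I_\chi|+2|I_{1-\chi}|$ parts, not $2q+1$; the extra part $d^{\prime}_{2k+1}$ you invoke from the remark is not literally in $\mathbf d^{\prime}\cup{}^L\mathbf d^{\prime}$ under the given definitions, so reconciling this requires exactly the position-by-position analysis you were hoping to bypass.

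In short: the strategy is sound, but the injectivity step is where the proof lives, and once you fill it in you will have reproduced the paper's argument in slightly different packaging.
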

\begin{proof}
	Using the isomorphism between $\bar{A}(\cO)$ with $\bar{A}({}^S\cO)$ and we can use elements in $\mathbf{d}_{q,\chi=1}$ to describe the generators. Suppose $\cO$ is of type $C$. From Lemma \ref{l.quotient-group}, we know $\mathbf{d}_{q,\chi=1}$ consists of the following even pairs:  
	\begin{itemize}
		\item[(i)] even pairs $\{d_{2j+1}^{\prime}-1, d_{2j+2}^{\prime}+1 \}$ in $[d^{\prime}_1 \equiv \ldots \equiv d^{\prime}_{2k}  \equiv 1]_C$ such that $d_{2j+1}^{\prime} \neq d_{2j+2}^{\prime}$
		\item[(ii)] even pairs $\{d_{2l}^{\prime}, d_{2l+1}^{\prime} \}$ in $[d^{\prime}_{2k+1} \equiv \ldots \equiv d^{\prime}_{2r+1} \equiv 0]$ such that $d_{2l}^{\prime} \neq d_{2l+1}^{\prime}$,
		\item[(iii)] $\{d_{2k+1}^{\prime} \}$.
	\end{itemize}
	From Remark \ref{r.CombCanonicalQuotient}, $d_{2j+2}^{\prime}+1$ in even pair $\{d_{2j+1}^{\prime}-1, d_{2j+2}^{\prime}+1 \}$, for $ j \leq k-1$, contributes to a generator of $\bar{A}(\cO)$, and $d_{2l}^{\prime}$ in even pair $\{d_{2l}^{\prime}, d_{2l+1}^{\prime} \}$, for $ l > k $, contributes to a generator of $\bar{A}(\cO)$. Note that $d_{2k+1}^{\prime}$ is not chosen as a generator. 
	
	The description of $\mathcal{A}_P$ shows that $\bar{A}_P$ consists of $\{ d_{2j+2}^{\prime}+1 \}$, for $j \leq k-1$, such that $ d_{2j+1}^{\prime} \neq d_{2j+2}^{\prime} $. Similarly, $\bar{A}_{{}^LP}$ consists of $\{ d_{2l}^{\prime} \}$, for $l > k $, such that $d_{2l}^{\prime} \neq d_{2l+1}^{\prime}$. Then we arrive at a conclusion.
\end{proof}

Using notations in Proposition \ref{p.Springer-Richardson}, together with the above proposition, we have
\begin{corollary} \label{c.DegreeDuality}
	\begin{align*}
		\deg \nu_P \cdot \deg \nu_{{}^LP} = |\bar{A}(\cO)| = |\bar{A}({}^S\cO)|.
	\end{align*}
\end{corollary}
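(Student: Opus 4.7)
The plan is to combine three ingredients already established in the excerpt: (a) an interpretation of $\deg \nu_P$ as the coset index $[\mathcal{A}(\cO):\mathcal{A}_P] = |\bar A_P|$, (b) the product decomposition of Lusztig's canonical quotient proven in Proposition \ref{p.LusztigCanonical}, and (c) the Springer-duality invariance of $|\bar A(\cO)|$ recorded in Lemma \ref{equiv-canonical-quotient}.

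First I would record step (a). For a Richardson orbit with polarization $P$, the subgroup $\mathcal{A}_P \subset \mathcal{A}(\cO)$ is by construction the stabilizer in the component group of an irreducible component of the Springer fiber of $\nu_P$ lying over a generic $x \in \cO$; the set of such components forms a single $\mathcal{A}(\cO)$-orbit, and so the generic fiber of the Springer map has cardinality $[\mathcal{A}(\cO):\mathcal{A}_P] = |\bar A_P|$. Equivalently, the Stein factorization $T^*(G/P) \to X_P \to \overline{\cO}$ has birational first factor and second factor $\mu_P$ an \'etale cover of $\cO$ of degree $|\bar A_P|$. Applying this same reasoning on the Langlands dual side — which is legitimate by Proposition \ref{p.Springer-Richardson}, guaranteeing that ${}^LP$ is a polarization of ${}^S\cO$ — yields $\deg \nu_{{}^LP} = |\bar A_{{}^LP}|$.

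The rest is assembly. Multiplying the two degrees gives
$$\deg \nu_P \cdot \deg \nu_{{}^LP} \;=\; |\bar A_P|\cdot |\bar A_{{}^LP}|,$$
and Proposition \ref{p.LusztigCanonical} identifies the right-hand side with $|\bar A(\cO)|$. Finally Lemma \ref{equiv-canonical-quotient} supplies $|\bar A(\cO)| = |\bar A({}^S\cO)|$, giving both equalities in the corollary at once.

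No serious obstacle arises; the statement is essentially a repackaging of Proposition \ref{p.LusztigCanonical}. The one point that deserves attention is step (a), which should be explicitly justified rather than invoked in passing — the identification $\deg \nu_P = |\bar A_P|$ is precisely what makes the product decomposition of $\bar A(\cO)$ translate into the desired degree formula, and it is what encodes the asserted ``seesaw property'' of the pair $(\nu_P,\nu_{{}^LP})$.
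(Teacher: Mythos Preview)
Your proposal is correct and follows essentially the same route as the paper's proof: both identify $\deg \nu_P$ with the index $[\mathcal{A}(\cO):\mathcal{A}_P]=|\bar A_P|$ (the paper does this via $\deg\nu_P=|G_x/P_x|$ and $\mathcal{A}_P=P_x/G_x^\circ$, which is equivalent to your Springer-fiber description), then invoke Proposition~\ref{p.LusztigCanonical} for the product decomposition and Lemma~\ref{equiv-canonical-quotient} for the second equality. Your write-up is in fact slightly more explicit than the paper's in citing all three ingredients.
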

\begin{proof}
	Let $x \in \cO$, then the degree of map $\nu_P$ in (\ref{Springer-dual}) is $|G_x/P_x|$. By definition, $\mathcal{A}(\cO)= G_x/G_x^\circ$ and $\mathcal{A}_P = P_x/G_x^\circ$, which gives  $\deg \nu_P =|\mathcal{A}(\0)/\mathcal{A}_P| =  |\bar{A}_P|$. This concludes the proof by Proposition \ref{p.LusztigCanonical}.
\end{proof}

By Corollary \ref{c.DegreeDuality}, if a Richardson orbit $\0$ of type $B$ or $C$ satisfies $\bar{A}(\0)=1$, then $\overline{\0}$ admits a crepant resolution. In the following, we investigate the relationship between the existence of a crepant resolution of $\overline{\0}$ and Lusztig's canonical quotient for special orbits.

\begin{proposition}
	Let $\cO_{\mathbf{d}}$ be a special nilpotent orbit in a simple classical Lie algebra. If $\bar{A}(\cO)=1$, then ${\cO}_{\mathbf{d}}$ is Richardson and $\overline{\cO}_{\mathbf{d}}$ has a crepant resolution.\end{proposition}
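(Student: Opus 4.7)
The strategy is to combine the combinatorial description of Lusztig's canonical quotient in Proposition~\ref{comb-q} with the degree-duality identity of Corollary~\ref{c.DegreeDuality}. The plan is first to show that $\bar{A}(\cO_{\mathbf{d}}) = 1$ forces $\cO_{\mathbf{d}}$ to be Richardson, and then to extract a polarization whose Springer map is a symplectic (hence crepant) resolution.

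In type $A$ there is nothing to prove: every orbit is Richardson, the Springer map from the cotangent bundle of the flag variety of Jordan type $\mathbf{d}$ is already a symplectic resolution, and $\bar{A}$ is automatically trivial. For types $B_n$ and $C_n$ (and analogously $D_n$), the hypothesis $\bar{A}(\cO_{\mathbf{d}}) = 1$ unwinds via Proposition~\ref{comb-q} into $2q+1=1$: the pair-separation procedure applied to $\mathbf{d}$ leaves exactly one residual entry. I would match this against the Hesselink--Kempken Richardson criterion recalled in Section~2.3. In type $C_n$, Richardness asks that (i) all odd parts appear in consecutive equal pairs at odd-start positions $(2j-1,2j)$ so that $\max\{j : d_j \text{ odd}\}$ is even, and (ii) any consecutive equal even pair $(d_{2j}, d_{2j+1}) = (\beta,\beta)$ at an even-start position is ruled out by the strict inequality $d_{2j} \geq d_{2j+1} + 2$. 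A violation of either (i) or (ii) would furnish exactly the kind of pair that Proposition~\ref{comb-q} strips off, contributing to $q$; since $q=0$, no such violation occurs. The type $B_n$ analysis is dual, and type $D_n$ is parallel.

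Once $\cO_{\mathbf{d}}$ is known to be Richardson, I pick any polarization $P$. Corollary~\ref{c.DegreeDuality} then yields
\[
\deg \nu_P \cdot \deg \nu_{{}^L P} \;=\; |\bar{A}(\cO_{\mathbf{d}})| \;=\; 1,
\]
so $\deg \nu_P = 1$. Hence $\nu_P \colon T^*(G/P) \to \overline{\cO}_{\mathbf{d}}$ is a projective birational morphism from a smooth holomorphic symplectic variety, i.e.\ a symplectic resolution of $\overline{\cO}_{\mathbf{d}}$, which is in particular crepant. (In type $D$ one invokes the evident Langlands-self-dual analogue of the degree-duality identity.)

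The main obstacle is the Richardness step: carefully translating the pair-separation condition $2q+1=1$ into the Hesselink--Kempken criterion. The bookkeeping is most delicate at the interface between the ``even block'' $d_1,\dots,d_{2k}$ at the head of a special type $C$ partition and the parity-paired tail from position $2k+1$ onward, where a misplaced equal pair could simultaneously destroy Richardness and contribute to $q$. Once Richardness is in hand, the degree-duality corollary delivers the crepant resolution essentially for free.
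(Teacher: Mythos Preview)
Your approach in types $A$, $B_n$, and $C_n$ is essentially the paper's own: both reduce to Proposition~\ref{comb-q}, read off the Richardson form from $q=0$, and then invoke Corollary~\ref{c.DegreeDuality} (the paper does this implicitly via the sentence immediately preceding the proposition) to force $\deg\nu_P=1$.

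The gap is in type $D_n$. Corollary~\ref{c.DegreeDuality} is proved in the paper \emph{only} for the $C_n\leftrightarrow B_n$ duality; there is no ``evident Langlands-self-dual analogue'' available. In fact the naive self-dual statement $(\deg\nu_P)^2=|\bar A(\cO)|$ cannot hold in general, since $|\bar A(\cO)|=2^q$ need not be a perfect square, so you cannot simply assert it and specialize to $q=0$. The paper sidesteps this entirely: unwinding $q=0$ in type $D_n$ via Proposition~\ref{comb-q} (and \cite[Proposition~2.2]{Won17}) forces the partition to be very even, i.e.\ all parts even with even multiplicity. Such an orbit is an \emph{even} orbit, and its Jacobson--Morosov Springer map is already birational, giving the crepant resolution directly without any appeal to degree duality. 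Your Richardness analysis in type $D$ would in fact produce exactly this very-even form; you just need to recognize it and cite the standard fact about even orbits rather than reaching for an unproven duality identity.
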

\begin{proof}
The case of type $A$ is trivial, as $\bar{A}(\0)=1$ for all $\0$, and each nilpotent orbit closure admits a crepant resolution. Now we do a case-by-case check for other classical types:

{\textbf{Type $B_n$:}} From Proposition \ref{p.CombCanonicalQuotient}, $\bar{A}(\cO_{\mathbf{d}})=1$ implies $q=0$. From the rule of separating even and odd parts in the partition $\mathbf{d}$, there is only one odd part remained, i.e., partition $\mathbf{d}$ has the form
		\begin{align*}
			\mathbf{d}_B=[d_{1}^{2i_1} \equiv \cdots \equiv d_{j}^{2i_j} \equiv d_{j+1}\equiv 1,  d_{j+2}^{2i_{j+2}} \equiv \cdots \equiv d_{s}^{2i_s}\equiv 0].
		\end{align*}
	It follows that $\0$ is a Richardson orbit with $\bar{A}(\0)=1$, hence  $\overline{\cO}_{\mathbf{d}}$  has a crepant resolution.

		{\textbf{Type $C_n$:}} Similarly, in this case, partition $\mathbf{d}$ has the form
		\begin{align*}
			\mathbf{d}_C=[d_{1}^{2i_1} \equiv \cdots \equiv d_{j}^{2i_j}\equiv 1,  d_{j+1} \equiv d_{j+2}^{2i_{j+2}} \equiv \cdots \equiv d_{s}^{2i_s}\equiv 0],
		\end{align*}
		or
		\begin{align*}
			\mathbf{d}_C=[ d_1 \equiv d_2^{2i_2} \equiv \cdots \equiv d_s^{2i_s}\equiv 0].
		\end{align*}
		Then ${\cO}_{\mathbf{d}}$ is Richardson, and $\overline{\cO}_{\mathbf{d}}$ has a crepant resolution.
		
		{\textbf{Type $D_n$:}} Special partitions in $\mathcal{P}_{1}(2n)$ are similar to special partitions in $\mathcal{P}_{1}(2n+1)$,  except that we require there is an even number of odd parts greater than the largest even part. For the combinatorial description of Lusztig's canonical quotient, see  \cite[Proposition 2.2]{Won17}. It follows that if $\bar{A}(\0)=1$, then the  partition $\mathbf{d}$ has the form
		\begin{align*}
			\mathbf{d}_D=[ d_1^{2i_1} \equiv d_2^{2i_2} \equiv \cdots \equiv d_s^{2i_s}\equiv 0].
		\end{align*}
		Then ${\cO}_{\mathbf{d}}$ is an even orbit, so $\overline{\cO}_{\mathbf{d}}$ has  a crepant resolution.	
\end{proof}

\begin{proposition}
	Let $\cO$ be a Richardson orbit in a simple Lie algebra. If $\bar{A}(\cO)=1$, then $\overline{\cO}$ has a crepant resolution.
\end{proposition}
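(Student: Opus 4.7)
The plan is to proceed along the same lines as the previous proposition but to extract only what is truly needed and formulate it type-uniformly. For any Richardson orbit $\cO$ with polarization $P$, the Springer map $\nu_P: T^*(G/P) \to \overline{\cO}$ has degree $|\mathcal{A}(\cO)/\mathcal{A}_P|$, as in the proof of Corollary \ref{c.DegreeDuality}. When this degree equals $1$, $\nu_P$ is a proper birational morphism from a smooth holomorphic symplectic manifold onto $\overline{\cO}$, hence a symplectic, and in particular crepant, resolution. So the whole task is to exhibit a polarization $P$ with $\mathcal{A}_P = \mathcal{A}(\cO)$.

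The key input is an extension of Proposition \ref{p.A_P} to arbitrary simple Lie algebras: for every Richardson $\cO$ and every polarization $P$, the kernel $K(\cO)$ of the quotient $\mathcal{A}(\cO) \twoheadrightarrow \bar{A}(\cO)$ is contained in $\mathcal{A}_P$. Granting this inclusion, the hypothesis $\bar{A}(\cO)=1$ collapses the chain $K(\cO) \subset \mathcal{A}_P \subset \mathcal{A}(\cO) = K(\cO)$ to equalities, so $\mathcal{A}_P = \mathcal{A}(\cO)$ and $\deg \nu_P = 1$ for every polarization. The resulting $\nu_P: T^*(G/P) \to \overline{\cO}$ is then the desired crepant resolution. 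In classical types nothing is new, since the previous proposition already covers all special orbits (and Richardson orbits are always special), and in type $A$ the statement is trivial because $\bar{A}(\cO)$ is automatically trivial; so the real content is the exceptional types $G_2, F_4, E_6, E_7, E_8$.

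The main obstacle is precisely this type-uniform inclusion $K(\cO) \subset \mathcal{A}_P$, because the proof of Proposition \ref{p.A_P} relied on matching the Hesselink--Kempken partition description of $\mathcal{A}_P$ in \cite{He78} against the combinatorial description of $K(\cO)$ from \cite{So}, neither of which transfers verbatim outside the classical types. The clean conceptual route is to use Sommers' realization of $K(\cO)$ via pseudo-Levi subalgebras, which is manifestly compatible with Richardson induction and therefore yields $K(\cO) \subset \mathcal{A}_P$ uniformly. A more down-to-earth alternative is to close the exceptional cases by direct inspection: the list of Richardson orbits is finite in each exceptional type, and those with $\bar{A}(\cO)=1$ can be checked against the tables in Fu \cite{Fu1} to admit a birational Springer map, which gives the crepant resolution directly without any reference to $\mathcal{A}_P$ at all.
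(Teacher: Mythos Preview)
Your second, ``down-to-earth'' alternative is exactly the paper's proof: by \cite[Corollary 5.11]{Fu4} (not \cite{Fu1}), the only Richardson orbits in exceptional types whose closures lack a crepant resolution are $A_4+A_1$ and $D_5(a_1)$ in $E_7$, and $E_6(a_1)+A_1$ and $E_7(a_3)$ in $E_8$; each of these has $\bar{A}(\cO)=\mathbb{Z}_2$. Together with the previous proposition for classical types this finishes the argument.

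Your primary ``conceptual'' route, on the other hand, has a genuine gap. The claim that Sommers' pseudo-Levi realization of $K(\cO)$ is ``manifestly compatible with Richardson induction and therefore yields $K(\cO)\subset\mathcal{A}_P$ uniformly'' is asserted, not proved. The paper's proof of Proposition~\ref{p.A_P} for types $B$/$C$ relied on matching the explicit partition combinatorics of $\mathcal{A}_P$ from \cite{He78} against the description of $K(\cO)$ from \cite{So}; neither ingredient has a direct exceptional-type analogue, and nothing in Sommers' general setup furnishes an automatic comparison with the stabilizer $\mathcal{A}_P$ of a polarization. The paper does not claim this inclusion beyond classical types. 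If you want to pursue this route you must actually establish $K(\cO)\subset\mathcal{A}_P$ in exceptional types, at which point the finite case-check you already described is both shorter and self-contained.
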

\begin{proof}
	For exceptional types, by \cite[Corollary 5.11]{Fu4}, if a Richardson orbit closure $\overline{\cO}$ has no crepant resolution, then $\cO$ is one of the following orbits:
		\begin{align*}
			A_4+A_1, \quad D_5(a_1)  \ \text{in} \ \ E_7, \quad  \text{or} \quad  E_6(a_1) + A_1, \quad E_7(a_3)  \ \text{in}\ E_8.
		\end{align*}
	However, all of the above cases have $\bar{A}(\cO)=\mathbb{Z}_2$.
\end{proof}
\begin{remark}
The claim fails for special nilpotent orbits in exceptional types. For example the simply-connected orbit $A_1 + \tilde{A}_1$ in $F_4$ is special but not Richardson, so it has no crepant resolution by \cite{Fu1}.
\end{remark}

\subsection{Mirror symmetry for parabolic covers}

The Hodge-Deligne polynomial of a smooth variety $Z$ is defined as
\begin{align*}
	{\rm E}(Z; u, v)=\sum_{p, q} \sum_{k \geq 0}(-1)^{k} h^{p, q}\left(H_{c}^{k}(Z, \mathbb{C})\right) u^{p} v^{q},
\end{align*}
where $h^{p, q}\left(H_{c}^{k}(Z, \mathbb{C})\right)$ is the dimension of $(p, q)$--th Hodge-Deligne component in the $k$--th cohomology group with compact supports. Moreover, if $f: Z \rightarrow Y$ is a Zariski locally trivial fibration with fiber $F$ over a closed point, then from the exact sequence of mixed Hodge structures, we have ${\rm E}(Z)={\rm E}(Y) \times {\rm E}(F)$.

The following gives formulae for Hodge-Deligne polynomials of classical Grassmannians, which should be well-known (e.g., \cite[Proposition A.4]{BL19} for the case of Grassmannians).

 \begin{proposition} \label{p.EPolyGr}
 The E-polynomials of Grassmannians, isotropic Grassmannians, and orthogonal Grassmannians are given by the following (where $q=uv$):
 \begin{align*}
	& {\rm E}({\rm Gr}(k,n)) = \prod_{j=1}^k \frac{(q^{n-j+1}-1)}{(q^{k-j+1}-1)} =\prod_{j=1}^k \frac{(q^{n-k+j}-1)}{(q^j-1)}, \\
	& {\rm E}({\rm IG}(k,2n)) = {\rm E}({\rm OG}(k,2n+1)) = \prod_{j=1}^k \frac{(q^{2n-2k+2j}-1)}{(q^j-1)}, \\
	& {\rm E}({\rm OG}(k,2n) = \prod_{j=1}^k \frac{(q^{n-k-1+j}+1)(q^{n-k+j}-1)}{(q^j-1)}.
\end{align*}
\end{proposition}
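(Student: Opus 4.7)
The plan is induction on $k$ using the two natural projections of a two-step partial flag variety; this reduces each E-polynomial to that of the rank-$(k-1)$ case times the E-polynomial of a projective space or a smooth quadric. The main tool is multiplicativity of the E-polynomial under Zariski-locally-trivial fibrations, together with the standard E-polynomial of a smooth quadric $Q^m$: from its Bruhat cell decomposition one finds ${\rm E}(Q^m) = \frac{q^{m+1}-1}{q-1}$ when $m$ is odd, and ${\rm E}(Q^{2s}) = \frac{(q^{s+1}-1)(q^{s}+1)}{q-1}$ when $m=2s$ is even.

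For $\mathrm{Gr}(k,n)$, I form the incidence variety $\mathrm{Fl}(k-1,k;n) = \{(V_{k-1}, V_k) : V_{k-1}\subset V_k\}$. Forgetting $V_{k-1}$ presents it as the projectivization of the tautological subbundle on $\mathrm{Gr}(k,n)$, i.e.\ a Zariski-locally-trivial $\bP^{k-1}$-bundle; forgetting $V_k$ presents it as the projectivization of the universal quotient bundle on $\mathrm{Gr}(k-1,n)$, i.e.\ a Zariski-locally-trivial $\bP^{n-k}$-bundle. Multiplicativity of ${\rm E}$ then gives
\begin{align*}
{\rm E}(\mathrm{Gr}(k,n))\cdot \frac{q^{k}-1}{q-1} \;=\; {\rm E}(\mathrm{Gr}(k-1,n))\cdot \frac{q^{n-k+1}-1}{q-1},
\end{align*}
and induction from $\mathrm{Gr}(0,n)=\mathrm{pt}$ yields both asserted products (they differ only by the reindexing $j\leftrightarrow k-j+1$).

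For $\mathrm{IG}(k,2n)$ I use the analogous incidence variety $\mathrm{IFl}(k-1,k;2n)$ of pairs of isotropic subspaces. The projection to $\mathrm{IG}(k,2n)$ is still a $\bP^{k-1}$-bundle. The projection to $\mathrm{IG}(k-1,2n)$ has, over $V_{k-1}$, fiber equal to the set of isotropic lines in the symplectic quotient $V_{k-1}^{\perp}/V_{k-1}$, which has dimension $2n-2(k-1)$; since every line in a symplectic space is isotropic, this fiber is $\bP^{2n-2k+1}$. Hence
\begin{align*}
{\rm E}(\mathrm{IG}(k,2n)) \;=\; {\rm E}(\mathrm{IG}(k-1,2n))\cdot \frac{q^{2n-2k+2}-1}{q^{k}-1},
\end{align*}
and induction gives the formula. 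The case $\mathrm{OG}(k,2n+1)$ runs identically, except that now $V_{k-1}^{\perp}/V_{k-1}$ is a non-degenerate orthogonal space of \emph{odd} dimension $2n-2k+3$, so the fiber of the second projection is the smooth odd-dimensional quadric $Q^{2n-2k+1}$; by the formula above its E-polynomial is $\frac{q^{2n-2k+2}-1}{q-1}$, i.e.\ equal to ${\rm E}(\bP^{2n-2k+1})$. So the same recursion and base case apply, yielding ${\rm E}(\mathrm{OG}(k,2n+1))={\rm E}(\mathrm{IG}(k,2n))$.

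Finally, for $\mathrm{OG}(k,2n)$ the quotient $V_{k-1}^{\perp}/V_{k-1}$ has \emph{even} dimension $2n-2k+2$, and the variety of isotropic lines is the smooth even-dimensional quadric $Q^{2n-2k}$, whose E-polynomial is $\frac{(q^{n-k+1}-1)(q^{n-k}+1)}{q-1}$. The same induction, starting from $\mathrm{OG}(0,2n)=\mathrm{pt}$, then gives
\begin{align*}
{\rm E}(\mathrm{OG}(k,2n)) \;=\; {\rm E}(\mathrm{OG}(k-1,2n))\cdot \frac{(q^{n-k+1}-1)(q^{n-k}+1)}{q^{k}-1},
\end{align*}
which telescopes to the asserted product after reindexing $j\leftrightarrow k-j+1$. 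There is no serious obstacle here beyond bookkeeping; the only points needing care are (a) the verification that each projection is Zariski locally trivial, which holds because each is literally the projectivization of a vector bundle (or the isotropic-line variety of a quadratic bundle with a local trivialization of the quadratic form), and (b) the parity split in the orthogonal case, which is exactly what produces the extra $(q^{n-k}+1)$ factor.
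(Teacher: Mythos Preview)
Your proof is correct and takes a genuinely different route from the paper's. The paper first computes the E-polynomials of the \emph{full} flag varieties $G/B$ for $G=\mathrm{SO}_{2n}, \mathrm{SL}_k, \mathrm{SO}_{2n-2k}$ by iterating the quadric fibration $\mathrm{SO}_{2m}/B \to \mathbb{Q}^{2m-2}$ with fiber $\mathrm{SO}_{2m-2}/B$, and then recovers ${\rm E}(\mathrm{OG}(k,2n))$ from the fibration $\mathrm{SO}_{2n}/B \to \mathrm{OG}(k,2n)$ with fiber $\mathrm{SL}_k/B \times \mathrm{SO}_{2n-2k}/B$; the other types are handled the same way. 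By contrast, you induct directly on $k$ via the two-step incidence variety, never passing through $G/B$. Your approach is a bit more economical for the Grassmannians themselves and makes the recursion transparent; the paper's approach has the side benefit of producing closed formulae for ${\rm E}(G/B)$ along the way, which fits naturally with the later use of these polynomials for general $G/P$ in Proposition~\ref{top.ms}. Either argument is entirely standard; your handling of the parity split in the orthogonal case and the check of Zariski local triviality are both fine.
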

 \begin{proof}
 The proof is standard, and we just do it for ${\rm OG}(k, 2n)$.  Recall that the $E$--polynomial of a smooth hyperquadric $\mathbb{Q}^{2k}$ is given by:
 $$
 {\rm E}(\mathbb{Q}^{2k}) =q^k + \sum_{j=0}^{2k} q^j  =  \cfrac{(q^k+1)(q^{k+1}-1)}{q-1}.
 $$
 The full flag variety  ${\rm SO}_{2n}/B$ has a fibration over $\mathbb{Q}^{2n-2}$ with fiber being the full flag variety ${\rm SO}_{2n-2}/B$. It follows that
 $$
 {\rm E}({\rm SO}_{2n}/B) = {\rm E}({\rm SO}_{2n-2}/B) \times {\rm E}(\mathbb{Q}^{2n-2}) = \cfrac{1}{(q-1)^{n-1}} \prod_{j=1}^{n-1}(q^j+1)(q^{j+1}-1).
 $$
 In a similar way, we obtain the E-polynomial for the full flag variety of ${\rm SL}$:
 $${\rm E} ({\rm SL}_k/B) = \cfrac{1}{(q-1)^{k-1}} \prod_{j=1}^{k-1}(q^{j+1}-1).$$

 Now we consider the fibration ${\rm SO}_{2n}/B \to {\rm OG}(k, 2n)$, whose fiber is the product ${\rm SL}_k/B \times  {\rm SO}_{2n-2k}/B$. Then a direct computation gives the formula of ${\rm E}({\rm OG}(k, 2n))$.
 \end{proof}

Let $W$ be a normal variety with only canonical singularities. Consider a log resolution of singularities $\rho: Z \rightarrow W$, namely $\rho$ is a resolution such that the exceptional locus of $\rho$ is a divisor whose irreducible components $D_{1}, \cdots, D_{k}$ are smooth with only normal crossings. As $W$ has only canonical singularities, we have
$$
K_{Z}=\rho^* K_W + \sum_{i} a_{i} D_{i}, \ {\text with}\  a_{i} \geq 0.
$$
For any subset $J \subset I:=\{1, \cdots, k\}$, one defines $D_{J}=\bigcap_{j \in J} D_{j}, D_{\emptyset}=Z$ and $D_{J}^{0}=$ $D_{J}-\bigcup_{i \in I-J} D_{i}$, which are all smooth. Then the \emph{stringy E-function} of $W$ is defined by:
\begin{align*}
{\rm E}_{\rm st}(W ; u, v)=\sum_{J \subset I} {\rm E}\left(D_{J}^{0} ; u, v\right) \prod_{j \in J} \frac{u v-1}{(u v)^{a_{j}+1}-1} .	
\end{align*}

We recall the following basic properties of stringy E-functions \cite{Ba98}.
\begin{proposition} \label{p.Batyrev}
\begin{itemize}
	\item[(1)] The stringy E-function is independent of the choice of the log resolution.
	\item[(2)] If $F$ is a smooth variety, then ${\rm E}_{\rm st}(W \times F; u,v) = {\rm E}_{\rm st}(W; u,v) \ {\rm E}(F; u,v)$.
	\item[(3)] If $\rho: Z \to W$ is a crepant resolution,  then  ${\rm E}_{\rm st}(W; u, v)={\rm E}(Z; u, v)$.
\end{itemize}
\end{proposition}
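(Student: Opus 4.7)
The plan is to follow Batyrev's original strategy, attacking (1) first since (2) and (3) will reduce to straightforward consequences of (1) via well-chosen log resolutions. The central idea is that the stringy E-function, while written in terms of a specific log resolution, is really a motivic invariant that depends only on $W$.

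For part (1), I would invoke the weak factorization theorem of Abramovich–Karu–Matsuki–W{\l}odarczyk: any two log resolutions $Z_1 \to W$ and $Z_2 \to W$ are connected by a zigzag of blowups and blowdowns along smooth centers that have normal crossings with the given exceptional divisors. Thus it suffices to verify invariance under a single blowup $\pi \colon Z' \to Z$ along a smooth center $C \subset Z$ of codimension $c$ having normal crossings with $D = \sum D_i$. One computes: the new exceptional divisor $D_0$ has discrepancy $a_0 = c - 1 + \sum_{i \in J_C} a_i$, where $J_C \subset I$ consists of those $i$ with $C \subset D_i$. Then one stratifies $Z'$ according to which of the strict transforms of $D_i$ and the new $D_0$ a point lies on, uses that $\pi$ restricted to $D_0$ is a $\mathbb{P}^{c-1}$-bundle over $C$, splits the sum over subsets of $\{0\} \cup I$ into two pieces according to whether $0 \in J$, and then the telescoping geometric identity
\begin{equation*}
\frac{uv - 1}{(uv)^{a_0+1} - 1} \cdot \bigl(1 + (uv) + \cdots + (uv)^{c-1}\bigr) \prod_{j \in J_C}\frac{uv-1}{(uv)^{a_j+1}-1} = \prod_{j \in J_C}\frac{uv-1}{(uv)^{a_j+1}-1}
\end{equation*}
(after summing appropriately over whether each $D_i$ with $i \in J_C$ appears or not in $J$) yields invariance. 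Alternatively, and more conceptually, one invokes the motivic/$p$-adic integration argument of Denef–Loeser or Kontsevich, where the change-of-variables formula gives independence essentially for free.

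For part (3), let $\rho \colon Z \to W$ be a crepant resolution, so $K_Z = \rho^* K_W$. If the exceptional locus already has simple normal crossings, the formula applied directly to $\rho$ has all $a_i = 0$, so every factor $\frac{uv-1}{(uv)^{a_i+1}-1}$ equals $1$, and summing ${\rm E}(D_J^0)$ over a stratification of $Z$ just recovers ${\rm E}(Z)$. In general, one further blows up $\sigma \colon Z' \to Z$ to a log resolution of $W$; the pullbacks of the original $D_i$ retain discrepancy $0$, while new exceptional divisors created by $\sigma$ acquire discrepancies $a_j \geq 0$. By (1), ${\rm E}_{\rm st}(W)$ may be computed on $Z'$; but the same formula applied to $\sigma$ viewed as a log resolution of the smooth variety $Z$ (whose canonical bundle pulls back trivially) evaluates to ${\rm E}(Z)$, and both computations manifestly produce the same sum since the factors attached to the original $D_i$ are trivial in both.

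For part (2), choose a log resolution $\tilde{X} \to X$ with exceptional SNC divisor $\sum D_i^X$. Because $W \to X$ is Zariski locally trivial with smooth fiber $F$, the base-change $\tilde{W} := \tilde{X} \times_X W \to W$ is again a log resolution, whose components $D_i^W$ are the pullbacks of $D_i^X$ with the same discrepancies; each stratum $(D_J^W)^0$ is a Zariski locally trivial $F$-bundle over $(D_J^X)^0$, hence ${\rm E}\bigl((D_J^W)^0\bigr) = {\rm E}\bigl((D_J^X)^0\bigr) \cdot {\rm E}(F)$. Factoring out ${\rm E}(F)$ from the defining sum gives the claim.

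The main obstacle is unquestionably part (1): while the weak factorization route reduces the problem to a single computation, that computation depends on a deep resolution-theoretic theorem. The motivic-integration proof bypasses weak factorization but requires setting up arc spaces and the change-of-variables formula, which is substantial infrastructure. Given that the rest of the paper uses these properties purely as black boxes, I would simply cite Batyrev~\cite{Ba98} for (1) and sketch the short arguments for (2) and (3) as above.
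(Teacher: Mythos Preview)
Your proposal is correct, and in fact goes well beyond what the paper does: the paper provides no proof at all for this proposition, merely stating it as a recollection of ``basic properties of stringy E-functions following \cite{Ba98}'' and moving on. Your sketches for (1)--(3) are sound (the displayed identity in (1) is imprecise as written, but you rightly flag that further summation over subsets of $J_C$ is needed, and in any case you conclude by citing \cite{Ba98} for (1), which is exactly what the paper does for the entire proposition).
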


By \cite{Be00}, the normalization $\widetilde{\0}$ of $\overline{\0}$ has only canonical singularities. It is then interesting to compute its stringy E-function, which has never been investigated so far. By \cite{Fu1}, any crepant resolution of $\widetilde{\0}$ is isomorphic to a Springer resolution; in particular, $\0$ must be a Richardson orbit. In this case, it follows that the stringy E-function of $\widetilde{\0}$ is actually a polynomial.

Conversely, consider a Richardson orbit $\cO$ with a polarization $P$.  The corresponding  Springer map $\nu: T^*(G/P) \to \widebar{\cO}$ is generically finite.  By taking the Stein factorization, we have
\begin{align*}
	\nu: T^*(G/P)  \xrightarrow{\pi_P} X_P \xrightarrow{\mu_P} \overline{\0},
\end{align*}
where $\pi_P$ is birational, and $\mu_P$ is a finite map. We call $X_P$ the parabolic cover of $\0$ associated with $P$, which is normal with only canonical singularities.

The Springer dual orbit ${}^L\0$ is also a Richardson orbit which admits ${}^LP$ as a polarization. This gives again a parabolic cover $\mu_{{}^LP}: X_{{}^LP} \to \overline{{}^S\0} $. It turns out that $(X_P, X_{{}^LP})$ is a mirror pair in the following sense:
\begin{proposition}[Topological mirror symmetry] \label{top.ms}
For any polarization $P$ of a Richardson orbit $\0$, the two Springer dual parabolic covers $X_P$ and $X_{{}^LP}$ share the same stringy E-polynomial.
\end{proposition}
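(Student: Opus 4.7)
\emph{Proof proposal.} The plan is to use the Springer maps as crepant resolutions of $X_P$ and $X_{{}^LP}$, thereby reducing the equality of stringy $E$--polynomials to an equality of ordinary $E$--polynomials of two Langlands dual partial flag varieties. The latter will then follow from the coincidence of Weyl groups for $\mathrm{Sp}_{2n}$ and $\mathrm{SO}_{2n+1}$.

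First, by Proposition \ref{p.Springer-Richardson}, ${}^LP$ is indeed a polarization of ${}^S\cO$, so $X_{{}^LP}$ and the Stein factorization $\pi_{{}^LP} : T^*({}^LG/{}^LP) \to X_{{}^LP}$ are defined. Next I would argue that $\pi_P : T^*(G/P) \to X_P$ is a crepant resolution: it is birational with connected fibers from a smooth variety onto a normal variety, so a resolution, and the canonical symplectic form on $T^*(G/P)$ descends to the smooth locus of $X_P$, making $X_P$ a symplectic singularity with $\pi_P$ a symplectic (hence crepant) resolution; the same holds for $\pi_{{}^LP}$. Applying Proposition \ref{p.Batyrev}(3), together with the fact that $T^*(G/P)\to G/P$ is a vector bundle of rank $\dim G/P$, one obtains
$${\rm E}_{\rm st}(X_P) = {\rm E}(T^*(G/P)) = q^{\dim G/P}\cdot {\rm E}(G/P),$$
and analogously for the dual side. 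Spaltenstein's dimension preservation gives $\dim\overline{\cO}=\dim\overline{{}^S\cO}$, and since $\pi_P$ is birational onto $X_P$ which is finite over $\overline{\cO}$ (and similarly for ${}^LP$), we conclude $\dim G/P = \dim {}^LG/{}^LP$, so the powers of $q$ match.

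It then suffices to check ${\rm E}(G/P) = {\rm E}({}^LG/{}^LP)$. For this I would invoke the Bruhat decomposition: for any reductive $H$ with standard parabolic $Q_\Theta$,
$${\rm E}(H/Q_\Theta) = \sum_{w \in W^\Theta} q^{\ell(w)},$$
a quantity depending only on the underlying Coxeter pair $(W,W_\Theta)$. Since the canonical bijection of simple roots underlying Langlands duality identifies the Weyl groups of $\mathrm{Sp}_{2n}$ and $\mathrm{SO}_{2n+1}$ as Coxeter systems and matches $W_\Theta$ with $W_{{}^L\Theta}$, the two sums coincide.

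The most delicate step is the first one: namely, confirming that $X_P$ has symplectic singularities so that $\pi_P$ is genuinely crepant and Proposition \ref{p.Batyrev}(3) applies. This is what makes the otherwise intricate stringy invariant collapse to a bare Bruhat count and is the real engine of the proof; everything afterwards is bookkeeping. I would expect to invoke known results on symplectic resolutions of nilpotent orbit covers (as developed by Fu) rather than re-verify the symplectic structure by hand.
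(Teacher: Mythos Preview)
Your argument is correct and follows the same overall architecture as the paper: reduce via the crepant resolution $\pi_P$ and Proposition~\ref{p.Batyrev}(3) to the equality ${\rm E}(G/P)={\rm E}({}^LG/{}^LP)$. The paper dispatches the crepancy step in one line by observing that $T^*(G/P)$ has trivial canonical bundle (the normality and canonical singularities of $X_P$ having been noted just before the statement), which is equivalent to your symplectic-resolution reasoning but shorter; you need not frame this as the ``most delicate'' step.

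The genuine difference is in how you establish ${\rm E}(G/P)={\rm E}({}^LG/{}^LP)$. The paper proceeds computationally: it fibers $G/P\to G/P'={\rm IG}(p,2n)$ and ${}^LG/{}^LP\to{}^LG/{}^LP'={\rm OG}(p,2n+1)$ over the respective isotropic and orthogonal Grassmannians, with common type~$A$ flag fiber, and then invokes the explicit identity ${\rm E}({\rm IG}(p,2n))={\rm E}({\rm OG}(p,2n+1))$ from Proposition~\ref{p.EPolyGr}. Your route via the Bruhat decomposition, writing ${\rm E}(G/P_\Theta)=\sum_{w\in W^\Theta}q^{\ell(w)}$ and noting that this depends only on the Coxeter pair $(W,W_\Theta)$, is more conceptual and sidesteps any explicit formula; it makes transparent that the equality is purely a Weyl-group phenomenon. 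The paper's approach, in turn, exhibits the equality as a concrete Grassmannian identity that is independently recorded and used elsewhere in the paper. Either is a complete proof.
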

\begin{proof}
As $T^*(G/P)$ has trivial canonical bundle,  the birational map $\pi_P$ is a crepant resolution. By Proposition \ref{p.Batyrev}, we have
\begin{align*}
	{\rm E}_{\rm st} (X_P; u, v) = {\rm E}(T^*(G/P); u, v) ={\rm E}(G/P; u,v) (uv)^n,
\end{align*}
where $n=\dim G/P$. In a similar way, we obtain
\begin{align*}
	{\rm E}_{\rm st} (X_{{}^LP}; u, v) = {\rm E}(T^*({}^LG/{}^LP); u, v) = {\rm E}({}^LG/{}^LP; u,v) (uv)^n.
\end{align*}
Let $\p=\fl \oplus \n $ be the Lie algebra of $P$ with Levi type $(p_1, \dots, p_l ; 2q)$. Consider a parabolic subgroup $P^\prime$ with Levi type $(p ;q)$ where $p=p_1+ \dots + p_l$. With the notation $G={\rm Sp}_{sn}$ (resp. ${}^LG = {\rm SO}_{2n+1}$), we have $G/P^{\prime}= {\rm IG}(p,2n)$ (resp. ${}^LG/{}^LP^{\prime} = {\rm OG}(p,2n+1)$). There is a $G$ (resp. ${}^LG$)--equivariant map $f: G/P \rightarrow G/P^{\prime}$ (resp. ${}^Lf: {}^LG/{}^LP \rightarrow {}^LG/{}^LP^{\prime}$) with fiber $P^{\prime}/P$ (resp. ${}^LP^{\prime}/{}^LP$) being the partial flag variety ${\rm Fl}(p_1, \dots, p_l)$ of type $A$. Then
\begin{align*}
	& {\rm E}(G/P) = {\rm E}({\rm IG}(p,2n)) \times {\rm E}({\rm Fl}(p_1, \dots, p_l)), \\
	& {\rm E}({}^LG/{}^LP) = {\rm E}({\rm OG}(p,2n+1)) \times {\rm E}({\rm Fl}(p_1, \dots, p_l)).
\end{align*}
We conclude the proof by Proposition \ref{p.EPolyGr}.
\end{proof}

This completes the proof of Theorem \ref{t.Richardson}: as (1) follows from Proposition \ref{p.Springer-Richardson} and Lemma \ref{l.EqualityOfCanonicalQuotient}, (2) follows from Corollary \ref{c.DegreeDuality} and (3) follows from Proposition \ref{top.ms}.

Let us end this subsection with the following example, which completes that from Section \ref{s.Ric}.
\begin{example} \label{e.kbig}
Consider the isotropical Grassmannian ${\rm IG}(k, 2n)$  and the associated Springer map $\nu_k: T^*{\rm IG}(k, 2n) \to \overline{\0}_k$.  The dual part is the orthogonal Grassmannian ${\rm OG}(k,2n+1)$ with the dual Springer map  ${}^L \nu_k: T^*{\rm OG}(k, 2n+1) \to \overline{{}^S\0}_k$.
Assume $n-1\geq k \geq \frac{2n+1}{3}$ in the following.

The map  $\nu_k$ is always birational and $\0_k$ is given by the partition $[3^{2n-2k}, 2^{3k-2n}]$.  In this case, the closure $\overline{\0}_k$ is normal by \cite{KP82}. It follows that $\pi_1(\0_k) = A(\0_k) =\mathbb{Z}_2$ while Lusztig's canonical quotient $\bar{A}(\0_k)$ is trivial for $k$ odd and $\mathbb{Z}_2$ for $k$ even.

When $k$ is odd, then ${}^S\0_k$ is given by $[3^{2n+1-2k}, 2^{3k-2n-1}]$ and the Springer map ${}^L\nu_k$ is birational.  In this case,  $A({}^S\0_k) =\bar{A}({}^S\0_k) $ is trivial and the closure $\overline{{}^S\0_k}$ is non-normal if $k > \frac{2n+1}{3}$ and normal if $k=\frac{2n+1}{3}$ by \cite{KP82}.
 When $k$ is even, the orbit ${}^S\0_k$ has partition $[3^{2n+1-2k}, 2^{3k-2n-2}, 1^2]$ and  the Springer map ${}^L\nu_k$ is of degree 2.   In this case, we have $\pi_1({}^S\0_k) = A({}^S\0_k) =\bar{A}({}^S\0_k)  =\mathbb{Z}_2$ and the closure $\overline{{}^S\0_k}$ is normal by \cite{KP82}.  It follows that if $k$ is odd, then $(\overline{\0}_k, \widetilde{{}^S\0}_k)$ is a mirror pair, while for $k$ even, the pair $(\overline{\0}_k, {}^LX_k)$ is a mirror pair, where ${}^LX_k \to \overline{{}^S\0}_k$ is the double cover.

Similar to Question \ref{question},  when $k$ is even, let $X_k \to \overline{\0}_k$ be the double cover.  The pair $(X_k, \widetilde{{}^S\0}_k)$ is out the scope of Proposition \ref{top.ms}, hence it is unclear if this is a mirror pair.
\end{example}

\subsection{Other examples of mirror pairs beyond parabolic covers}

 For a nilpotent orbit closure $\overline{\0}$, a {\em symplectic cover} is a generically finite morphism $Z \to \overline{\0}$ such that $Z$ is smooth and symplectic. The existence of a symplectic cover is equivalent to the existence of a crepant resolution for some finite cover of $\overline{\0}$.
The following problem is then closely related to the focus of our paper.

\begin{problem}
Find out all symplectic covers of nilpotent orbit closures.
\end{problem}

Typical examples of symplectic covers are given by Springer maps. By \cite{Fu1}, crepant resolutions of nilpotent orbit closures are all given by Springer resolutions. It was observed in \cite{Fu2} that some covers of odd degrees do not admit any crepant resolution. Recently, Namikawa (\cite{N19}) constructed all $\mathbb{Q}$-factorial terminalizations of universal covers of nilpotent orbit closures in classical types. It is proven therein (\cite[Corollary 1.11]{N19}) that for nilpotent orbits in $\mathfrak{sl}_n$, symplectic covers are symplectic resolutions except for the double cover $\mathbb{C}^2 \to \overline{\0}_{[2]} \subset \mathfrak{sl}_2$. Starting from this, we can construct a new example of symplectic cover in type $C$ (\cite[Example 2.2]{N19}), which we now recall.

Let's consider the dual regular orbits $\cO_{[2n]} \subset \mathfrak{sp}_{2n}$ and $\cO_{[2n+1]} \subset \mathfrak{so}_{2n+1}$. They are Richardson orbits and have a unique polarization given by Borel subgroups. By Theorem \ref{t.Richardson}, the two orbit closures are mirrors from one to the other. Note that $\pi_1(\cO_{[2n]} )=\pi_1(\cO_{[2n+1]})=\mathbb{Z}_2$.  It is then natural to ask what is the mirror of the universal cover of $\overline{\cO}_{[2n]}$.

In fact, ${\cO}_{[2n]}$ has another induction as follows: let $Q \subset {\rm Sp}_{2n}$ be the parabolic subgroup with Levi subalgebra $\fl \cong \mathfrak{gl}_1^{\oplus n-1} \oplus \mathfrak{sp}_2$, then we have the following generalized Springer map
\begin{align*}
	{\rm Sp}_{2n} \times^Q (\fu + \overline{\cO}_{[2]}) \longrightarrow \overline{\cO}_{[2n]}.
\end{align*}
As  $\overline{\cO}_{[2]}= \mathbb{C}^2/ \pm 1$, we have the following diagram
\begin{align*}
	\xymatrix{{\rm Sp}_{2n} \times^Q (\fu + \mathbb{C}^2)  \ar[d]_{2:1} \ar[r]   & \widehat{\cO}_{[2n]}  \ar[d]^{2:1}  \\   {\rm Sp}_{2n} \times^Q (\fu + \overline{\cO}_{[2]}) \ar[r] & \overline{ \cO}_{[2n]} }
\end{align*}
here $\widehat{\cO}_{[2n]} $ is the universal cover of $\overline{ \cO}_{[2n]}$. This gives an example of symplectic cover, which is different from Springer maps. Note that the first horizontal map gives a crepant resolution of $\widehat{\cO}_{[2n]} $, which allows us to compute
the stringy E-polynomial of $\widehat{\cO}_{[2n]} $:
\begin{align*}
	{\rm E}_{\rm st} (\widehat{\cO}_{[2n]} ) &= {\rm E}({\rm Sp}_{2n} \times^Q (\fu + \mathbb{C}^2) ) \\
	                       &={\rm E}({\rm Sp}_{2n} \times^Q \fu  ) \times {\rm E}(\mathbb{C}^2) \\
	                       &={\rm E}(T^*({\rm Sp}_{2n}/Q)) \times {\rm E}(\mathbb{C}^2).
\end{align*}
By Proposition \ref{top.ms}, we have
\begin{align*}
	{\rm E}(T^*({\rm Sp}_{2n}/Q)) = {\rm E} (T^*({\rm SO}_{2n+1}/{}^LQ) )={\rm E}_{\rm st}(\overline{\cO}_{[2n-1, 1^2]}),
\end{align*}
where ${\cO}_{[2n-1, 1^2]}$ is the subregular orbit in $\mathfrak{so}_{2n+1}$.  This gives the following mirror pair:
\begin{proposition}
	The product $\overline{\cO}_{[2n-1,1^2]} \times \mathbb{C}^2$ is a mirror of the universal cover  $\widehat{\cO}_{[2n]}$ of $\overline{\cO}_{[2n]} \subset \mathfrak{sp}_{2n}$.\end{proposition}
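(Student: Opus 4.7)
The plan is to assemble the identity from the chain of equalities already sketched in the paragraph preceding the proposition, filling in the two pieces that need verification and invoking multiplicativity of the stringy E--function on a smooth factor.

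First, I would verify that the top horizontal map ${\rm Sp}_{2n} \times^Q (\fu + \mathbb{C}^2) \to \widehat{\cO}_{[2n]}$ in the diagram is a crepant resolution. Note that $\fu + \mathbb{C}^2 = \fu \oplus \mathbb{C}^2$ as $Q$--modules (the summands live in the unipotent radical and the $\mathfrak{sp}_2$--factor of the Levi respectively), so ${\rm Sp}_{2n} \times^Q (\fu \oplus \mathbb{C}^2)$ is a $Q$--equivariant vector bundle over the flag variety ${\rm Sp}_{2n}/Q$, in particular smooth with trivial canonical bundle; it is symplectic since it is the Hamiltonian reduction associated to the larger Levi, and it is generically $2{:}1$ onto $\overline{\cO}_{[2n]}$, matching the degree of the universal cover $\widehat{\cO}_{[2n]}\to \overline{\cO}_{[2n]}$. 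Hence by Proposition~\ref{p.Batyrev}(3),
\begin{align*}
{\rm E}_{\rm st}(\widehat{\cO}_{[2n]}) = {\rm E}({\rm Sp}_{2n}\times^Q (\fu\oplus\mathbb{C}^2)) = {\rm E}(T^*({\rm Sp}_{2n}/Q))\cdot {\rm E}(\mathbb{C}^2),
\end{align*}
the last equality coming from the fact that the projection to ${\rm Sp}_{2n}/Q$ is a Zariski-locally trivial vector-bundle fibration.

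Second, the Langlands dual parabolic ${}^LQ\subset {\rm SO}_{2n+1}$ has Levi $\mathfrak{gl}_1^{\oplus n-1}\oplus \mathfrak{so}_3$, and the induction formula \eqref{induction} applied to the zero orbit gives induced partition $[2n-1,1^2]$, which is already in $\mathcal{P}_1(2n+1)$. Thus $\cO_{[2n-1,1^2]}$ is Richardson with polarization ${}^LQ$. I would then show that the associated Springer map $\nu_{{}^LQ}\colon T^*({\rm SO}_{2n+1}/{}^LQ)\to \overline{\cO}_{[2n-1,1^2]}$ is birational. The Langlands-dual Richardson orbit is $\cO_{[2n-2,2]}\subset \mathfrak{sp}_{2n}$ (via Proposition~\ref{p.Springer-Richardson}), whose Lusztig quotient is $\mathbb{Z}_2$ by Proposition~\ref{comb-q}. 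Corollary~\ref{c.DegreeDuality} then gives $\deg\nu_Q\cdot \deg\nu_{{}^LQ}=2$, so it suffices to check $\deg\nu_Q=2$. This follows from Hesselink's description of $\mathcal{A}_Q$: the $C$--collapse $[2n-1,1]\to [2n-2,2]$ happens at the index pair $(1,2)$, forcing the generator $x_{2n-2}\in \mathcal{A}(\cO_{[2n-2,2]})\simeq \mathbb{Z}_2$ to lie outside $\mathcal{A}_Q$. Birationality then yields a crepant (symplectic) resolution, so
\begin{align*}
{\rm E}(T^*({\rm SO}_{2n+1}/{}^LQ))= {\rm E}_{\rm st}(\overline{\cO}_{[2n-1,1^2]}).
\end{align*}

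Finally, I would chain these with Proposition~\ref{top.ms} applied to the Richardson pair $(Q,{}^LQ)$, which gives ${\rm E}(T^*({\rm Sp}_{2n}/Q))={\rm E}(T^*({\rm SO}_{2n+1}/{}^LQ))$, and with multiplicativity ${\rm E}_{\rm st}(\overline{\cO}_{[2n-1,1^2]}\times \mathbb{C}^2)={\rm E}_{\rm st}(\overline{\cO}_{[2n-1,1^2]})\cdot {\rm E}(\mathbb{C}^2)$ (Proposition~\ref{p.Batyrev}(2) applied to the trivial projection, since $\mathbb{C}^2$ is smooth), to conclude ${\rm E}_{\rm st}(\widehat{\cO}_{[2n]})={\rm E}_{\rm st}(\overline{\cO}_{[2n-1,1^2]}\times \mathbb{C}^2)$. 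The main obstacle, and the only step not essentially done in the text, is the birationality of $\nu_{{}^LQ}$, which is a small combinatorial check on stabilizers; everything else is packaging.
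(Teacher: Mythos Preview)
Your argument is correct and follows exactly the chain of equalities the paper sets up in the paragraph preceding the proposition, supplying only the extra detail that $\nu_{{}^LQ}$ is birational. One small slip: $\mathcal{A}(\cO_{[2n-2,2]})\simeq\mathbb{Z}_2^2$ for $n\geq 3$, not $\mathbb{Z}_2$, though this does not affect your degree count; and since the induced partition $[2n-1,1^2]$ already lies in $\mathcal{P}_1(2n+1)$ with no $B$--collapse, Lemma~\ref{degree-Springer-map} gives $\deg\nu_{{}^LQ}=1$ directly, so the detour through Corollary~\ref{c.DegreeDuality} and the Hesselink computation is unnecessary.
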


It is unclear to us how to construct a mirror of the universal cover of  $\overline{\cO}_{[2n+1]}$.

We can generalize this construction by noting that the minimal nilpotent orbit closure $\overline{\0}_{min} \subset \mathfrak{sp}_{2q}$ is isomorphic to $\mathbb{C}^{2q}/\pm1$, which has a symplectic cover given by $\mathbb{C}^{2q} \to \overline{\0}_{min}$.

Take a parabolic subalgebra $\mathfrak{p} \subset \mathfrak{sp}_{2n}$ of type $(p_1, \cdots, p_l; 2q)$, namely its Levi part $\fl$ is given by
$$
\fl \cong \mathfrak{gl}_{p_1} \oplus   \mathfrak{gl}_{p_2} \oplus \cdots  \oplus \mathfrak{gl}_{p_l} \oplus \mathfrak{sp}_{2q},  \  \text{with} \ 2 \sum_{j=1}^l p_j + 2q = 2n.
$$

Let $P$ be a standard parabolic subgroup with Lie algebra $\mathfrak{p}$. Then we have the following induction
$$
{\rm Sp}_{2n} \times^P (\mathfrak{u} + \overline{\0}_{min}) \to \overline{\0}_{C},
$$
where $\0_{min}$ is the minimal nilpotent orbit in $\mathfrak{sp}_{2q}$. Using the double cover $\mathbb{C}^{2q} \to \overline{\0}_{min}$, we have a symplectic cover with its Stein factorization:
$$
{\rm Sp}_{2n} \times^P (\fu + \mathbb{C}^{2q})  \xrightarrow{{\text{crepant  resolution}}} \ X_C \xrightarrow{{\text{finite}}} \overline{\0}_C
$$

Now consider the Langlands dual parabolic subgroup ${}^LP$ and the associated Springer map $T^* ({\rm SO}_{2n+1}/{}^LP) \to \overline{\0}_B$. We denote by
$Y_B \to \overline{\0}_B$ the parabolic cover associated to ${}^LP$.  Then by a similar argument as before, we have
\begin{proposition}
The product $Y_B \times \mathbb{C}^{2q}$ is a mirror of $X_C$.
\end{proposition}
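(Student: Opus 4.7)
The plan is to mirror the argument used for the preceding proposition about $\widehat{\cO}_{[2n]}$, generalized to an arbitrary parabolic $P$ of Levi type $(p_1,\ldots,p_l;2q)$. The strategy is to compute both stringy E--functions directly by exhibiting crepant resolutions, and then to interchange the two sides via the Langlands duality of E--polynomials of partial flag varieties already established in Proposition \ref{top.ms}.

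First I would compute $\mathrm{E}_{\mathrm{st}}(X_C)$. The map $\mathrm{Sp}_{2n}\times^P(\fu+\mathbb{C}^{2q})\to X_C$ is stated to be a crepant resolution (its source is smooth symplectic since it is a vector bundle whose fiber $\fu\oplus\mathbb{C}^{2q}$ carries the natural symplectic structure coming from the Killing pairing on $\fu$ and the standard form on $\mathbb{C}^{2q}$), so by Proposition \ref{p.Batyrev}(3),
\begin{align*}
\mathrm{E}_{\mathrm{st}}(X_C) = \mathrm{E}\bigl(\mathrm{Sp}_{2n}\times^P(\fu+\mathbb{C}^{2q})\bigr).
\end{align*}
Because $\mathrm{Sp}_{2n}\times^P(\fu+\mathbb{C}^{2q})\to \mathrm{Sp}_{2n}/P$ is a Zariski locally trivial vector bundle with fiber $\fu\oplus\mathbb{C}^{2q}$, multiplicativity of E--polynomials gives
\begin{align*}
\mathrm{E}\bigl(\mathrm{Sp}_{2n}\times^P(\fu+\mathbb{C}^{2q})\bigr) = \mathrm{E}(\mathrm{Sp}_{2n}/P)\cdot (uv)^{\dim\fu}\cdot \mathrm{E}(\mathbb{C}^{2q}) = \mathrm{E}\bigl(T^*(\mathrm{Sp}_{2n}/P)\bigr)\cdot \mathrm{E}(\mathbb{C}^{2q}).
\end{align*}

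Second, I treat the dual side analogously. The Springer map $T^*(\mathrm{SO}_{2n+1}/{}^LP)\to Y_B$, being the birational factor in the Stein factorization of $\nu_{{}^LP}$ with smooth symplectic source, is a crepant resolution; hence $\mathrm{E}_{\mathrm{st}}(Y_B)=\mathrm{E}\bigl(T^*(\mathrm{SO}_{2n+1}/{}^LP)\bigr)$. The decisive input is then the identity
\begin{align*}
\mathrm{E}\bigl(T^*(\mathrm{Sp}_{2n}/P)\bigr)=\mathrm{E}\bigl(T^*(\mathrm{SO}_{2n+1}/{}^LP)\bigr),
\end{align*}
which follows verbatim from the computation in the proof of Proposition \ref{top.ms}: factoring $G/P$ as a flag--variety bundle over $\mathrm{IG}(p,2n)$ with $p=\sum p_j$ and the dual analogously over $\mathrm{OG}(p,2n+1)$, one applies Proposition \ref{p.EPolyGr} (whose first two lines give $\mathrm{E}(\mathrm{IG}(p,2n))=\mathrm{E}(\mathrm{OG}(p,2n+1))$) together with the matching of $\dim \mathrm{Sp}_{2n}/P=\dim \mathrm{SO}_{2n+1}/{}^LP$ (which follows from $\dim\mathfrak{sp}_{2q}=\dim\mathfrak{so}_{2q+1}$ and the common ranks). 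Combining the displayed equalities with Proposition \ref{p.Batyrev}(2) applied to the trivial bundle $Y_B\times\mathbb{C}^{2q}\to Y_B$ yields
\begin{align*}
\mathrm{E}_{\mathrm{st}}(X_C)=\mathrm{E}_{\mathrm{st}}(Y_B)\cdot\mathrm{E}(\mathbb{C}^{2q})=\mathrm{E}_{\mathrm{st}}(Y_B\times\mathbb{C}^{2q}),
\end{align*}
as required.

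There is no serious obstacle; the argument is a direct generalization of the regular--orbit case. The only point that demands a little care is checking that $\mathrm{Sp}_{2n}\times^P(\fu+\mathbb{C}^{2q})\to X_C$ genuinely is crepant, but this is given in the set--up, and philosophically it is forced by the fact that $\mathbb{C}^{2q}\to \overline{\0}_{\min}\subset\mathfrak{sp}_{2q}$ is an $\mathrm{Sp}_{2q}$--equivariant symplectic cover, so the bundle inherits a holomorphic symplectic form whose top wedge trivializes the canonical sheaf. With that in hand, everything else is a mechanical comparison of E--polynomials under Langlands duality.
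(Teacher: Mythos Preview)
Your proof is correct and follows essentially the same route as the paper, which simply states that the result follows ``by a similar argument as before'': compute $\mathrm{E}_{\mathrm{st}}(X_C)$ via the crepant resolution $\mathrm{Sp}_{2n}\times^P(\fu+\mathbb{C}^{2q})\to X_C$, factor off $\mathrm{E}(\mathbb{C}^{2q})$, and then invoke the Langlands comparison $\mathrm{E}(T^*(\mathrm{Sp}_{2n}/P))=\mathrm{E}(T^*(\mathrm{SO}_{2n+1}/{}^LP))$ from Proposition~\ref{top.ms} to identify the remaining factor with $\mathrm{E}_{\mathrm{st}}(Y_B)$. Your explicit invocation of Proposition~\ref{p.Batyrev}(2) to handle the product $Y_B\times\mathbb{C}^{2q}$ is a clean way to finish, and your remarks on why the resolution is crepant are accurate but, as you note, already granted by the set-up.
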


\begin{remark}
Note that $\mathfrak{b}_2=\mathfrak{c}_2$, hence the minimal nilpotent orbit closure $\overline{\0}_{min} \subset \mathfrak{so}_5$ is also isomorphic to $\mathbb{C}^4/\pm 1$, which again admits a double cover from $\mathbb{C}^4$.  Then we can repeat the previous construction to obtain more examples of mirror pairs.
\end{remark}

\section{Seesaw property of Langlands dual generalized Springer maps} \label{DGSM}


By abuse of notations, we define the addition/subtraction of two partitions formally as follows:
\begin{align*}
	\mathbf{d}=[d_1, d_2, \cdots, d_n], \quad \mathbf{f}=[f_1, f_2, \cdots, f_m], \quad  \text{for} \quad n \geq m \\
	\mathbf{d} \pm \mathbf{f} := [d_1 \pm f_1, \cdots , d_m  \pm f_m, d_{m+1}, \cdots, d_n ].
\end{align*} 

Lusztig \cite{Lus79} shows that any nilpotent orbit induced from a special one is again special.
Now we will prove the converse as follows:

\begin{proposition} \label{special-from-rigid}
	Every special non-rigid orbit of type $B_n$ or $C_n$ can be induced from a rigid special orbit.
\end{proposition}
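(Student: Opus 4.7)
The plan is to give an explicit combinatorial construction that writes every special non-rigid partition in the form $(\mathbf{d}_0+2\mathbf{c})_\varepsilon$ with $\mathbf{d}_0$ rigid special in a strictly smaller classical Lie algebra of the same type. First, one reduces the proposition to exactly such a statement: the only rigid orbit in $\mathfrak{gl}_{p_i}$ is the zero orbit $[1^{p_i}]$, so by formula (\ref{induction}) inducing a rigid special orbit $([1^{p_1}],\ldots,[1^{p_l}];\cO_{\mathbf{d}_0})$ from a Levi $\fl=\mathfrak{gl}_{p_1}\oplus\cdots\oplus\mathfrak{gl}_{p_l}\oplus\g'$ of $\g$ produces the partition $(\mathbf{d}_0+2\mathbf{p}^*)_\varepsilon$, where $\mathbf{p}^*$ is the conjugate of $[p_1,\ldots,p_l]$ and $\varepsilon$ is the appropriate collapse. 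Consequently, producing a pair $(\mathbf{d}_0,\mathbf{c})$ with $(\mathbf{d}_0+2\mathbf{c})_\varepsilon=\mathbf{d}$ yields the desired Levi (via $\mathbf{p}=\mathbf{c}^*$) and rigid special orbit.

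Next I would construct $(\mathbf{d}_0,\mathbf{c})$ directly from the parity-pair structure of $\mathbf{d}$ recalled in Section 2.1. For type $C_n$, the pairs $(d_{2j-1},d_{2j})$ of a special partition have the same parity within each maximal ``parity block'', and the blocks alternate even/odd going top-to-bottom; the same is true of a rigid special $\mathbf{d}_0=[k^{2n_k},(k-1)^{2n_{k-1}},\ldots,1^{2n_1}]$ (Section 2.4), with the additional requirements that parts within each block be equal, consecutive block values differ by exactly $1$, and the multiplicity conditions $n_{\text{even}}\ge 2$, $n_{\text{odd}}\ge 1$ hold. Given $\mathbf{d}$, take $\mathbf{d}_0$ to be the minimal rigid special staircase whose parity-block profile matches that of $\mathbf{d}$ (inserting missing staircase rungs when $\mathbf{d}$ skips parity levels and using the smallest allowed block sizes), and read off $\mathbf{c}$ from the positional differences. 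The example $\mathbf{d}_C=[4,2,1,1]=([5,1,1,1])_C=(\mathbf{d}_0+2\mathbf{c})_C$ with $\mathbf{d}_0=[1^4]$ (zero orbit in $\mathfrak{sp}_4$) and $\mathbf{c}=[2]$, corresponding to $\fl=\mathfrak{gl}_1\oplus\mathfrak{gl}_1\oplus\mathfrak{sp}_4$, illustrates the flavor.

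The main obstacle will be controlling the $\varepsilon$-collapse: the raw sum $\mathbf{d}_0+2\mathbf{c}$ typically lies in $\cP(N)$ but not in $\cP_\varepsilon(N)$, and the collapse can reshuffle parts across parity-block boundaries. Here Lemma \ref{l.trick}, applied iteratively, is the key tool, together with a case analysis of how adjacent blocks interact under collapse. The verification then breaks into: (i) $\mathbf{d}_0$ satisfies the rigid special multiplicity conditions, which follows from the minimal-staircase choice; (ii) $\mathbf{c}$ is a weakly decreasing non-negative sequence, which is forced by the monotonicity of $\mathbf{d}$ together with the layer structure; and (iii) $\mathbf{d}_0$ lives in a strictly smaller classical Lie algebra, which holds precisely because $\mathbf{d}$ is non-rigid (the failure of at least one rigid condition for $\mathbf{d}$ forces at least one $c_j>0$, shrinking the $\g'$ factor). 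Type $B_n$ is handled in parallel, with the modifications that the top odd layer of a rigid special $B$-partition has odd multiplicity $2n_k-1$ and the $B$-collapse rule replaces the $C$-collapse throughout; one could, in principle, deduce $B_n$ from $C_n$ using Proposition \ref{p.Springerdual} and Lemma \ref{rigid-special-under-Springer-dual}, but since the Springer-dual compatibility for the generalized Springer maps (Theorem \ref{t.induced}(3)) depends on this proposition, carrying out both types directly is cleaner.
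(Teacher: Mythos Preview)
Your reduction to finding a rigid special $\mathbf{d}_0$ and a partition $\mathbf{c}$ with $(\mathbf{d}_0+2\mathbf{c})_\varepsilon=\mathbf{d}$ is correct, and the paper implicitly uses the same reformulation. However, your proposed construction of $\mathbf{d}_0$ as the ``minimal rigid special staircase whose parity-block profile matches that of $\mathbf{d}$'' has a genuine gap. Consider $\mathbf{d}=[6,6,3,3,2,2]$ in type $C$: this is special, Richardson, has $6$ parts, and has parity profile even--odd--even. A rigid special staircase with three parity blocks must be of the form $[3^{2n_3},2^{2n_2},1^{2n_1}]$ with $n_2\ge 2$, hence has at least $8$ parts; but adding $2\mathbf{c}$ and taking the $C$-collapse never decreases the number of nonzero parts, so no such staircase can induce $\mathbf{d}$. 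The correct choice here is a zero orbit $[1^q]$---e.g.\ $\mathbf{d}_0=[1^4]$ with $\mathbf{c}=[3,2,1,1,1,1]$ gives $[7,5,3,3,2,2]_C=\mathbf{d}$---which has a completely different parity profile. In general the multiplicity constraint $n_{\text{even}}\ge 2$ forces your staircase to be too long whenever $\mathbf{d}$ has an even block of length exactly $2$, so your construction systematically fails on a large class of Richardson orbits. The ``inserting missing rungs'' clause only makes this worse, since inserting rungs lengthens $\mathbf{d}_0$.

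The paper avoids this by working iteratively from the bottom of the partition rather than matching a global profile. After stripping an even partition to make the smallest part equal to $1$, it examines the lowest configuration $[\ldots,d_s^{2n_s}\equiv 1,\ d_{s+1}\equiv\cdots\equiv d_{s+t}\equiv 0,\ 1^{2n_1}]$ and takes the rigid special building block to be $[2^t,1^{2n_1}]$ when $t\ge 4$ and $[1^{t+2n_1}]$ when $t\le 2$, then recurses on what remains. This local layer-by-layer rule automatically respects the constraint $n_{\text{even}}\ge 2$ (it never creates a $2$-block of size $2$) and keeps the part count of $\mathbf{d}_0$ bounded by that of $\mathbf{d}$.
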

\begin{proof}
	We only prove it for type $C_n$ as the proof is similar for type $B_n$.
	Consider a special partition of type $C_n$:
	\begin{align*}
\mathbf{d} = [d_1 \equiv \cdots  \equiv d_{2k} \equiv 0 ,  d_{2k+1} \equiv d_{2k+2} \equiv 1, \ldots , d_{2j-1} \equiv d_{2j}, \ldots,  d_r], \text{with} \ d_{r} \geq 1.
\end{align*}

If all $d_i$ are even, then  $\0_{\mathbf{d}}$ is a Richardson orbit induced from the zero orbit.  In the following, we assume that there are odd parts in $\mathbf{d}$.
	
We may assume $d_r=1$ after subtracting an even partition from $\mathbf{d}$.  Write the last part of this partition as $[\ldots, d_s^{2n_s} \equiv 1, d_{s+1} \equiv \ldots \equiv d_{s+t} \equiv 0, 1^{2n_1}]$ (where $t=0$ means there is no even part between the two odd parts).  As $\mathbf{d}$ is special, the number $t$ of even parts between the two odd parts 1 and $d_s$ is even.  We define the rigid special partition
\begin{align*}
\mathbf{d}_{rs} =
\begin{cases}
    [2^{t}, 1^{2n_1}] \quad &\text{if} \quad t \geq 4    \\ [1^{t+2n_1}] \quad &\text{if} \quad t \leq 2,
\end{cases}
\end{align*}
which will induce the partition $[d_{s+1}, \ldots,  d_{s+t}, 1^{2n_1}]$.  Note that in the case $t=2$, we need to perform a $C$--collapse. We conclude the proof by repeating this process.
\end{proof}

\begin{remark}
An orbit is said {\em birationally rigid} if it is not birationally induced. By \cite[Proposition 2.3 ]{Los18},  any special orbit can be birationally induced from a birationally rigid special orbit, which is different from our claim, as a birationally rigid orbit is not necessarily rigid.
\end{remark}
\begin{example}
Given a special orbit, it can be induced from different rigid special orbits. For example,  in $\mathfrak{sp}_{66}$, the partition  $[8^4, 6^2, 4^4, 3^2]$ can be induced from $[2^{10}, 1^2] $ by our construction. On the other hand, it can also be induced from the following rigid special partition	
\begin{align*}
	\mathbf{d}_{rs}= [4^4, 3^2, 2^4, 1^2].
\end{align*}
The partition is obtained by a $C$-collapse of $[8^4, 7, 5, 4^4, 3^2]$.
\end{example}

\begin{theorem} \label{main-theorem-1}
Let $P<G$ be a parabolic subgroup with Lie algebra $\mathfrak{p}=\fl \oplus \fu$ and ${}^LP < {}^LG$ the Langlands dual parabolic subgroup with Lie algebra ${}^L\mathfrak{p}={}^L\fl \oplus {}^L\fu$. Let $\cO_{\mathbf{d}_{rs}} \subset \fl$ be a rigid special nilpotent orbit. Under the Springer duality and Langlands duality, we have the following diagram for generalized Springer maps:
\begin{align}
\begin{split}
\xymatrix{G\times^P(\mathfrak{u} + \overline{\mathcal{O}}_{\mathbf{d}_{rs}} )  \ar[d]_{\nu} & \stackrel{Langlands \ dual}{\rightsquigarrow } & {}^LG \times^{{}^L P} ({}^L\mathfrak{u} + \overline{{}^S\mathcal{O}}_{\mathbf{d}_{rs}} ) \ar[d]^{{}^L \nu}  \\   \overline{ \mathcal{O}}_{\bf d_C } & \stackrel{Springer\ dual}{\rightsquigarrow } & \widebar{ \mathcal{O}}_{\bf d_B }=\overline{{}^S \mathcal{O}}_{\bf d_C } }.	\label{duality-diagram-general}
\end{split}
\end{align}
Moreover, we have the following seesaw property for the degrees:
$$
\deg \nu \cdot \deg {}^L\nu = |\bar{A}({ \mathcal{O}}_{\bf d_C }) / \bar{A}(\cO_{\mathbf{d}_{rs}})| = |\bar{A}({ \mathcal{O}}_{\bf d_B }) / \bar{A}({}^S\cO_{\mathbf{d}_{rs}})|.
$$
\end{theorem}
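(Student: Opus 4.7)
The commutativity of diagram (\ref{duality-diagram-general}) is essentially a restatement of results already established in Section \ref{IOLD}. Lemma \ref{rigid-special-under-Springer-dual} ensures that ${}^S\cO_{\mathbf{d}_{rs}}$ is again a rigid special orbit in ${}^L\fl$, so the right-hand generalized Springer map makes sense, and Proposition \ref{Induce-under-Langlands-dual} identifies the image of ${}^L\nu$ with $\overline{{}^S\cO}_{\mathbf{d}_C}=\overline{\cO}_{\mathbf{d}_B}$. The real content of the theorem is the seesaw equality for degrees.

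The plan for the degree formula proceeds in three steps. First, I express $\deg \nu$ as the index of a subgroup of $\mathcal{A}(\cO_{\mathbf{d}_C})$. Picking a generic $y=u+x_{\mathbf{t}}\in \fu+\cO_{\mathbf{d}_{rs}}$, a standard analysis of fibers of maps of the form $G\times^P V\to \overline{\cO}$ identifies $\deg\nu$ with the index $[\mathcal{A}(\cO_{\mathbf{d}_C}):\mathcal{A}_{P,\mathbf{t}}]$, where $\mathcal{A}_{P,\mathbf{t}}$ is generated inside $\mathcal{A}(\cO_{\mathbf{d}_C})$ by the image of the stabilizer $\mathcal{A}_P$ (as in the Richardson case of Section \ref{DSM}) together with the image of $\mathcal{A}(\cO_{\mathbf{d}_{rs}})$ via the Levi inclusion $\fl\hookrightarrow \g$. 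Since $\cO_{\mathbf{d}_{rs}}$ is rigid special, Proposition \ref{canonical-quotient-rigid-special} gives $\mathcal{A}(\cO_{\mathbf{d}_{rs}})=\bar{A}(\cO_{\mathbf{d}_{rs}})$, so the contribution of the base orbit already factors through Lusztig's canonical quotient.

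Second, I generalize Proposition \ref{p.A_P} to this setting: the kernel $K(\cO_{\mathbf{d}_C})$ of $\mathcal{A}(\cO_{\mathbf{d}_C})\to\bar{A}(\cO_{\mathbf{d}_C})$ is contained in $\mathcal{A}_{P,\mathbf{t}}$, using the same combinatorial criterion via the sets $S_{\mathrm{odd}}$, $S_{\mathrm{even}}$ and the collapse indices $I_{\chi}(\mathbf{d})$, now with parts of $\mathbf{d}_{rs}$ contributing additional forced pairings. It follows that $\deg\nu=|\bar{A}_{P,\mathbf{t}}|$ where $\bar{A}_{P,\mathbf{t}}:=\bar{A}(\cO_{\mathbf{d}_C})/\mathrm{image}(\bar{A}(\cO_{\mathbf{d}_{rs}})\cdot \mathcal{A}_P)$. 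Third, I extend Lemma \ref{l.quotient-group} and Proposition \ref{p.LusztigCanonical} to decompose
\[
\bar{A}(\cO_{\mathbf{d}_C})\;\cong\;\bar{A}(\cO_{\mathbf{d}_{rs}})\,\times\,\bar{A}_{P,\mathbf{t}}\,\times\,\bar{A}_{{}^LP,{}^L\mathbf{t}},
\]
by classifying the generators of $\bar{A}(\cO_{\mathbf{d}_C})$ into three types: those inherited from $\bar{A}(\cO_{\mathbf{d}_{rs}})$, those created by $C$-collapses during the induction of $\cO_{\mathbf{d}_C}$, and those created by $B$-collapses during the dual induction of $\cO_{\mathbf{d}_B}$. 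The formula $\deg\nu\cdot\deg{}^L\nu=|\bar{A}(\cO_{\mathbf{d}_C})|/|\bar{A}(\cO_{\mathbf{d}_{rs}})|$ is then immediate, and the equality with the $B$-side is Lemma \ref{equiv-canonical-quotient} applied to both $\cO_{\mathbf{d}_C}$ and $\cO_{\mathbf{d}_{rs}}$, together with Proposition \ref{canonical-quotient-rigid-special}.

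The main obstacle is the combinatorial decomposition of $\bar{A}(\cO_{\mathbf{d}_C})$ in the third step. In the Richardson case of Section \ref{DSM}, the induced partition $\mathbf{d}$ has a clean "all odd on the left of a divider, all even on the right" shape, which lets Lemma \ref{l.quotient-group} isolate the generators contributed by each side of the divider. Here $\mathbf{d}_{rs}$ has arbitrary (rigid) parity pattern, and its repeated parts interact non-trivially with the added even parts from $\fu$ and with both $C$- and $B$-collapses. The argument must proceed by a careful parity-pattern case analysis, repeatedly invoking Lemma \ref{l.trick} to commute the shift $\mathbf{d}_C^+$ past the $B$-collapse, in order to show that the three types of generators genuinely split as a direct product rather than merely forming a composition series.
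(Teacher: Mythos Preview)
Your approach is genuinely different from the paper's. You propose to generalize the structural Richardson-case argument of Section \ref{DSM}: identify $\deg\nu$ as an index inside $\mathcal{A}(\cO_{\mathbf{d}_C})$, show $K(\cO_{\mathbf{d}_C})$ lies in the relevant subgroup, and then exhibit a direct-product decomposition of $\bar{A}(\cO_{\mathbf{d}_C})$. The paper does none of this. Instead it inserts a key lemma (Lemma \ref{degree-Springer-map}): by factoring the induction $\fl\hookrightarrow\g$ through a chain of \emph{elementary} inductions $\mathfrak{gl}_m\oplus\g'\hookrightarrow\g$ and invoking the explicit degree computation for each elementary step due to Hesselink \cite{He78} and Namikawa \cite{N19} (birational or degree $2$ according to whether a collapse occurs), one gets $\deg\nu=2^c$ and $\deg{}^L\nu=2^b$ directly, with $c,b$ the numbers of $C$- and $B$-collapses. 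The remaining task is then the purely numerical identity $q=a+b+c$, where $\bar{A}(\cO_{\mathbf{d}_C})\simeq\mathbb{Z}_2^q$ and $\bar{A}(\cO_{\mathbf{d}_{rs}})\simeq\mathbb{Z}_2^a$, established by a parity-pattern case analysis on $\mathbf{Even}+\mathbf{d}_{rs}$ using Proposition \ref{comb-q}.

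What each buys: the paper's collapse-counting route never touches subgroups of $\mathcal{A}(\cO_{\mathbf{d}_C})$ and reduces everything to partition bookkeeping, at the cost of importing the elementary-step degree results from \cite{He78, N19}. Your route, if carried out, would yield a genuine group decomposition refining the numerical seesaw, but you correctly flag the main obstacle, and the paper does not resolve it because it does not need to. Note also that your description of $\mathcal{A}_{P,\mathbf{t}}$ as ``generated by $\mathcal{A}_P$ together with the image of $\mathcal{A}(\cO_{\mathbf{d}_{rs}})$'' is not quite right: the subgroup you want is the image of $P_x/G_x^\circ$ for $x$ in the dense $P$-orbit of $\fu+\cO_{\mathbf{d}_{rs}}$, and pinning this down in Hesselink's generators essentially requires the elementary-induction analysis anyway.
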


Being induced orbits, there exists an even partition $\mathbf{Even}=[e_1, \cdots, e_m]$ (with $e_i \equiv 0, \forall i$) such that
 $\mathbf{d}_C=(\mathbf{Even} + \mathbf{d}_{rs})_C$ and $\mathbf{d}_B=(\mathbf{Even} + S(\mathbf{d}_{rs}))_B$. Let
	\begin{itemize}
		\item $c=$ number of collapses  $(\mathbf{Even} + \mathbf{d}_{rs} \rightarrow \mathbf{d}_C)$,
		\item $b=$ number of collapses $(\mathbf{Even} + S(\mathbf{d}_{rs}) \rightarrow \mathbf{d}_B)$.
	\end{itemize}
	
\begin{lemma} \label{l.GeneralizedSpringerMapDegree}
Under the above notations, we have
	\begin{align*}
		\deg \nu = 2^c, \quad \deg {}^L\nu = 2^b.
	\end{align*}
\end{lemma}
\begin{proof}
	Recall that  induction is transitive, i.e., if $\fl_1 \subset \fl_2 \subset \g$ are two Levi subalgebras, then
	\begin{align*}
		\operatorname{Ind}_{\fl_2}^{\g }(\operatorname{Ind}_{\fl_1}^{\fl_2}(\cO_{\fl_1})) = \operatorname{Ind}_{\fl_1}^{\g}({\cO_{\fl_1}}).
	\end{align*}
	Now we consider elementary inductions in the sense of Section 2.3 in \cite{Los18}. Let
	\begin{align*}
		\cO_{t}=(0, \cO_{\mathbf{d}}) \subset \mathfrak{gl}_{m} \oplus \mathfrak{so}_{2(n-m)+1} := \fl,
	\end{align*}
	where $\mathbf{d}=[d_1,\cdots, d_l]$. Then $\cO = \operatorname{Ind}_{\fl}^{\g}(\cO_t)$ has partition
	\begin{itemize}
		\item[(a)] either $[d_1+2, \cdots, d_m+2, d_{m+1}, \cdots,d_{l}]$, if this is a partition of type $B$,
		\item[(b)] or   $[d_1+2, \cdots, d_{m-1}+2 , d_m+1, d_{m+1}+1, d_{m+2} \cdots,d_{l}]$, otherwise.
	\end{itemize}
	
	Note that the induction of (a) gives a birational generalized Springer map by Claim (3.6.1) \cite{N19} (see also \cite[Theorem 7.1(d)]{He78}). Notice that (b) is the $B$--collapse of (a). Indeed, (b) happens exactly for $d_m=d_{m+1}$ is even and there exist an even number $i$ (may be 0) and an odd number $j$, such that.
	\begin{align*}
		d_{m-i}=\cdots=d_m=d_{m+1}=\cdots=d_{m+j}.
	\end{align*}
	Hence type (b) appears exactly when $\cO$ has a partition of the form
	\begin{align*}
		[d_1+2, \cdots, d_{m-1}+2, (d_{m}+1)^2, d_{m+2}, \cdots,d_l],
	\end{align*}
	i.e., there exists an odd part with multiplicity 2. This induction is of degree 2 by   \cite[Claim (3.6.2)]{N19} (see also \cite[Theorem 7.1(d)]{He78}).
	
	Similarly, for type $C$, (b) occurs only when $d_m=d_{m+1}$ is odd and there exist an even number $i$ (may be 0) and an odd number $j$, such that
	\begin{align*}
		d_{m-i}=\cdots=d_m=d_{m+1}=\cdots=d_{m+j}.
	\end{align*}
	Thus $\cO$ has partition $[d_1+2, \cdots, d_{m-1}+2, (d_{m}+1)^2, d_{m+2}, \cdots,d_l]$, and this induction gives a generalized Springer map  of degree 2 by  \cite[Claim (2.3.2)]{N19} (see also \cite[Theorem 7.1(d)]{He78}).
\end{proof}

Similar to the Richardson orbit, we have the following description of Lusztig's canonical quotient for the induced orbit: given an induction pair $(P, \cO_{\bf t})$, let $\cO= \Ind_{\fl}^{\g}(\cO_{\bf t}) $. Then
\begin{align*}
	 G \times^P (\fu + \overline{\cO}_{\bf t}) \longrightarrow  \overline{\cO}. 
\end{align*}
Here $\cO_{\bf t}$ could be any special orbit, not necessarily rigid special. 

\begin{lemma} \label{l.induceCanonicalQuotient_C}
Under the natural inclusion $\mathcal{A}(\cO_{\bf t}) \hookrightarrow \mathcal{A}(\cO) $, we have $\bar{A}(\cO_{\bf t})$ is a subgroup of $\bar{A}(\cO)$.
\end{lemma}
\begin{proof}
We prove for type $C$, as it is similar for type $B$. Without loss of generality, we assume the partition of $\cO_{\bf t}$ has the following form
\begin{align*}
	\mathbf{t} = [ \mathbf{t}_0, \mathbf{t}_1, \cdots, \mathbf{t}_{2r} ],
\end{align*} 
where
\begin{align*}
	& \mathbf{t}_j = [ t_{j,1}, \cdots, t_{j,N_j} ], \quad t_{j,1} \equiv \cdots \equiv t_{j,N_j} \equiv j, \quad \text{for} \quad 0 \leq j \leq 2r-1, \\
	& \mathbf{t}_{2r} = [ t_{2r,1} \equiv \cdots \equiv t_{2r, N_{2r}} \equiv 0]. 
\end{align*}
Since $\cO_{\bf t}$ is special, then $N_j \equiv 0$, for $0 \leq j \leq 2r-1$. 

Let $\mathbf{Even}$ be the partition induced from $P$ as in (\ref{induction}) of Section 2.3. Let
\begin{align*}
	\mathbf{d} = \mathbf{Even} + \mathbf{t} = [\mathbf{d}_0, \cdots, \mathbf{d}_{2r}], \quad \mathbf{d}_j = [ d_{j,1}, \cdots, d_{j,N_j} ].
\end{align*}
Then $\mathbf{d}_C$, the $C$--collapse of $\mathbf{d}$, is the partition of $\cO$. Recall $\mathcal{A}(\cO_{\bf t})$ is indexed by $B(\bf{t})= \{ t_{2j,i} \mid t_{2j,i} > t_{2j,i+1}, 0 \leq j \leq 2r \}$, and $\mathcal{A}(\cO)$ is indexed by $B(\bf{d}_C)$. Note that adding $\mathbf{Even}$ won't change the parity but may change the size relation of the adjacent parts. More preciously, if $t_{2j,i} > t_{2j,i+1}$, then $d_{2j,i} > d_{2j, i+1}$. Furthermore, $B(\bf{d}_C)$ may contain elements in $\mathbf{d}_{2m+1}$ for some $m$, since the $C$--collapse happens at $d_{2m+1, 2h+1} $ where $d_{2m+1, 2h+1} > d_{2m+1, 2h+2} $, and makes $d_{2m+1, 2h+1} \mapsto d_{2m+1, 2h+1}-1 \equiv 0 $, $d_{2m+1, 2h+2} \mapsto d_{2m+1, 2h+2}+1 \equiv 0 $. Then we conclude that $\mathcal{A}(\cO_{\bf t})$ is a subgroup of $\mathcal{A}(\cO)$.

From Lemma \ref{l.EqualityOfCanonicalQuotient} and Remark \ref{r.CombCanonicalQuotient}, we know that $\bar{A}(\cO_{\bf t})$ consists of $\{d_{2j, 2l} \}$ in $\mathbf{t}_{2j}$, for $0 \leq j \leq r$ and $1 \leq l \leq \left \lceil N_{2j}/2 \right \rceil -1 $, such that $ t_{2j, 2l} \neq t_{2j, 2l+1} $. Here $\left \lceil N_{2j}/2 \right \rceil$ means taking the smallest integer bigger than $N_{2j}/2$. Similarly, if $t_{2j, 2l} \neq t_{2j, 2l+1}$ in $\mathbf{t}_{2j}$, then $d_{2j, 2l} \neq d_{2j, 2l+1}$ in $\mathbf{d}_{2j}$, for $0 \leq j \leq r$. However, if $t_{2j, 2h}=t_{2j, 2h+1}$, for some $h$, it may happen that $d_{2j, 2h} > d_{2j, 2h+1}$, then $d_{2j, 2h}$ contributes to $\bar{A}(\cO)$. Besides, to obtain $\bar{A}(\cO_{\bf t})$, we remove all $\mathbf{t}_{2j+1}$, $0 \leq j \leq r-1$, since $t_{2j+1, 2l+1} = t_{2j+1, 2l+2}$ for all $0 \leq l \leq N_{2j+1}/2-1 $. However, after adding $\mathbf{Even}$, it may happen $d_{2j+1, 2h+1} > d_{2j+1, 2h+2}$ for some $h$. Then after $C$--collapse, $d_{2j+1, 2h+2}+1$ contributes to $\bar{A}(\cO)$. Finally, according to Remark \ref{r.CombCanonicalQuotient}, all the generators of $\bar{A}(\cO_{\bf t})$ contribute to $\bar{A}(\cO)$. Thus, we arrive at a conclusion.
\end{proof}

Let $\mathbf{d}=[d^\prime_1, d^\prime_2, \cdots ]$ be the induced partition as (\ref{induction}) and let
\begin{align*}
	I_{\chi}(\mathbf{d})=\{ j \in \mathbb{N} \mid j \equiv  d^{\prime}_j \equiv \chi,\  d^{\prime}_j \geq d^{\prime}_{j+1}+2 \}.
\end{align*}
The set $I_\chi(\mathbf{d})$ contains the parts where collapses ($\mathbf{Even}+\mathbf{t} \rightarrow \mathbf{d}_\varepsilon$ ) happen. Given an induction pair $(P, \cO_{\bf t})$ of type $C$. Let $\mathcal{A}_{P, \cO_{\bf t}}$ be a subgroup of $\mathcal{A}(\cO)$ consisting of $x=x_{i_k} + \cdots + x_{i_1} \in \mathcal{A}(\cO)$ where $x_{d_j}$ and $x_{d_{j+1}}$ appear simultaneously whenever $j \in I_{\chi}(\mathbf{d})$. Denote by $K(\cO)$ the kernel of the canonical quotient $\mathcal{A}(\cO) \rightarrow \bar{A}(\cO)$. Then we have

\begin{proposition} \label{p.induce-pair-group}
	$K(\cO)$ and $\bar{A}(\cO_{\bf t})$ are subgroups of $\mathcal{A}_{P, \cO_{\bf t}}$, and the similar results hold for the Springer dual orbit.
\end{proposition}
\begin{proof}
	The first claim is similar to Proposition \ref{p.A_P}, which we omit here. The second claim comes from the definition $\mathcal{A}_{P, \cO_{\bf t}}$ and description of $\bar{A}(\cO_{\bf t})$ in Lemma \ref{l.induceCanonicalQuotient_C}.
\end{proof}

Denote by $\bar{A}_{P, \cO_{\bf t}}$ the quotient $\mathcal{A}(\cO) / \mathcal{A}_{P, \cO_{\bf t}}$, by Lemma \ref{l.induceCanonicalQuotient_C} and Proposition \ref{p.induce-pair-group}, it is a subgroup of $\bar{A}(\cO)/ \bar{A}(\cO_{\bf t})$. Similar notations apply for the dual induced orbit, i.e., $\mathcal{A}({}^S\cO)$, $\bar{A}_{{}^LP, {}^S\cO_{\bf t}}$, etc. Then we have the following 
\begin{proposition} \label{p.seesaw}
	Under the isomorphism $\bar{A}(\cO)/ \bar{A}(\cO_{\bf t}) \cong  \bar{A}({}^S\cO)/ \bar{A}({}^S\cO_{\bf t})$, we have $\bar{A}(\cO)/ \bar{A}(\cO_{\bf t}) \cong \bar{A}_{P, \cO_{\bf t}} \times \bar{A}_{{}^LP, {}^S\cO_{\bf t}}$.
\end{proposition}
\begin{proof}
	The isomorphism comes from Lemma \ref{l.EqualityOfCanonicalQuotient} and Lemma \ref{l.induceCanonicalQuotient_C}.  Without loss of generality, we assume $\cO$ is of type $C$, and we follow the notations in the proof of Lemma \ref{l.induceCanonicalQuotient_C}, i.e., the partition of $\cO$ is given by the $C$--collapse of
	\begin{align*}
	    \mathbf{d} = \mathbf{Even} + \mathbf{t} = [\mathbf{d}_0, \cdots, \mathbf{d}_{2r}], \quad \mathbf{d}_j = [ d_{j,1}, \cdots, d_{j,N_j} ].
	\end{align*}
	Let
	\begin{align*}
		I_{\rm odd} &= \{ d_{2j+1, 2h+2} \mid d_{2j+1,2h+1} > d_{2j+1,2h+2}, \ \text{for} \ 0 \leq j \leq r-1,\ 0 \leq h \leq N_{2j+1}/2-1 \}, \\
		I_{\rm even} &= \{ d_{2j, 2h} \mid d_{2j,2h} > d_{2j,2h+1}, \ \text{for} \ 1 \leq j \leq r,\ 1 \leq h \leq \left \lceil N_{2j}/2 \right \rceil -1 \}.
	\end{align*}
	Here $\left \lceil N_{2j}/2 \right \rceil$ means taking the smallest integer bigger than $N_{2j}/2$. From the proof of Lemma \ref{l.induceCanonicalQuotient_C}, the generators of $\bar{A}(\cO)/ \bar{A}(\cO_{\bf t})$ are indexed by $I_{\rm odd}$ and $I_{\rm even}$. By the definition of $\mathcal{A}_{P, \cO_{\bf t}}$, it is easy to obtain that the generators of $\bar{A}_{P, \cO_{\bf t}}$ are indexed by $I_{\rm odd}$, and the generators of $\bar{A}_{{}^LP, {}^S\cO_{\bf t}}$ are indexed by $I_{\rm even}$, which concludes the proof.
	
\end{proof}

Then we conclude that Proposition \ref{p.inducedCanonicalQuotient} follows from Lemma \ref{l.induceCanonicalQuotient_C}, Proposition \ref{p.induce-pair-group} and \ref{p.seesaw}. \\

Now we can prove Theorem \ref{main-theorem-1}. \\

\begin{proof}
	By \cite[Section 3]{L-S},  the induction behaves well under Langlands duality, which shows the first part of the claim. The second part comes from Lemma \ref{l.GeneralizedSpringerMapDegree} and Proposition \ref{p.seesaw}.
\end{proof}

Now we conclude the proof of Theorem \ref{t.induced}: (1) follows from Proposition \ref{canonical-quotient-rigid-special}, (2) follows from Lemma \ref{l.EqualityOfCanonicalQuotient}, while  (3) and (4) follow from Theorem \ref{main-theorem-1}.

\section{Asymmetry of footprint}

In this section, we investigate the importance of the range of degrees of (generalized) Springer maps, which we call footprint. We first show how to obtain the footprint combinatorially for Richardson orbits and then provide a counter-example to show that the mirror symmetry may fail outside the footprint. Finally, we produce another example of rigid special orbits to show the asymmetry in a broader situation, which motivates our formulation of the mirror symmetry conjecture (Conjecture \ref{conj}.) for special orbits.

\subsection{Footprint: the range of degrees of Springer maps}

Recall that standard parabolic subgroups in $G$ are in bijection with subsets of simple roots. We denote by $P_{\Theta}$ the standard parabolic subgroup whose Levi subalgebra is generated by $\g_\alpha$,  with $\alpha \in \mathfrak{h} \cup \langle \Theta \rangle$.
Two standard parabolic subgroups $P_{\Theta}$ and $P_{\Theta^{\prime}}$ are said \emph{$R$--equivalent}, denoted by $P_{\Theta} \sim_R P_{\Theta^{\prime}}$, if $P_{\Theta}$ and $P_{\Theta^{\prime}}$ are both polarizations of the same Richardson orbit.  The following result from \cite[Theorem 3]{Hi81} gives  the fundamental relations for the $R$--equivalence in case of type $B_n$ or $C_n$:
\begin{proposition} \label{p.Hirai}
Let $P_k$ be the maximal parabolic subgroup associated with the $k$--th simple root. Assume $\g$ is of type $B_n$ or $C_n$.  Then the $R$--equivalence is generated by the following:

(i) in $A_\ell$, $P_k \sim_R P_{\ell+1-k}$;

(ii) in $B_{3k-1}$ or $C_{3k-1}$, $P_{2k-1} \sim_R P_{2k}$.
\end{proposition}

The degree of any Springer map in classical types is computed in \cite{He78}, from which we can deduce that  (see also \cite[Lemma 5.4]{Fu4}), for type $B_{3k-1}$ (resp. $C_{3k-1}$), the Springer map associated to $P_{2k-1} $ (resp. $P_{2k}$) is birational, and that associated to  $P_{2k}$ (resp. $P_{2k-1}$) is of degree 2.  By \cite[Proposition 5.7]{Fu4}, the degree of the Springer map remains the same if we perform the equivalence (i), and it doubles (or reduces to half) if we perform the equivalence (ii). It follows that for any Richardson orbit $\0$ of type $B$ or $C$,  there exist some non-negative integers $m_{in}, m_{ax}$ (depending on $\0$) such that
$$
\{ {\rm deg}(\nu_P) | P \in {\rm Pol}(\0)\} = \{2^{m_{in}}, 2^{m_{in}+1}, \cdots, 2^{m_{ax}}\}.
$$

It remains to determine the integers $m_{in}, m_{ax}$ in terms of the partition ${\bf d}$ of $\0$.
Consider first the case of type $C$.
Being a Richardson orbit, its partition has the following form
 \begin{align*}
\mathbf{d}_C = [\underbrace{d_1\equiv d_2, \cdots, d_{2k-1} \equiv  d_{2k}\equiv 1}_{\text{if}\  d_{2j} \equiv d_{2j+1} \equiv 0, \text{ then} \  d_{2j} \geq d_{2j+1} +2}, d_{2k+1} \equiv \ldots \equiv  d_{2r+1}\equiv 0 ],
\end{align*}
where $d_{2r} \geq 2$ and $d_{2r+1}$ may be zero.

Define the following two non-negative integers:
\begin{itemize}
\item $2\alpha=$ number of even parts in $d_1, d_2, \cdots, d_{2k}$.
\item $m$ = the unique number   such that
\begin{itemize}
\item For $0 \leq j < m $,	 $d_{2k+2j+1}\geq d_{2k+2j+2} > d_{2k+2j+3}$,
\item For $j=m$,	 $d_{2k+2m+1}\geq d_{2k+2m+2} = d_{2k+2m+3}$, or $d_{2k+2m+1}=d_{2r+1}$.
\end{itemize}
\end{itemize}

\begin{lemma} \label{C-R}
Under previous notations, we have $m_{in} = \alpha$ and $m_{ax} = \alpha+m$. In other words,
$$
\{ {\rm deg}(\nu_P) | P \in {\rm Pol}(\0)\} = \{2^\alpha, 2^{\alpha+1}, \cdots, 2^{\alpha+m}\}.
$$
\end{lemma}
\begin{proof}
     Let $P$ be a polarization of $\cO$ with Levi type $(p_1, \ldots, p_l ;2q)$, then the induced partition is given by 
	\begin{align*}
		\mathbf{d}(P) = [d_1^{\prime} \equiv \dots \equiv d_{2q}^{\prime} \equiv 1,  d_{2q+1}^{\prime} \equiv \dots \equiv d_{2s+1}^{\prime}\equiv 0 ].
	\end{align*}
	As $P$ is a polarization, we have $\mathbf{d}_C = (\mathbf{d}(P))_C$.  Note that $C$-collapse happens only among the first $2q$ parts of $\mathbf{d}_C$, and each $C$-collapse introduces two even numbers. It follows that we need to perform at least $\alpha$ collapses to arrive at $\mathbf{d}_C$, which shows that $\deg(\nu_P) \geq 2^\alpha$ by Lemma \ref{l.GeneralizedSpringerMapDegree}.

Let $I = \{1 \leq j \leq k-1| d_{2j-1} \equiv d_{2j} \equiv 0\}$, which has cardinality $\alpha$.  We define
$\mathbf{d'}$ as follows:
\begin{align*}
d'_{2j-1} = d_{2j-1}+1, d'_{2j} = d_{2j}-1, \text{if } \ j \in I, \quad  \text{and} \ d'_i = d_i \ \text{otherwise}.
\end{align*}

Note that if $d_{2j} \equiv d_{2j+1} \equiv 0$, then  $d_{2j} \geq d_{2j+1} +2$, hence $\mathbf{d'}$ is again a partition. Moreover
$d'_1 \equiv \cdots \equiv d'_{2k} \equiv 1$ and $d_{2k+1} \equiv \cdots \equiv d_{2n} \equiv 0$.  It follows that there exists a parabolic subgroup $P' \in {\rm Pol}(\0)$ with $\mathbf{d'}$ being the induced partition. As $\mathbf{d}_C$ is obtained from $\mathbf{d'}$ by performing $\alpha$ $C$-collapses, we have 
$\deg(\nu_{P'})=2^\alpha$ by Lemma \ref{l.GeneralizedSpringerMapDegree}, hence $m_{in} = \alpha$.

	If $d_{2k+1} \geq d_{2k+2} > d_{2k+3}$,  we can obtain $\mathbf{d'}$ by one collapse from the following partition
	\begin{align*}
		\mathbf{d}^{\prime\prime} = [ d_1^{\prime} \equiv \dots \equiv d_{2k}^{\prime} \equiv d_{2k+1}^{\prime}+1 \equiv d_{2k+2}^{\prime}-1 \equiv 1,  d^{\prime}_{2k+3} \equiv \dots \equiv d_{2r+1}^{\prime}\equiv 0].
	\end{align*}
The corresponding parabolic subgroup $P^{\prime\prime}$ satisfies $\deg(\nu_{P^{\prime\prime}})=2 \deg(\nu_{P'}) = 2^{\alpha+1}$.
We can repeat this construction to arrive at a parabolic subgroup whose Springer map has degree $2^{\alpha+m}$, which is the maximal degree. 
\end{proof}

The partition  $\mathbf{d}_B= S(\mathbf{d}_C)$ of the Springer dual orbit ${}^S \cO$ is of the following form (by abuse of notation)
 \begin{align*}
\mathbf{d}_B=[d_1 \equiv d_2 \equiv \cdots \equiv d_{2l-1} \equiv 1 ,  \underbrace{d_{2l} \equiv d_{2l+1}\equiv 0, \ldots, d_{2j} \equiv d_{2j+1}, \cdots, d_{2n+1}}_{\text{if}\  d_{2j-1} \equiv d_{2j} \equiv 1, \text{ then} \  d_{2j-1} \geq d_{2j} +2}].
\end{align*}

We define the following two non-negative integers:
\begin{itemize}
\item $2\beta=$ number of odd parts in $d_{2l+2}, \cdots, d_{2n+1}$.
\item $m^\prime$ = the unique number  such that
\begin{itemize}
\item For $0 \leq j < m^\prime $,	 $d_{2l-2j-1} \leq d_{2l-2j-2} < d_{2l-2j-3}$,
\item For $j=m^\prime$,	 $d_{2l-2m^\prime-1} \leq d_{2l-2m^\prime-2} = d_{2l-2m^\prime-3}$, or $d_{2l-2m^\prime-1}=d_{1}$.
\end{itemize}
\end{itemize}

By a similar argument as that for Lemma \ref{C-R}, we have
\begin{lemma} \label{B-R}
	Under previous notations, we have $m_{in} = \beta$ and $m_{ax} = \beta+m' $.   In other words,
$$
\{ {\rm deg}(\nu_P) | P \in {\rm Pol}(\0)\} = \{2^\beta, 2^{\beta+1}, \cdots, 2^{\beta+m'}\}.
$$
\end{lemma}

Note that by Corollary \ref{c.DegreeDuality}, we have $m=m'$. With the above notations and Lemmas, we obtain the following result on the degree range  

\begin{proposition} \label{degree-range}
The set of  degrees of the parabolic cover $\mu_P: X_P \to \overline{\0}$ and its Springer dual $\mu_{{}^LP}: X_{{}^LP} \to \overline{{}^S\0}$ are given by the following
\begin{align*}
	\{(\deg \mu_P, \deg \mu_{{}^L P}) | P \in {\rm Pol}(\0)\} = \{(2^\beta, 2^{\alpha+m}), (2^{\beta+1}, 2^{\alpha+m-1}), \cdots, (2^{\beta+m}, 2^\alpha)\}.
\end{align*}
Moreover, we have $\widebar{A}(\cO)\simeq \widebar{A}({}^S\cO) \simeq \mathbb{Z}_2^{\alpha+\beta+m}$.
\end{proposition}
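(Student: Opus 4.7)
The plan is to combine Theorem \ref{t.Richardson}(2) with Lemmas \ref{C-R} and \ref{B-R}; all of the real combinatorial work has already been done in those lemmas, so what remains is essentially a bookkeeping exercise. Theorem \ref{t.Richardson}(2) already yields that
\[
\deg \mu_P \cdot \deg \mu_{{}^L P} = |\bar A(\0)| = |\bar A({}^S\0)|
\]
is a constant, independent of $P \in {\rm Pol}(\0)$, and Stein factorization contributes only a birational map, so $\deg \mu_P = \deg \nu_P$ and $\deg \mu_{{}^L P} = \deg \nu_{{}^L P}$. Hence every degree pair lies on the hyperbola $xy = |\bar A(\0)|$, and determining the full set reduces to enumerating the range of a single coordinate.

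For the endpoints I would invoke Lemmas \ref{C-R}(1) and \ref{B-R}(1): the former says the minimum of $\deg \nu_P$ is $2^\alpha$, attained when the dividing line in the induced partition has been pushed as far to the left as possible, while the latter says the minimum of $\deg \nu_{{}^L P}$ is $2^\beta$. Under the Langlands-dual bijection $P \leftrightarrow {}^L P$ of Proposition \ref{p.Springer-Richardson} the two minima are realized at opposite ends of ${\rm Pol}(\0)$. For the interior I would use Lemmas \ref{C-R}(2) and \ref{B-R}(2) together with the equality $m = m'$: these state that the equivalence relation (\ref{R-equiv}) connects ${\rm Pol}(\0)$ into a single chain of length exactly $m$. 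By the proofs of those lemmas together with Lemma \ref{degree-Springer-map}, each step of the chain moves the dividing line past one equal-pair block, inserting one extra $C$-collapse while removing one $B$-collapse (or conversely), so that $\deg \nu_P$ is multiplied by $2$ and $\deg \nu_{{}^L P}$ is halved (consistent with the product identity). Traversing the chain from the $\deg \mu_P$-minimum to the $\deg \mu_{{}^L P}$-minimum thus produces $m+1$ pairs
\[
(2^\alpha, 2^{\beta+m}),\ (2^{\alpha+1}, 2^{\beta+m-1}),\ \ldots,\ (2^{\alpha+m}, 2^\beta),
\]
which is the claimed set (up to the order in which the two coordinates are listed). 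Taking the product on any element of this chain forces $|\bar A(\0)| = 2^{\alpha+\beta+m}$, and since $\bar A(\0)$ is an elementary abelian $2$-group and $\bar A(\0) \simeq \bar A({}^S\0)$ by Lemma \ref{equiv-canonical-quotient}, we conclude $\bar A(\0) \simeq \bar A({}^S\0) \simeq (\mathbb{Z}/2\mathbb{Z})^{\alpha+\beta+m}$.

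The main point that still needs careful verification is that no polarization of $\0$ lies outside the single $R$-chain identified above; this amounts to knowing that (\ref{R-equiv}) is the only non-trivial $R$-equivalence relation in types $B_n$ and $C_n$ (the type-$A$ relations in Hirai's list merely permute the ordering of $\mathfrak{gl}$-blocks inside the Levi, which leaves the induced orbit and its Springer map unchanged). Once this exhaustion is established the enumeration above is complete, and the proposition follows.
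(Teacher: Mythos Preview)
Your proposal is correct and follows essentially the same route as the paper: the paper's own proof is the one-line ``Together with Lemma~\ref{C-R} and \ref{B-R} and Theorem~\ref{main-theorem-1}, we arrive at the conclusion,'' and you have simply unpacked what that sentence means---using the seesaw identity to reduce to a single coordinate, reading off the two minima from Lemmas~\ref{C-R}(1) and~\ref{B-R}(1), and filling in the intermediate values via the $R$-equivalence chain of length $m=m'$ from parts~(2) of those lemmas.

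Two minor remarks. First, your listing $(2^\alpha,2^{\beta+m}),\ldots,(2^{\alpha+m},2^\beta)$ has the roles of $\alpha$ and $\beta$ exchanged relative to the displayed statement; this is not an error in your reasoning, since Lemma~\ref{C-R} genuinely gives $\min\deg\nu_P=2^\alpha$ for the $C$-side and Lemma~\ref{B-R} gives $\min\deg\nu_{{}^LP}=2^\beta$ for the $B$-side, so your version is what the lemmas actually yield. Second, the ``exhaustion'' worry you flag---that every polarization lies in the single chain---is handled by the fact that Lemmas~\ref{C-R} and~\ref{B-R} already identify both the minimum degree and the maximal chain length $m$ on each side; combined with the constancy of the product, this pins down the maximum as well, so the $m+1$ values you list are forced to coincide with the full range. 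The paper leaves this implicit, and your spelling it out is an improvement rather than a gap.
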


\begin{remark}
Here is a simple example showing that the ranges of $\deg \mu_P$ and $\deg \mu_{{}^L P}$ are NOT symmetric, i.e., $\alpha \neq \beta$ in general.  The orbit $\0_C = \0_{[4^2,1^2]} \subset \mathfrak{sp}_{10}$ has $c=1$ with  a polarization of degree 2, while its dual orbit $\0_B=\0_{[5,3,1^3]} \subset \mathfrak{so}_{11}$ has $b=0$ with  a birational polarization.
\end{remark}

\subsection{Stringy E-functions of special spherical nilpotent orbits}\label{sph-orb}

\subsubsection{Log resolutions for special spherical nilpotent orbits}

For a nilpotent element $x\in \mathfrak{g}$, take an $\mathfrak{sl}_2$--triplet $(x, y, h)$. Then we can write $\mathfrak{g}$ as an $\mathfrak{sl}_2$--module
$$
\mathfrak{g} = \oplus_{j=-m}^m \mathfrak{g}_j, \ \text{with} \  \g_j=\{z \in \g \mid [h, z] = j z\} \  \text{and} \ \mathfrak{g}_{m} \neq 0.
$$
This number $m$ is called the height of the nilpotent orbit $\0_x = G \cdot x$, which can be computed directly from its weighted Dynkin diagram as follows.  If we denote by $c_i$ (resp. $c'_i$) the coefficient of the highest root (resp. the weight of the marked Dynkin diagram of $\0$) at the $i$--th simple root, then the height of $\0$ is given by $\sum_{i} c_i c'_i$.

Recall that a nilpotent orbit $\0 \subset \mathfrak{g}$ is said {\em spherical} if $\0$ contains an open $B$--orbit. It is shown in  \cite{Pan94} that a nilpotent orbit $\0$ is spherical if and only if its height is at most 3. In classical Lie algebras, spherical nilpotent orbits can be reformulated as
\begin{itemize}
\item[(1)] For $\mathfrak{sl}$ or $\mathfrak{sp}$,  $\0_x$ is spherical iff $x^2=0$.
\item[(2)] For $\mathfrak{so}$, $\0_x$ is spherical iff ${\rm rk}(x^2) \leq 1$.
\end{itemize}

As a corollary, a spherical nilpotent orbit in $\mathfrak{sp}_{2n}$ (resp. $\mathfrak{so}_{2n+1}$) corresponds to a partition of the form $[2^k,1^{2n-2k}]$ (resp. $[3,2^{2k-2}, 1^{2n+2-4k}]$) for some $k$. Restricting to special nilpotent orbits, we have

\begin{lemma}
For any two integers $r, l \geq 1$, the following  two orbits are special and spherical nilpotent orbits of types $B$ and $C$:
$$
\0_C:=\0_{r, l}^C := \0_{[2^{2r},1^{2l}]} \subset \mathfrak{sp}_{4r+2l}, \quad \quad  \0_B:=\0_{r, l}^B = \0_{[3,2^{2r-2},1^{2l+2}]} \subset \mathfrak{sp}_{4r+2l+1}.
$$
The two orbits have normal closures, and they are Springer dual and of dimension $4r^2+4rl+2r$.  They are rigid if and only if $r \geq 2$, and in this case, we have
$$
\pi_1(\0_B)=\bar{A}(\0_B) \simeq \bar{A}(\0_C) = \pi_1(\0_C) = \mathbb{Z}_2.
$$
\end{lemma}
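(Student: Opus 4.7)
The plan is to verify the six assertions by direct application of the combinatorial machinery of Section~2 to the partitions $\mathbf{d}_C=[2^{2r},1^{2l}]$ and $\mathbf{d}_B=[3,2^{2r-2},1^{2l+2}]$.

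First I check the geometric properties. Sphericity follows from the height criterion recalled above: in $\0_C$ every Jordan block has size at most $2$, so $x^2=0$; in $\0_B$ the unique size-$3$ block is the sole contribution to $x^2$, giving ${\rm rk}(x^2)=1$. Specialness is read off the parity criterion of Section~2.1: for $\mathbf{d}_C$ the only even value $2$ has even multiplicity $2r$ and lies above all odd parts; for $\mathbf{d}_B$ the single odd part $3$ lies above the largest even part $2$ with (odd) multiplicity $1$, and no odd parts separate consecutive $2$'s. Normality of the two closures is quoted from \cite{KP82}. To establish Springer duality I apply Proposition~\ref{p.Springerdual}: $\mathbf{d}_C^+=[3,2^{2r-1},1^{2l}]$ has the sole even value $2$ appearing with odd multiplicity $2r-1$, so one step of $B$-collapse replaces the last $2$ by $1$ and promotes a subsequent $0$ to $1$, producing $\mathbf{d}_B$. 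Spaltenstein's theorem then forces $\dim\0_C=\dim\0_B$, and substituting the transposes $(\mathbf{d}_C)^*=[2r+2l,2r]$ and $(\mathbf{d}_B)^*=[2r+2l+1,2r-1,1]$ into the standard centralizer dimension formulas for $\mathfrak{sp}$ and $\mathfrak{so}$ confirms the common value $4r^2+4rl+2r$.

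Next, rigidity is checked against the criterion of Section~2.4. The partition $\mathbf{d}_C$ has consecutive-integer support, and the only multiplicity at an even-indexed value is $m_2=2r$, so the forbidden equality $m_2=2$ is ruled out precisely when $r\geq 2$. The partition $\mathbf{d}_B$ fails the consecutive-support requirement when $r=1$ (the value $2$ is skipped), while for $r\geq 2$ the multiplicities $m_3=1$ and $m_1=2l+2\geq 4$ at the odd indices both avoid the forbidden value $2$. Conversely, when $r=1$ both orbits are Richardson with polarization the maximal parabolic $P_1$: the Levis $\mathfrak{gl}_1\oplus\mathfrak{so}_{2l+3}\subset\mathfrak{so}_{2l+5}$ and $\mathfrak{gl}_1\oplus\mathfrak{sp}_{2l+2}\subset\mathfrak{sp}_{2l+4}$ produce the induced partitions $[3,1^{2l+2}]=\mathbf{d}_B$ (no collapse needed) and $[3,1^{2l+1}]$ (whose $C$-collapse is $\mathbf{d}_C$) respectively.

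Finally, the component and fundamental groups are read from the tables of Section~2.5. Since $\0_C$ is very even with $a_{\text{even}}=1$, we obtain $\pi_1(\0_C)=A(\0_C)=\mathbb{Z}_2$; since $\0_B$ has $a_{\text{odd}}=2$ and is not rather odd (the odd value $1$ has multiplicity $2l+2>1$), we obtain $\pi_1(\0_B)=A(\0_B)=\mathbb{Z}_2$. For Lusztig's canonical quotient I run the pairing procedure of Proposition~\ref{comb-q}: in $\mathbf{d}_C$ one separates $r-1$ even pairs $(2,2)$ and $l$ odd pairs $(1,1)$, leaving three residual entries $(2,2,0)$, so $q=1$ and $\bar{A}(\0_C)=\mathbb{Z}_2$; the analogous pairing on $\mathbf{d}_B$ again leaves three residual entries $(3,1,1)$, giving $\bar{A}(\0_B)=\mathbb{Z}_2$, in agreement with Lemma~\ref{equiv-canonical-quotient}. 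The only subtle point throughout is keeping track of index positions in the pairing procedure, in particular the trailing zero allowed in $\mathbf{d}_C$; once these conventions are fixed the verification is entirely mechanical.
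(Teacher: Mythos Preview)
Your proof is correct and is precisely the kind of verification the paper leaves implicit: the lemma is stated without proof, relying on the combinatorics of Section~2 together with \cite{KP82} and \cite{Pan94}, and you have carried out exactly those checks. One small caveat: your specialness argument for $\mathbf{d}_B$ invokes the even part $2$, which is absent when $r=1$; in that edge case it is cleanest to verify specialness directly via the dual partition $[2l+3,1,1]\in\cP_1(2l+5)$.
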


The following observation (and its proof) is due to Paul Levy, which gives a generalization of the shared orbits of \cite{BK94} for the pair $(\mathfrak{so}_{2n+1}, \mathfrak{so}_{2n+2})$.
\begin{proposition} \label{p.levy}
Consider the following nilpotent orbit
$$
\0_D:=\0_{r, l}^D = \0_{[2^{2r},1^{2l+2}]} \subset \mathfrak{so}_{4r+2l+2}.
$$
Then there exists an ${\rm SO}_{4r+2l+1}$--equivariant double cover
$\overline{\0}_{r,l}^D \to \overline{\0}_{r,l}^B$.
\end{proposition}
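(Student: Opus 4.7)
The plan is to construct the double cover explicitly from the natural embedding $\mathfrak{so}_{4r+2l+1}\subset\mathfrak{so}_{4r+2l+2}$. Fix an orthogonal decomposition $V_D=V_B\oplus\mathbb{C}e$ with $V_B\cong\mathbb{C}^{4r+2l+1}$ and $\langle e,e\rangle=1$, so that $\SO(V_B)=\SO_{4r+2l+1}$ is the stabilizer of $e$ inside $\SO(V_D)=\SO_{4r+2l+2}$. Skew-symmetry with respect to the form forces any $\tilde y\in\mathfrak{so}(V_D)$ into the block form $\tilde y(v)=Av-\langle\beta,v\rangle e$ for $v\in V_B$ and $\tilde y(e)=\beta$, with a unique pair $(A,\beta)\in\mathfrak{so}(V_B)\times V_B$; the $(e,e)$-entry is forced to vanish. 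Define
\begin{equation*}
\phi:\overline{\cO}_{r,l}^D\longrightarrow\mathfrak{so}(V_B),\qquad\phi(\tilde y):=A.
\end{equation*}
Since $\SO(V_B)$ fixes $e$ and therefore preserves the decomposition, $\phi$ is manifestly $\SO_{4r+2l+1}$-equivariant.

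A direct expansion yields
\begin{align*}
\tilde y^2(v) &= A^2v-\langle\beta,Av\rangle e-\langle\beta,v\rangle\beta,\quad v\in V_B,\\
\tilde y^2(e) &= A\beta-\langle\beta,\beta\rangle e,
\end{align*}
so $\tilde y^2=0$ is equivalent to $A^2=\beta\otimes\beta$ (meaning $A^2v=\langle\beta,v\rangle\beta$), $A\beta=0$, and $\langle\beta,\beta\rangle=0$. Assume $\beta\neq 0$. Then $\beta\in\Im(A^2)\subset\Im(A)=(\ker A)^\perp$, so $\ker A\subset\beta^\perp$. Picking any $v_0$ with $Av_0=\beta$, the identity $\langle\beta,v_0\rangle\beta=A^2v_0=A\beta=0$ forces $\langle\beta,v_0\rangle=0$, which gives $\beta=\tilde y(v_0)\in\tilde y(V_B)$. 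Combined with $\ker(\tilde y|_{V_B})=\ker A\cap\beta^\perp=\ker A$, this yields the clean rank identity $\operatorname{rank}(\tilde y)=\operatorname{rank}(A)$.

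Applied to $\tilde y\in\cO_{r,l}^D$ (partition $[2^{2r},1^{2l+2}]$, rank $2r$), this forces $\operatorname{rank}(A)=2r$. Together with $A^3=(A\beta)\otimes\beta=0$ and $\operatorname{rank}(A^2)=1$, the only partition of $4r+2l+1$ compatible with these constraints is $[3,2^{2r-2},1^{2l+2}]$, so $\phi(\tilde y)\in\cO_{r,l}^B$ whenever $\beta\neq 0$. Conversely, for $y\in\cO_{r,l}^B$ the rank-one symmetric operator $y^2$ uniquely determines an isotropic vector $\alpha\in\ker y\cap\Im(y)$ up to sign with $y^2=\alpha\otimes\alpha$; reversing the construction with $(A,\beta)=(y,\pm\alpha)$ produces exactly two preimages $\tilde y_{\pm\alpha}\in\cO_{r,l}^D$. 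Hence $\phi$ is generically $2$-to-$1$ onto $\cO_{r,l}^B$.

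To finish, extend $\phi$ by continuity to $\overline{\cO}_{r,l}^D\to\overline{\cO}_{r,l}^B$. The image lies in $\overline{\cO}_{r,l}^B$ because on the exceptional locus $\{\beta=0\}\subset\overline{\cO}_{r,l}^D$ the image sits in the smaller orbit $\cO_{[2^{2r},1^{2l+1}]}\subset\mathfrak{so}(V_B)$, which is dominated in partition order by $[3,2^{2r-2},1^{2l+2}]$ and hence contained in $\overline{\cO}_{r,l}^B$. Since both closures are normal affine varieties of the same dimension (normality from \cite{KP82}) and the dominant morphism $\phi$ is generically of degree two, it is a finite double cover. The main obstacle in the argument is verifying the clean rank identity $\operatorname{rank}(\tilde y)=\operatorname{rank}(A)$ — ruling out the a priori alternatives of rank jumping by $1$ or $2$ — and then pinning down the partition $[3,2^{2r-2},1^{2l+2}]$ of $A$ directly from the algebraic relations $A^2=\beta\otimes\beta$ and $A\beta=0$.
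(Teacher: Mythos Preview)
Your proof is correct and follows essentially the same route as the paper's own argument: both write elements of $\overline{\cO}_{r,l}^D$ in block form relative to the splitting $V_D=V_B\oplus\mathbb{C}e$, extract the relations $A^2=\beta\otimes\beta$, $A\beta=0$, $\langle\beta,\beta\rangle=0$ from $\tilde y^2=0$, deduce $A\in\overline{\cO}_{r,l}^B$, and observe that $\beta$ is determined up to sign by $A^2=\beta\otimes\beta$. Your write-up is in fact more careful than the paper's on two points: you verify the rank identity $\operatorname{rank}(\tilde y)=\operatorname{rank}(A)$ explicitly (the paper only notes $\operatorname{rk}M\le\operatorname{rk}X$), and you pin down the exact partition $[3,2^{2r-2},1^{2l+2}]$ on the open stratum rather than just landing in the closure. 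Both arguments are equally terse about the passage from ``generically $2$-to-$1$'' to ``finite cover''; if you want to tighten this, note that $\phi$ is a $\mathbb{C}^*$-equivariant morphism of affine cones with $\phi^{-1}(0)=\{0\}$, hence finite by graded Nakayama.
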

\begin{proof}
Write an element $X$ of $\overline{\0}_{r, l}^D$ as
$$
\begin{pmatrix}  M & u\\
-u^T & 0 \end{pmatrix}
$$
where $M \in \mathfrak{so}_{4r+2l+1}$ and $u$ is a column vector. As $X \in \overline{\0}_{[2^{2r},1^{2l+2}]}$, we have $X^2=0$. This implies that $M^2=uu^T$ and  $Mu=0$, thus ${\rm rk}(M^2) \leq 1$ and $M^3=0$.  Moreover we have ${\rm rk M} \leq  {\rm rk X} \leq 2r$.  It follows that $M \in \overline{\0}_{r,l}^B$, which gives the ${\rm SO}_{4r+2l+1}$-equivariant map $\overline{\0}_{r,l}^D \to \overline{\0}_{r,l}^B$. As both have the same dimension, this is a surjective map. Note that the fiber over a general point $M$ has two points corresponding to $(M, u)$ and $(M, -u)$, where $u$ is determined (up to a sign) by $M^2=uu^T$, hence this is a double cover.
\end{proof}

Now in order to compute the stringy E-functions, we will construct log resolutions, which are miraculously related to the classical varieties of complete quadrics and skew forms (cf. \cite{Be97}, \cite{Th99}). Let us first recall the Jacobson-Morosov resolution associated to an $\mathfrak{sl}_2$-triplet and the weight decomposition $\g= \oplus_{j=-m}^m \g_j$. Consider the  parabolic subalgebra $\p = \oplus_{j \geq 0} \g_j$,  which corresponds to $\p_{\Theta}$ with $\Theta \subset \Delta$ being the set of simple roots where the weighted Dynkin diagram of $\0$ has non-zero weight. Let $\n = \oplus_{i \geq 2} \g_i$, which is a $\p$-module.  Let $P \subset G$ be a parabolic subgroup with Lie algebra $\p$.  The Jacobson-Morosov resolution is given by
\begin{align*}
	G \times^P \n \longrightarrow \overline{\cO}.
\end{align*}
Let $G_C := {\rm Sp}_{2n}$ and $G_D := {\rm SO}_{2n+2}$, then we have two resolutions:
\begin{align*}
	 G_C \times^{P_C} \n_C  \longrightarrow \overline{\cO}_C, \quad
	 G_D \times^{P_D} \n_D  \longrightarrow \overline{\cO}_D.
\end{align*}

The following result is from \cite[Table 1]{BP19}, and we reproduce the proof here for the reader's convenience.

\begin{proposition}
	Under the action of $P_C$ (resp. $P_D$), $\n_C$ (resp. $\n_D$) becomes an ${\rm SL}_{2r}$--module. Moreover,  there exist two vector spaces   $V_C \simeq V_D \simeq \mathbb{C}^{2r}$ such that
	\begin{align*}
		\n_C = \mathrm{Sym}^2 V_C  \quad \n_D = \wedge^2 V_D.
	\end{align*}
\end{proposition}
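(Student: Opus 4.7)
The plan is to identify $\n_C$ and $\n_D$ as the weight-$2$ piece of the adjoint representation under the chosen $\mathfrak{sl}_2$-triple, and then pin down the symmetry type of this piece via compatibility with the symplectic (resp.\ orthogonal) form.

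For $\0_C = \0_{[2^{2r},1^{2l}]}$, I would first decompose $\bC^{4r+2l}$ under the $\mathfrak{sl}_2$-triple: each Jordan block of size $2$ contributes weights $\pm 1$ and each block of size $1$ contributes weight $0$, so
\begin{align*}
\bC^{4r+2l} = V_+ \oplus W \oplus V_-, \qquad \dim V_\pm = 2r,\ \dim W = 2l,
\end{align*}
with the symplectic form $\omega$ pairing $V_+$ with $V_-$ nondegenerately and restricting to a symplectic form on $W$. Weights on $\mathfrak{sp}_{4r+2l} \subset \End(\bC^{4r+2l})$ therefore lie in $\{-2,-1,0,1,2\}$, and since the orbit has height $2$ one has $\n_C = \g_2$. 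The naive weight-$2$ piece of $\End(\bC^{4r+2l})$ is $\operatorname{Hom}(V_-, V_+)$; identifying $V_- \simeq V_+^*$ via $\omega$ gives $\operatorname{Hom}(V_-, V_+) \simeq V_+ \otimes V_+$. The condition $\omega(Xu,v) + \omega(u,Xv)=0$ for $u,v \in V_-$ becomes, under this identification, symmetry of the corresponding tensor, so $\g_2 \simeq \mathrm{Sym}^2 V_+$. Setting $V_C := V_+$ gives the stated identification.

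Next I would verify that the action factors through $SL_{2r}$. The unipotent radical $U_C$ has Lie algebra $\g_{\geq 1}$, and since $[\g_1,\g_2] \subset \g_3 = 0$, $U_C$ acts trivially on $\n_C$. The Levi is $L_C \simeq GL(V_C) \times Sp(W)$; the factor $Sp(W)$ acts trivially on $\mathrm{Sym}^2 V_C$ because $W$ does not appear, and restricting to $SL(V_C) = SL_{2r}$ recovers the module structure in the statement.

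The type $D$ case is strictly parallel. For $\0_D = \0_{[2^{2r},1^{2l+2}]}$ the same Jordan-block analysis gives $\bC^{4r+2l+2} = V_+ \oplus W \oplus V_-$ with $\dim V_\pm = 2r$ and $\dim W = 2l+2$, but now $W$ is a quadratic space under the orthogonal form $q$. The only change in the computation is the sign in the compatibility condition: $q(Xu,v) + q(u,Xv) = 0$ with $q$ symmetric forces $X$ to correspond to a skew-symmetric rather than symmetric tensor. Thus $\g_2 \simeq \wedge^2 V_+$, and the reduction of the $P_D$-action to $SL(V_D) = SL_{2r}$ goes through identically, with the Levi factor $SO(W)$ acting trivially on $\wedge^2 V_D$. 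The argument is almost pure linear algebra; the one spot that requires care is keeping the bilinear-form bookkeeping straight so that the sym/skew dichotomy separating $\mathfrak{sp}$ from $\mathfrak{so}$ emerges cleanly. Once this is in hand, the height-$\leq 2$ hypothesis does the rest.
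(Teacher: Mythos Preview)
Your argument is correct and reaches the same conclusion as the paper, but the route is genuinely different. The paper computes the weighted Dynkin diagram (weight $1$ at the $2r$-th node, $0$ elsewhere), reads off $\n_C = \g_2 = \bigoplus_{\alpha \in I} \g_\alpha$ with $I = \{2\varepsilon_i,\ \varepsilon_i+\varepsilon_j : 1\le i<j\le 2r\}$, and recognizes this root set as $\mathrm{Sym}^2 V_C$; in type $D$ the roots $2\varepsilon_i$ are absent, giving $\wedge^2 V_D$. Your approach instead works inside the standard representation: you decompose $\bC^N = V_+ \oplus W \oplus V_-$ by $h$-weight, identify $\g_2 \subset \operatorname{Hom}(V_-,V_+) \simeq V_+\otimes V_+$ via the form, and extract the symmetric versus skew part directly from the invariance condition $\omega(Xu,v)+\omega(u,Xv)=0$ (resp.\ with $q$). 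The paper's root bookkeeping makes the weight combinatorics explicit and ties cleanly to the later identification of $G/P$ with an isotropic Grassmannian; your bilinear-form argument is more intrinsic and explains \emph{why} the symmetry type flips between $\mathfrak{sp}$ and $\mathfrak{so}$ without appealing to the root system. One small completeness point: when you argue that $U_C$ acts trivially you cite $[\g_1,\g_2]\subset \g_3=0$, but the nilradical is $\g_1\oplus\g_2$, so you also need $[\g_2,\g_2]\subset\g_4=0$; this is of course immediate from height $\leq 2$.
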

\begin{proof}
Consider first the case $\0_C$, whose weighted Dynkin diagram has weight 1 at the $2r$--th node and zeros elsewhere. It follows that
$$
\mathfrak{n}_C = \mathfrak{g}_2^C = \oplus_{\alpha \in I} \mathfrak{g}_\alpha^C, \ {\text with} \ I=\{2\varepsilon_i^C, \varepsilon_i^C+\varepsilon_j^C, 1\leq i < j \leq 2r\}.
$$
It follows that $\mathfrak{g}^C_2 \simeq {\rm Sym}^2 V_C$, where $V_C = \oplus_{j=1}^{2r} \mathfrak{g}_{\varepsilon_j}^C$ is a vector space of dimension $2r$. The parabolic subgroup $P_C$ is the maximal parabolic associated with the $2r$--th root. Note that $[\mathfrak{u}_C, \mathfrak{n}_C]=0$ for the nilradical $\mathfrak{u}_C$ of $\mathfrak{p}_C$, so the unipotent part of $P_C$ acts trivially on $\mathfrak{g}_2^C$. The Levi part of $P$ is $L_C={\rm SL}_{2r} \oplus {\rm Sp}_{2l}$, with ${\rm Sp}_{2l}$ acts trivially on $\mathfrak{g}_2^C$ while ${\rm SL}_{2r}$ acts on $\mathfrak{g}_2^C \simeq {\rm Sym}^2 V_C$ in the natural way.

Similarly, for $\0_D$, its weighted Dynkin diagram has weight 1 at the $2r$--th node and zeros elsewhere.
This implies that
$$
\mathfrak{n}_D = \mathfrak{g}_2^D = \oplus_{\alpha \in J} \mathfrak{g}_\alpha^D, \ {\text with} \ J=\{\varepsilon_i^D+\varepsilon_j^D, 1\leq i < j \leq 2r\}.
$$
It follows that $\mathfrak{n}_D  \simeq \wedge^2 V_D$, where $V_D= \oplus_{j=1}^{2r} \mathfrak{g}_{\varepsilon_j}^D$ is a vector space of dimension $2r$. The remaining proof is similar to the type $C$ case.
\end{proof}

The Jacobson-Morosov resolution is generally not a log resolution, but we will construct a log resolution from it by successive blowups. For $i=0, 1, \cdots, 2r$, let us denote by $O^C_i \subset {\rm Sym}^2 V_C$ the set of elements of rank $i$ (viewing ${\rm Sym}^2 V_C$ as symmetric matrices of size $2r \times 2r$) ,  and denote by $\bar{O}^C_i$ its closure. Note that both $O^C_i$ and $\bar{O}^C_i$ are $P_{C}$-invariant. We remark that the Jacobson-Morosov resolution induces the following map on each stratum:
\begin{align*}
	G_C \times^{P_C} O_k^C \longrightarrow \0_k^C:=\cO_{[2^k, 1^{4r+2l-2k}]} \subset \overline{\0}_C= \bigcup_{k=0}^{2r} \0_k^C,\  k=0, \cdots, 2r.
\end{align*}

Consider the following birational map
$$
\phi: \widehat{\mathfrak{n}}_C \to \mathfrak{n}_C = {\rm Sym}^2 V_C
$$
obtained by successive blowups of $\mathfrak{n}_C =  {\rm Sym}^2 V_C$ along strict transforms of $\bar{O}^C_i$ from smallest $O^C_0 $ to the biggest $\bar{O}^C_{2r-2}$ (note that $\bar{O}^C_{2r-1}$ is the determinant hypersurface). As all are ${\rm SL}_{2r}$--equivariant, this gives the following birational map.
$$
\Phi:  \widehat{Z}_C:=G_C \times^{P_C} \widehat{\mathfrak{n}}_C \to Z_C:=G_C \times^{P_{C}} \mathfrak{n}_C \to  \overline{\0}_{r,l}^C.
$$
Let us denote by $\D_i^C$ the exceptional divisor of $\Phi$  over $\overline{\0}_i^C$ for $i=0, \cdots, 2r-1$.
Let us denote by $D_{2r-1}:=G_C \times^{P_C} \bar{O}_{2r-1}^C$, which is the exceptional divisor in $Z_C$.
Note that $\D_{2r-1}^C \to D_{2r-1}$ is birational.

\begin{proposition} \label{p.logresolutionC}
The morphism $\Phi$ is a log resolution for $\overline{\0}_{r,l}^C$, and we have
$$
K_{\widehat{Z}_C} = 2l  \D^C_{2r-1} + \sum_{j=0}^{2r-2} {\big (}\cfrac{(2r-j)(2r+1-j)}{2}-1 {\big )} \D^C_j.
$$
\end{proposition}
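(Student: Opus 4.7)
The proof plan has three stages. First, I would verify that $\widehat{\n}_C \to \n_C$ is a log resolution by appealing to the classical theory of varieties of complete quadrics (Semple, Vainsencher, De Concini--Procesi). The key input is that the successive blowups along the strict transforms of $\bar{O}^C_0, \bar{O}^C_1, \ldots, \bar{O}^C_{2r-2}$ produce a smooth variety, each strict transform in turn being smooth in the ambient at the time it is blown up and retaining its original codimension $\binom{2r-j+1}{2}$; moreover, the boundary divisors --- the exceptional divisors together with the strict transform of the determinant hypersurface $\bar{O}^C_{2r-1}$ --- form a simple normal crossings divisor. Since $\widehat{Z}_C = G_C \times^{P_C} \widehat{\n}_C$ is the associated $G_C$-equivariant bundle over $G_C/P_C$ with smooth fiber $\widehat{\n}_C$, it inherits smoothness and the SNC exceptional divisor $\bigcup_j \D^C_j$.

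Second, I would establish birationality of $\Phi$, which reduces to checking that the Jacobson--Morozov map $Z_C \to \overline{\0}_C$ is birational (the blowup factor being tautologically birational). For the spherical orbit $\0_C$ of height $2$, a direct analysis of the generic fiber using the $\mathfrak{sl}_2$-triple attached to a nilpotent representative shows it is a single point, which is the standard JM statement in this spherical setting.

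Third, I would compute $K_{\widehat{Z}_C}$. Since $\overline{\0}_C$ has symplectic (hence canonical) singularities, $K_{\widehat{Z}_C}$ equals the sum of discrepancies $a_j \D^C_j$ over $\overline{\0}_C$. For $j = 0, \ldots, 2r-2$, the divisor $\D^C_j$ is exceptional for the blowup of a smooth center of codimension $\binom{2r-j+1}{2}$, so its discrepancy is $\binom{2r-j+1}{2} - 1 = \frac{(2r-j)(2r+1-j)}{2} - 1$. The coefficient $2l$ of $\D^C_{2r-1}$ is obtained by first computing $K_{Z_C}$ via the vector bundle formula $K_{Z_C} = \pi^*(K_{G_C/P_C} \otimes \det \mathcal{N}^*)$ together with the character identifications $K_{G_C/P_C} \leftrightarrow 2\rho_u = (2l+2r+1)\chi$ and $\det \mathcal{N} \leftrightarrow (2r+1)\chi$, where $\chi$ generates the character lattice of $P_C$, and then tracking the pullback decomposition $\Phi^* D_{2r-1} = \D^C_{2r-1} + \sum_j (2r-j) \D^C_j$ built from the multiplicities $\mathrm{mult}_{\bar{O}^C_j}(\det) = 2r-j$ of the determinant along the rank strata. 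The most delicate step will be this last bookkeeping: reconciling the contribution from $K_{Z_C}$ with the pullback decomposition and the blowup discrepancies so as to recover precisely the coefficients in the statement.
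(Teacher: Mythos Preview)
Your plan mirrors the paper's proof almost step for step: cite the complete-quadrics construction for smoothness and simple normal crossings, read off $\binom{2r-j+1}{2}-1$ for the discrepancy of $\D^C_j$ ($j\le 2r-2$) as ``codimension minus one'', and extract the coefficient of $\D^C_{2r-1}$ from the index of $G_C/P_C$ together with $c_1(\mathfrak n_C)=-(2r+1)\lambda_{2r}$. You are in fact a bit more scrupulous than the paper, since you flag that one must also track the pullback decomposition $\phi^*D_{2r-1}=\D^C_{2r-1}+\sum_j(2r-j)\D^C_j$ and do some final ``bookkeeping''.

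That last point is exactly where the argument is delicate, and your proposal does not close it. The number $\binom{2r-j+1}{2}-1$ is the discrepancy of $\D^C_j$ \emph{over $Z_C$} (equivalently over the affine space $\mathfrak n_C$): in the complete-quadrics tower each successive center meets the previously created exceptional divisors transversally, so the iterated blowup discrepancies over $Z_C$ do not interact. But the proposition concerns $K_{\widehat Z_C}$ relative to $\overline{\0}^C_{r,l}$; to pass from one to the other you must add $\phi^*K_{Z_C/\overline{\0}^C_{r,l}}$. Since $K_{Z_C/\overline{\0}^C_{r,l}}$ is a positive multiple of $D_{2r-1}$, and $\phi^*D_{2r-1}$ carries a strictly positive coefficient $2r-j$ along every $\D^C_j$, this pullback alters the coefficients of all the $\D^C_j$, not just that of $\D^C_{2r-1}$. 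So you cannot simultaneously assert ``the discrepancy of $\D^C_j$ is $\binom{2r-j+1}{2}-1$'' and ``we still need to reconcile with the pullback of $K_{Z_C}$''; once you perform that reconciliation the first assertion has to be revisited. I would recommend actually carrying the computation through: determine $[D_{2r-1}]\in\mathrm{Pic}(Z_C)=\mathbb Z\cdot\pi^*H$ (the determinant of a symmetric form in $\mathrm{Sym}^2\mathcal U$ is a section of $\pi^*(\det\mathcal U)^{\otimes 2}$, so $[D_{2r-1}]=-2\pi^*H$), compare with $K_{Z_C}=-2l\,\pi^*H$, and then see precisely what coefficients the full formula
\[
K_{\widehat Z_C}\;=\;\sum_{j=0}^{2r-2}\Bigl(\tbinom{2r-j+1}{2}-1\Bigr)\D^C_j\;+\;\phi^*K_{Z_C/\overline{\0}^C_{r,l}}
\]
produces; do not assume in advance that the cross-terms vanish.
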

\begin{proof}
The first claim follows from classical constructions (\cite{Be97, Th99}). As each step is a blowup along a smooth center, the discrepancy of $\D_j^C$ is given by ${\rm codim}_{O_i}(\mathfrak{n}_C)-1$ for $i \leq 2r-2$, which is $\frac{(2r-j)(2r+1-j)}{2}-1$. For the discrepancy of $\D^C_{2r-1}$, we note that the isotropic Grassmannian $G_C/P_{C}$ has index $2r+2l+1$, while the first Chern class of $G_C \times^{P_{C}} \mathfrak{n}_C$ (viewed as a vector bundle over $G_C/P_{C}$) is given by the negative sum of roots defining $\mathfrak{n}_C$, which is $-(2r+1)(\varepsilon_1^C + \cdots + \varepsilon_{2r}^C) = -(2r+1) \lambda_{2r}^C$, where $\lambda_{2r}^C$ is the $2r$--th fundamental weight. By adjunction formula, this gives that $K_{Z_C} = 2l  D_{2r-1}$, hence the discrepancy of $\D^C_{2r-1}$ is $2l$.
\end{proof}

Now we turn to the case of $\0_D$. For $i=0, 1, \cdots, r$, let us denote by $O^D_{2i} \subset \wedge^2 V_D$ the set of elements of rank $2i$ (viewing $\wedge^2 V_D$ as skew-symmetric matrices of size $2r \times 2r$) ,  and denote by $\bar{O}^D_{2i}$ its closure. Note that both $O^D_{2i}$ and $\bar{O}^D_{2i}$ are $P_{D}$-invariant. Again the Jacobson-Morosov resolution induces
\begin{align*}
	G_D \times^{P_D} O_{2k}^D \longrightarrow \0_{2k}^D:=\cO_{[2^{2k}, 1^{4r+2l+2-4k}]} \subset \overline{\0}_D.
\end{align*}

Consider the following birational map
$$
\psi: \widehat{\mathfrak{n}}_D \to \mathfrak{n}_D = \wedge^2 V_D
$$
obtained by  successive blowups of $\mathfrak{n}_D =  \wedge^2 V_D$ along strict transforms of $\bar{O}^D_{2i}$ from smallest $O^D_0$ to the biggest $\bar{O}^D_{2r-4}$ (note that $\bar{O}^D_{2r-2}$ is the determinant hypersurface). This gives the following birational map.
$$
\Psi:  \widehat{Z}_D:=G_D \times^{P_D} \widehat{\mathfrak{n}}_D \to Z_D:=G_D \times^{P_{D}} \mathfrak{n}_D \to  \overline{\0}_{r,l}^D.
$$
Let us denote by $\D_{2i}^D$ the exceptional divisor over $\overline{\0}_{2i}^D$ for $i=0, \cdots, r-1$.  Note that $\D_{2r-2}^D \to  G_D \times^{P_D} \bar{O}_{2r-2}^D$ is birational.

The proof of the following result is similar to that of Proposition \ref{p.logresolutionC}. We just need to note that the orthogonal Grassmannian $G_D/P_{D}$ has the same index $2r+2l+1$ and  the negative sum of roots defining $\mathfrak{n}_D$ is $-(2r-1)(\varepsilon_1^D + \cdots + \varepsilon_{2r}^D) = -(2r-1) \lambda_{2r}^D$, where $\lambda_{2r}^D$ is the $2r$-th fundamental weight.

\begin{proposition}\label{p.logresolutionD}
The morphism $\Psi$ is a log resolution for $\overline{\0}_{r,l}^D$ and we have
$$
K_{\widehat{Z}_D} = (2l+2)  \D^D_{2r-2} + \sum_{j=0}^{r-2} ((r-j)(2r-2j-1)-1) \D^D_{2j}.
$$
\end{proposition}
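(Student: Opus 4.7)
The proof follows the same template as Proposition \ref{p.logresolutionC}, with the type $C$ determinantal hypersurface in $\mathrm{Sym}^2 V_C$ replaced by the Pfaffian hypersurface in $\wedge^2 V_D$. The plan has two steps: verify that $\Psi$ is a log resolution, and then compute the discrepancy of each exceptional divisor.

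For the first step, the construction $\psi: \widehat{\mathfrak{n}}_D \to \wedge^2 V_D$ is exactly (an affine version of) the variety of complete skew forms of Bertram \cite{Be97} and Thaddeus \cite{Th99}: iteratively blowing up the strict transforms of $\bar{O}^D_0 \subset \bar{O}^D_2 \subset \cdots \subset \bar{O}^D_{2r-4}$ produces a smooth model in which these strict transforms, together with the Pfaffian divisor $\bar{O}^D_{2r-2}$, have simple normal crossings. Since the whole construction is $P_D$--equivariant, it extends to $\widehat{Z}_D = G_D \times^{P_D} \widehat{\mathfrak{n}}_D$ and furnishes the log resolution $\Psi$. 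For the discrepancies of $\D^D_{2j}$ with $j \leq r-2$, each is the exceptional divisor of a blowup along a smooth center whose codimension equals $\mathrm{codim}_{\mathfrak{n}_D} \bar{O}^D_{2j}$; a direct count of skew-symmetric matrices of bounded rank gives this codimension as $\binom{2r-2j}{2} = (r-j)(2r-2j-1)$, so the discrepancy is $(r-j)(2r-2j-1)-1$ by the standard blowup formula.

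The main remaining step, and the one requiring the most care, is to pin down the discrepancy of $\D^D_{2r-2}$ (the strict transform of the Pfaffian divisor $D^D_{2r-2}$). Since $\overline{\cO}^D_{r,l}$ has trivial canonical class (being symplectic) and $D^D_{2r-2}$ is the only exceptional divisor of the birational map $Z_D \to \overline{\cO}^D_{r,l}$, the sought discrepancy is simply the coefficient of $D^D_{2r-2}$ in $K_{Z_D}$. I would compute $K_{Z_D}$ by adjunction on the vector bundle $p: Z_D \to G_D/P_D$: the orthogonal Grassmannian $\mathrm{OG}(2r, 4r+2l+2)$ has Fano index $2r+2l+1$, while summing the weights $\{\varepsilon_i + \varepsilon_j : 1 \leq i<j\leq 2r\}$ of the $P_D$--module $\mathfrak{n}_D$ gives $c_1(\mathfrak{n}_D) = -(2r-1)\lambda_{2r}^D$. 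Substituting yields $K_{Z_D} = -(2l+2)\,p^*\lambda_{2r}^D$. The delicate identification is $[D^D_{2r-2}] = -p^*\lambda_{2r}^D$, which follows from viewing the Pfaffian as a $P_D$--equivariant section of character $\det|_{GL_{2r}} = \lambda_{2r}^D$ whose zero locus is precisely $D^D_{2r-2}$, in the sign convention already used for Proposition \ref{p.logresolutionC}. Putting everything together gives $K_{Z_D} = (2l+2) D^D_{2r-2}$, hence $\D^D_{2r-2}$ has discrepancy $2l+2$, and the claimed formula for $K_{\widehat{Z}_D}$ follows.
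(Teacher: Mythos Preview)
Your proposal is correct and follows essentially the same route as the paper. The paper's own argument simply refers back to Proposition~\ref{p.logresolutionC} and records the two type~$D$ inputs---the Fano index $2r+2l+1$ of $\mathrm{OG}(2r,4r+2l+2)$ and the weight sum $-(2r-1)\lambda_{2r}^D$ for $c_1(\mathfrak{n}_D)$---leaving the adjunction step implicit; you reproduce exactly this computation and, in addition, make explicit the identification $[D^D_{2r-2}]=-p^*\lambda_{2r}^D$ via the $P_D$-character of the Pfaffian, which is the step the paper subsumes under ``by adjunction formula''.
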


\subsubsection{Asymmetry for spherical Richardson orbits}
We consider the case  $r=1$ in this subsection,  namely we have $\cO_C=\cO_{[2^2, 1^{2l}]} \subset \mathfrak{sp}_{2l+4}$ and $\cO_B = \cO_{[3, 1^{2l+2}]} \subset \mathfrak{so}_{2l+5}$.
Motivated by Question \ref{question}, we are going to compute the stringy E-functions of $\overline{\cO}_C$ and $\overline{\cO}_D$ ( which is the universal cover of $\overline{\cO}_B$).

\begin{proposition} \label{p.Omin}
Let $\0_{min} \subset \g$ be the minimal nilpotent orbit in a simple Lie algebra and $2d = \dim \0_{min}$.
Then
$$
{\rm E}_{\rm st}(\overline{\0}_{min}) = {\rm E} (\mathbb{P} \0_{min}) \times \cfrac{(q-1)q^d}{(q^d-1)}.
$$
\end{proposition}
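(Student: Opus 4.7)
The plan is to construct a log resolution of $\overline{\0}_{min}$ by blowing up its unique singular point (the vertex), compute the sole discrepancy by a weight argument, and then apply Batyrev's formula. Using the Killing form, $\overline{\0}_{min} \subset \g$ is the affine cone over the smooth projective adjoint variety $\mathbb{P}\0_{min} \subset \mathbb{P}\g$, with unique singularity at $0$. Blowing up at $0$ yields $\pi: \widetilde X \to \overline{\0}_{min}$ with $\widetilde X$ the total space of the tautological bundle $\cO_{\mathbb{P}\0_{min}}(-1)$ inherited from the embedding $\mathbb{P}\0_{min} \subset \mathbb{P}\g$, and exceptional divisor $E$ equal to the zero section $\mathbb{P}\0_{min}$. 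Since $\widetilde X$ is smooth and $E$ is smooth and irreducible, $\pi$ is a log resolution, and $\widetilde X \setminus E \cong \0_{min}$. Moreover, the projection $\0_{min} \to \mathbb{P}\0_{min}$ is a Zariski locally trivial $\mathbb{C}^\ast$-bundle (the frame bundle of $\cO(-1)$ with the zero section removed), so $\mathrm{E}(\0_{min}) = (q-1)\,\mathrm{E}(\mathbb{P}\0_{min})$.

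The key step is to show that the discrepancy of $E$ is $a = d - 1$. Since $\overline{\0}_{min}$ is a symplectic variety, $K_{\overline{\0}_{min}}$ is trivial on the smooth locus, generated by the Liouville form $\Omega = \omega^d/d!$, where $\omega$ is the Kostant--Kirillov form. From the explicit formula $\omega_x(X^\sharp, Y^\sharp) = \langle x, [X,Y] \rangle$ the form $\omega$ is linear in $x$, hence homogeneous of weight $1$ under the scaling $\mathbb{C}^\ast$-action $x \mapsto t x$ on $\g$; in particular $\Omega$ has weight $d$. Lifting the scaling to $\widetilde X$ by $t \cdot (u, z) = (tu, z)$ in local coordinates where $u$ is the fibre coordinate on $\cO(-1)$ and $z_1, \dots, z_{2d-1}$ come from $\mathbb{P}\0_{min}$, the local top form $du \wedge dz_1 \wedge \cdots \wedge dz_{2d-1}$ has weight $1$. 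Matching weights forces
\[
\pi^\ast \Omega \;=\; u^{d-1}\, f(u, z)\, du \wedge dz_1 \wedge \cdots \wedge dz_{2d-1},
\]
with $f$ regular and nowhere vanishing on $E$; hence $\mathrm{ord}_E(\pi^\ast \Omega) = d - 1$.

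Finally, Batyrev's formula (Proposition \ref{p.Batyrev}) with a single exceptional divisor of discrepancy $d-1$ gives
\begin{align*}
\mathrm{E}_{\mathrm{st}}(\overline{\0}_{min})
&= \mathrm{E}(\0_{min}) + \mathrm{E}(\mathbb{P}\0_{min}) \cdot \frac{q-1}{q^d - 1}
 = \mathrm{E}(\mathbb{P}\0_{min})(q-1)\left(1 + \frac{1}{q^d - 1}\right) \\
&= \mathrm{E}(\mathbb{P}\0_{min}) \cdot \frac{(q-1)\,q^d}{q^d - 1}.
\end{align*}
The main obstacle is rigorous justification of $a = d-1$ in Step 2. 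The weight argument is conceptual and uniform, but should be cross-checked by adjunction: using $N_{E/\widetilde X} = \cO_E(-1)$ and the fact that $K_{\mathbb{P}\0_{min}} = \cO(-d)$ with respect to the embedding bundle inherited from $\mathbb{P}\g$ (for example, for $\g = \mathfrak{sp}_{2n}$ the adjoint variety $\mathbb{P}\0_{min} = \mathbb{P}^{2n-1}$ sits in $\mathbb{P}\g$ by the Veronese embedding, since the adjoint representation is $\mathrm{Sym}^2\,\mathbb{C}^{2n}$, so $\cO(1) = \cO_{\mathbb{P}^{2n-1}}(2)$ and $K = \cO_{\mathbb{P}^{2n-1}}(-2n) = \cO(-n) = \cO(-d)$), one recovers $a = d-1$ independently.
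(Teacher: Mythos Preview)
Your proof is correct and follows essentially the same route as the paper: blow up the vertex to obtain the total space of $\cO_{\mathbb{P}\0_{min}}(-1)$ as a log resolution with a single smooth exceptional divisor $E \cong \mathbb{P}\0_{min}$, compute the discrepancy $a = d-1$, and plug into Batyrev's formula. The only difference is in justifying $a = d-1$: the paper simply cites Beauville \cite[(3.5)]{Be00}, whereas you reprove it via the $\mathbb{C}^*$--weight argument on the Kostant--Kirillov form (which is precisely Beauville's argument) and add an independent adjunction check using $K_{\mathbb{P}\0_{min}} = \cO(-d)$.
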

\begin{proof}
Let $Z \to \overline{\0}_{min}$ be the blowup of the origin, which is a log resolution (cf. \cite[(2.6)]{Be00}).
This is, in fact, the Jacobson-Morosov resolution.
  The exceptional divisor $\D$
is $\mathbb{P} \0_{min}$ and $Z$ is the total space of the line bundle $\0_{\mathbb{P} \g}(-1)|_{\mathbb{P} \0_{min}}$.
By  \cite[(3.5)]{Be00}, we have $K_Z = (d-1) \D$. We can now compute
$$
{\rm E}_{\rm st} (\overline{\0}_{min}) = {\rm E}(\0_{min}) + {\rm E}(\D) \cfrac{q-1}{q^{d}-1} ={\rm E} (\mathbb{P} \0_{min}) \times \cfrac{(q-1)q^d}{(q^d-1)}.
$$
\end{proof}

By a direct computation using Propositions \ref{p.EPolyGr} and \ref{p.Omin}, we have

\begin{proposition} \label{p.r=1D}
Let $\overline{\0}_D = \overline{\0}_{[2^2,1^{2l+2}]} \subset \mathfrak{so}_{2l+6}$ be the minimal nilpotent orbit closure (of dimension $4l+6$). Then its stringy E-function is given by
$$
{\rm E}_{\rm st} (\overline{\0}_D) = \cfrac{(q^{l+1}+1)(q^{l+3}-1)(q^{2l+4}-1)q^{2l+3}}{(q^2-1)(q^{2l+3}-1)}.
$$
\end{proposition}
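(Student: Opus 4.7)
The plan is to identify $\overline{\0}_D$ as the closure of the minimal nilpotent orbit in the simple Lie algebra $\mathfrak{so}_{2l+6}$ of type $D_{l+3}$, and then apply Proposition \ref{p.Omin} once I know the Hodge-Deligne polynomial of the projectivized minimal orbit $\mathbb{P}\0_{min}$. Since the partition $[2^2, 1^{2l+2}]$ corresponds to the highest root orbit in $\mathfrak{so}_{2l+6}$, the projectivization $\mathbb{P}\0_{min}$ is the adjoint variety of type $D_{l+3}$, which is the orthogonal isotropic Grassmannian $\mathrm{OG}(2, 2l+6)$. I will verify the dimension match: $\dim \0_{min} = 4l+6$, so $d = 2l+3$, while $\dim \mathrm{OG}(2, 2l+6) = 4(l+3)-7 = 4l+5 = 2d-1$, as expected.

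Next, I would plug into the formula of Proposition \ref{p.EPolyGr} with $k=2$ and $n=l+3$, obtaining
\begin{align*}
{\rm E}(\mathrm{OG}(2, 2l+6)) = \frac{(q^{l+1}+1)(q^{l+2}-1)}{q-1} \cdot \frac{(q^{l+2}+1)(q^{l+3}-1)}{q^2-1}.
\end{align*}
Using the identity $(q^{l+2}-1)(q^{l+2}+1) = q^{2l+4}-1$ this simplifies to
\begin{align*}
{\rm E}(\mathrm{OG}(2, 2l+6)) = \frac{(q^{l+1}+1)(q^{2l+4}-1)(q^{l+3}-1)}{(q-1)(q^2-1)}.
\end{align*}

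Finally, I would apply Proposition \ref{p.Omin} with $d = 2l+3$, which gives
\begin{align*}
{\rm E}_{\rm st}(\overline{\0}_D) = {\rm E}(\mathrm{OG}(2, 2l+6)) \cdot \frac{(q-1)q^{2l+3}}{q^{2l+3}-1},
\end{align*}
and the $(q-1)$ factors cancel, producing exactly the claimed expression. There is no real obstacle here beyond correctly identifying the adjoint variety in type $D$ and matching the indexing conventions of Proposition \ref{p.EPolyGr}; the remainder is a short algebraic simplification. As a sanity check, one could also verify this via the log resolution constructed in Proposition \ref{p.logresolutionD} specialized to $r=1$ (where the blowup of the origin in $\wedge^2 V_D = \wedge^2 \mathbb{C}^2$ is trivial since this is just $\mathbb{C}$), which recovers the same answer and reconciles with the general computation of stringy E-functions via the Jacobson-Morosov resolution.
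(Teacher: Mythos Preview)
Your proposal is correct and is exactly the approach the paper uses: the paper states the proposition follows ``by a direct computation using Propositions \ref{p.EPolyGr} and \ref{p.Omin}'', and you have simply supplied the details of that computation, correctly identifying $\mathbb{P}\0_{min}$ with $\mathrm{OG}(2,2l+6)$ and carrying out the algebra.
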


\begin{remark}
In a similar way, we can compute ${\rm E}_{\rm st}(\overline{\0}_{min})$ for all other classical types, which give the following (note that all are polynomials):
\begin{align*}
 A_n: & \   {\rm E}_{\rm st}(\overline{\0}_{min}) ={\rm E}(T^* \mathbb{P}^n) = \cfrac{(q^{n+1}-1)q^n}{q-1}. \\
 B_n: &  \ {\rm E}_{\rm st}(\overline{\0}_{min}) =\cfrac{(q^{2n}-1)q^{2n-2}}{q^2-1}.\\
 C_n: & \  {\rm E}_{\rm st}(\overline{\0}_{min}) ={\rm E}_{\rm st} (\mathbb{C}^{2n}/\pm 1) = q^{2n}+q^n.
\end{align*}

\end{remark}

\begin{proposition}\label{p.r=1C}
Let $\overline{\0}_C = \overline{\0}_{[2^2,1^{2l}]} \subset \mathfrak{sp}_{2l+4}$ be the minimal special nilpotent orbit closure  (of dimension $4l+6$). Then its stringy E-function is given by
$$
{\rm E}_{\rm st} (\overline{\0}_C) = \frac{(q^{2l+2}-1)(q^{2l+3}-1)(q^{2l+4}-1)q^3}{(q^2-1)(q^3-1)(q^{2l+1}-1)}.
$$
\end{proposition}
\begin{proof}
In this case, $\mathfrak{n}_C \simeq {\rm Sym}^2(\mathbb{C}^2)$ is the set of symmetric matrices of size $2 \times 2$ and $\mathfrak{n}_C = O_2 \bigsqcup O_1 \bigsqcup O_0$, where $O_0$ is the origin and $O_1$ is the affine quadric surface $\hat{\mathbb{Q}}^2$ minus the origin. 
It follows that ${\rm E}(O_1)={\rm E}(\mathbb{P}^1) \times {\rm E}(\mathbb{C}^*) = q^2-1$ and ${\rm E}(O_2) = q^2(q-1)$.
By Proposition \ref{p.logresolutionC}, the log resolution $\Phi: \widehat{Z} \to \overline{\0}^C_{1,l}$ is obtained by blowing up the zero section in the Jacobson-Morosov resolution and it  satisfies
$$
K_{\widehat{Z}} = 2l \D_1 + 2 \D_0.
$$

Note that $\D_1 = G_C \times^{P_C} \tilde{\mathbb{Q}}^2 $ and $\D_0 = G_C \times^{P_C}  \mathbb{P}^2$, where $\tilde{\mathbb{Q}}^2 \to \hat{\mathbb{Q}}^2$ is the blowup at the origin.  It follows that $\D_1 \cap \D_0 \simeq G_C \times^{P_C} \mathbb{P}^1$.  Now we can compute the stringy E-function by:
\begin{align*}
		{\rm E}_{\rm st} (\overline{\cO}_C) = {\rm E}(G_C/P_C) \times \mathbb{E} = {\rm E}({\rm IG}(2, 2l+4)) \times \mathbb{E},
	\end{align*}
	where
	\begin{align*}
		\mathbb{E} &= {\rm E}(O_2) + {\rm E}(O_1) \cfrac{q-1}{q^{2l+1}-1} + {\rm E}(\mathbb{P}^2 \setminus \mathbb{P}^1) \cfrac{q-1}{q^{3}-1} + {\rm E}(\mathbb{P}^1) \cfrac{(q-1)^2}{(q^3-1)(q^{2l+1}-1)} \\
		           &=q^2(q-1)+ \frac{(q^2-1)(q-1)}{q^{2l+1}-1}+\frac{q^2(q-1)}{q^{3}-1}+\frac{(q+1)(q-1)^2}{(q^3-1)(q^{2l+1}-1)} \\
		           &=\frac{(q-1)(q^{2l+3}-1)q^3}{(q^3-1)(q^{2l+1}-1)}.
	\end{align*}
	Thus
	\begin{align*}
		{\rm E}_{\rm st}(\overline{\cO}_C) &= \frac{(q^{2l+4}-1)(q^{2l+2}-1)}{(q-1)(q^2-1)} \times \frac{(q-1)(q^{2l+3}-1)q^3}{(q^3-1)(q^{2l+1}-1)} \\
		                         &=\frac{(q^{2l+2}-1)(q^{2l+3}-1)(q^{2l+4}-1)q^3}{(q^2-1)(q^3-1)(q^{2l+1}-1)}.
	\end{align*}
\end{proof}

As an immediate corollary, we give a negative answer to Question \ref{question}.

\begin{corollary}
 The double cover of $\overline{\0}_C$ is a mirror of $\overline{\0}_B$, but $\overline{\0}_C$ is not a mirror of the double cover of $\overline{\0}_B$.
\end{corollary}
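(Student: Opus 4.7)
\emph{Proof plan.} The first half is essentially a restatement of Example~\ref{e.r=1} and follows immediately from Theorem~\ref{t.Richardson}. Both $\0_C$ and $\0_B$ are Richardson: take $P\subset\mathrm{Sp}_{2l+4}$ with Levi $\mathfrak{gl}_1\oplus\mathfrak{sp}_{2l+2}$; the induction formula of Section~2.3 produces the partition $[3,1^{2l+2}]$, whose $C$-collapse is $[2^2,1^{2l}]=\mathbf{d}_C$, and Lemma~\ref{degree-Springer-map} gives $\deg\nu_P=2$, so $X_P\to\overline{\0}_C$ is the double cover. Dually, $^LP\subset\mathrm{SO}_{2l+5}$ with Levi $\mathfrak{gl}_1\oplus\mathfrak{so}_{2l+3}$ induces $[3,1^{2l+2}]$ directly without collapse, so $\nu_{{}^LP}$ is birational and $X_{{}^LP}=\overline{\0}_B$ (which is normal by \cite{KP82}). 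Theorem~\ref{t.Richardson}(3) then yields the mirror pair. Equivalently, one verifies directly from the crepant resolutions $T^*\mathbb{P}^{2l+3}\to X_P$ and $T^*\mathbb{Q}^{2l+3}\to\overline{\0}_B$ (using Proposition~\ref{p.EPolyGr}) that both stringy E-functions equal $\mathrm{E}(\mathbb{P}^{2l+3})\,q^{2l+3}=\mathrm{E}(\mathbb{Q}^{2l+3})\,q^{2l+3}=\tfrac{(q^{2l+4}-1)\,q^{2l+3}}{q-1}$.

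For the negative assertion, Proposition~\ref{p.levy} identifies the double cover of $\overline{\0}_B$ with $\overline{\0}_D$, so the task reduces to showing $\mathrm{E}_{\rm st}(\overline{\0}_C)\neq \mathrm{E}_{\rm st}(\overline{\0}_D)$. The plan is to use the explicit formulas of Propositions~\ref{p.r=1C} and~\ref{p.r=1D}. Clearing denominators and cancelling the common factor $(q^2-1)(q^{2l+4}-1)$ from both sides, the purported equality becomes the polynomial identity
\[
(q^{5}-1)(q^{2l+2}-1)(q^{2l+3}-1)\,q^{2l+1} \;=\; (q^{l+1}+1)(q^{l+3}-1)(q^{3}-1)(q^{2l+1}-1)\,q^{2l+3}.
\]
To refute it, specialize $q$ to a primitive $(2l+3)$-th root of unity $\omega$. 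The left-hand side vanishes because of the factor $(\omega^{2l+3}-1)=0$. For the right-hand side, for every $l\geq 1$ none of $\omega^{l+1}+1$, $\omega^{l+3}-1$, $\omega^{3}-1$, $\omega^{2l+1}-1$, or $\omega^{2l+3}$ can vanish: these would require $2l+3$ to divide, respectively, $2(l+1)$, $l+3$, $3$, or $2l+1$, each of which is either strictly smaller than $2l+3$ and nonzero, or has the wrong parity. Hence the right-hand side is nonzero at $\omega$, contradicting the assumed identity.

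The only non-routine step is spotting the right specialization. Once one observes that the two denominators contribute the coprime factors $(q^{2l+1}-1)$ and $(q^{2l+3}-1)$, the cyclotomic polynomial $\Phi_{2l+3}$ appears cleanly on only one side, and a primitive $(2l+3)$-th root of unity is the natural witness separating the poles. The remainder is routine cyclotomic bookkeeping, and the argument is uniform in $l\geq 1$.
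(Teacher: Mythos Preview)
Your argument is correct and follows the paper's line: the positive half is exactly Theorem~\ref{t.Richardson}(3) applied to the polarization with Levi $\mathfrak{gl}_1\oplus\mathfrak{sp}_{2l+2}$, and the negative half compares the explicit formulas of Propositions~\ref{p.r=1D} and~\ref{p.r=1C}, which is all the paper says (``an immediate corollary''). Your cyclotomic specialization at a primitive $(2l+3)$-th root of unity is a clean uniform witness that the paper leaves implicit; note one harmless slip: the induced partition on the $C$ side is $[3,1^{2l+1}]$ (a partition of $2l+4$), not $[3,1^{2l+2}]$, though your stated $C$-collapse $[2^2,1^{2l}]$ is correct.
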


\begin{remark}
Note that ${\rm E}_{\rm st}(\overline{\cO}_D)$ and ${\rm E}_{\rm st}(\overline{\cO}_C)$ are in general not polynomials, which gives another way to show that $\overline{\cO}_D$ and $\overline{\cO}_C$ have no crepant resolutions.
\end{remark}

\subsubsection{Asymmetry for rigid special orbits}

Let us consider the case $r=2$, namely $\overline{\0}_{2,l}^C= \cO_{[2^4, 1^{2l}]} \subset \mathfrak{sp}_{2l+8}$ and $\overline{\0}_{2,l}^B = \cO_{[3, 2^2, 1^{2l+2}]} \subset \mathfrak{so}_{2l+9}$, which are rigid special and spherical.  The double cover of $\overline{\0}_{2,l}^B$ is the orbit closure $\overline{\0}_{2,l}^D = \overline{\0}_{[2^4,1^{2l+2}]} \subset \mathfrak{so}_{2l+10}$. In the following, we will compare their stringy E-functions.

\begin{proposition}
The stringy E-function of $\overline{\0}_{2,l}^D$ is given by
$$
{\rm E}_{\rm st} (\overline{\0}^D_{2,l}) = \cfrac{(q^{l+1}+1)(q^{2l+5}+1)(q^{l+5}-1)(q^{2l+4}-1)(q^{2l+6}-1)(q^{2l+8}-1)q^{6}}{(q^2-1)(q^4-1)(q^6-1)(q^{2l+3}-1)}.
$$
\end{proposition}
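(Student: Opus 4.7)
\emph{Proof proposal.} I will specialize the log resolution of Proposition \ref{p.logresolutionD} to $r=2$. Here $\mathfrak{n}_D = \wedge^2 V_D$ with $V_D \cong \mathbb{C}^4$ is six-dimensional, and its rank stratification has only three pieces $O_0 \subset \bar{O}_2 \subset \mathfrak{n}_D$, where $\bar{O}_2$ is the Pfaffian hypersurface (the affine cone over the Klein quadric $\mathbb{G}(2,4) \cong \mathbb{Q}^4 \subset \mathbb{P}^5$) and $O_4$ is the open complement. The range $0 \leq j \leq 2r-4 = 0$ in Proposition \ref{p.logresolutionD} therefore reduces to a \emph{single} blowup at the origin: $\widehat{\mathfrak{n}}_D \to \mathfrak{n}_D$. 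The exceptional divisor $\D_0$ is a $\mathbb{P}^5$, the strict transform $\D_2$ of $\bar{O}_2$ is smooth (blowing up the vertex of a quadric cone resolves it, giving the total space of $\mathcal{O}(-1)|_{\mathbb{Q}^4}$), and $\D_0 \cap \D_2$ is the Klein quadric $\mathbb{Q}^4 \subset \mathbb{P}^5$. Proposition \ref{p.logresolutionD} gives discrepancies $a_0 = r(2r-1)-1 = 5$ and $a_2 = 2l+2$.

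Since the Jacobson--Morosov resolution $\widehat{Z}_D = G_D \times^{P_D} \widehat{\mathfrak{n}}_D \to G_D/P_D = {\rm OG}(4, 2l+10)$ is a Zariski locally trivial fibration over a smooth base, property (2) of Proposition \ref{p.Batyrev} lets me factor
\begin{align*}
{\rm E}_{\rm st}(\overline{\0}^D_{2,l}) = {\rm E}({\rm OG}(4, 2l+10)) \cdot \mathbb{E},
\end{align*}
where $\mathbb{E}$ is the local contribution from the fiber $\widehat{\mathfrak{n}}_D$. With the four strata $O_4$, $\D_2 \setminus \D_0$, $\D_0 \setminus \D_2$, $\D_0 \cap \D_2$ contributing according to Batyrev's formula with the above discrepancies, I get
\begin{align*}
\mathbb{E} = {\rm E}(O_4) + {\rm E}(O_2) \cdot \tfrac{q-1}{q^{2l+3}-1} + {\rm E}(\mathbb{P}^5 \setminus \mathbb{Q}^4) \cdot \tfrac{q-1}{q^6-1} + {\rm E}(\mathbb{Q}^4) \cdot \tfrac{(q-1)^2}{(q^6-1)(q^{2l+3}-1)}.
\end{align*}
Using ${\rm E}(\mathbb{Q}^4) = (1+q^2)(1+q+q^2)$, ${\rm E}(\bar{O}_2) = 1 + (q-1){\rm E}(\mathbb{Q}^4)$ (via the $\mathbb{C}^*$-bundle $\bar{O}_2 \setminus \{0\} \to \mathbb{Q}^4$), and ${\rm E}(O_4) = q^6 - {\rm E}(\bar{O}_2)$, each piece factors nicely through $(q^3 \pm 1)$.

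The remainder is algebraic simplification, which I expect to be the only real obstacle but which is tractable: the first two terms combine (via a common factor $(q-1)(q^3-1)$) into $\tfrac{(q-1)(q^3-1)(q^{2l+5}+1)}{q^{2l+3}-1}$, and adding in the third and fourth terms (whose numerators telescope through $q^{2l+5}+1$ as well) collapses $\mathbb{E}$ to $\tfrac{(q-1)\,q^6\,(q^{2l+5}+1)}{(q^3+1)(q^{2l+3}-1)}$. Multiplying by the product formula in Proposition \ref{p.EPolyGr} for ${\rm E}({\rm OG}(4, 2l+10)) = \prod_{j=1}^{4}\tfrac{(q^{l+j}+1)(q^{l+1+j}-1)}{q^j-1}$ and pairing the middle factors via $(q^{l+i}+1)(q^{l+i}-1) = q^{2l+2i}-1$ together with $(q^3-1)(q^3+1) = q^6-1$ yields exactly the claimed formula. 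The geometric input—identifying $\bar{O}_2$ as the cone over the Klein quadric and recognizing the blowup at the vertex as a log resolution of that local model—is the essential structural observation; the rest is a routine but careful factorization.
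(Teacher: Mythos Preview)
Your proof is correct and follows essentially the same route as the paper's own argument: both specialize Proposition \ref{p.logresolutionD} to $r=2$, identify $\bar{O}_2$ with the affine cone over ${\rm Gr}(2,4)\cong\mathbb{Q}^4$, obtain the identical four-term Batyrev expression, and factor out ${\rm E}({\rm OG}(4,2l+10))$. You supply the intermediate simplification $\mathbb{E} = \tfrac{(q-1)\,q^6\,(q^{2l+5}+1)}{(q^3+1)(q^{2l+3}-1)}$ that the paper leaves as ``a direct computation'', but otherwise the arguments coincide.
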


\begin{proof}
In this case, $\mathfrak{n}_D \simeq \wedge^2 \mathbb{C}^4 = O_4 \bigsqcup O_2 \bigsqcup O_0$. Note that $\bar{O}_2 = O_2 \bigsqcup \{0\}$ is the affine cone of the Pl\"ucker embedding ${\rm Gr}(2,4) \subset \mathbb{P}(\wedge^2 \mathbb{C}^4)$. It follows that
$$
{\rm E}(O_2) = {\rm E}(\mathbb{C}^*) \times {\rm E}({\rm Gr}(2,4)) = (q^2+1)(q^3-1), \quad
{\rm E}(O_4) = q^6 - {\rm E}(O_2) - 1 = q^2(q-1)(q^3-1).
$$

By Proposition \ref{p.logresolutionD}, a log resolution  $\Psi: \widehat{Z} \to \overline{\0}_{2,l}^D$  is obtained by blowing up the zero section of $G_D \times^{P_D} \mathfrak{n}_D$ and we have
$$
K_{\widehat{Z}} = (2l+2) \D_2^D + 5 \D_0^D,
$$
where $\D_0^D = G_D\times^{P_D} \mathbb{P}^5$ and $\D_2^D = G_D\times^{P_D} (\0_{{\rm Gr}(2,4)}(-1))$. It follows that $\D_0^D \cap \D_2^D \simeq G_D\times^{P_D} {\rm Gr}(2,4)$. Now we can compute the stringy E-function by
$$
{\rm E}_{\rm st} (\overline{\0}^D_{2,l}) ={\rm E}({\rm OG}(4, 2l+10)) {\big (}{\rm E}(O_4) + {\rm E}(O_2) \cfrac{q-1}{q^{2l+3}-1} + {\rm E}(\mathbb{P}^5 \setminus \mathbb{Q}^4)  \cfrac{q-1}{q^{6}-1} + {\rm E}(\mathbb{Q}^4) \cfrac{(q-1)^2}{(q^6-1)(q^{2l+3}-1)}{\big )}.
$$
Now a direct computation concludes the proof by using Proposition \ref{p.EPolyGr}.
\end{proof}

\begin{remark} \label{r.leadingterm}
It's interesting to note that for $l=1, 2$,  ${\rm E}_{\rm st} (\overline{\0}^D_{2,l})$ is a polynomial, which is not the case for $l \geq 3$. When $l=1$, we have
$$
{\rm E}_{\rm st} (\overline{\0}^D_{2,1}) = \cfrac{(q^5+1)(q^6-1)(q^7+1)(q^8-1)q^6}{(q^2-1)^2},
$$
which has the symmetric expansion
$$
q^{28}+2q^{26}+3q^{24}+q^{23}+3q^{22}+3q^{21}+2q^{20}+5q^{19}+q^{18}+6q^{17}+q^{16}+5q^{15}+2q^{14}+3q^{13}+3q^{12}+q^{11}+3q^{10}+ 2q^8+q^6.
$$
\end{remark}

We now turn to the case $\overline{\0}_{2, l}^C$ and start with some preliminary results.

\begin{proposition} \label{p.EPolyOrbits}
For $0 \leq i \leq m$, let $O_i(\C^m)\subset {\rm Sym}^2 (\mathbb{C}^m)$ be the set of symmetric matrices of rank $i$. Then
 $${\rm E}(O_k(\C^m)) = {\rm E}({\rm Gr}(k, m)) \times {\rm E}(O_k(\C^k)), \text{and} \ q^{\frac{m(m+1)}{2}} = \sum_{k=0}^m {\rm E}(O_k(\C^m)).$$
\end{proposition}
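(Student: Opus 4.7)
The plan is to establish the product formula by exhibiting a Zariski locally trivial fibration and then to deduce the summation identity by additivity of E-polynomials over a stratification.

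First I would define the map
$$\pi_k: O_k(\C^m) \longrightarrow {\rm Gr}(m-k,\, m), \qquad A \longmapsto \ker A,$$
viewing a symmetric matrix $A$ as a bilinear form $\C^m \times \C^m \to \C$ with radical $\ker A$. Over a point $K \in {\rm Gr}(m-k, m)$, such a form descends to a non-degenerate symmetric form on the $k$-dimensional quotient $\C^m/K$, identifying the fiber $\pi_k^{-1}(K)$ with $O_k(\C^m/K) \cong O_k(\C^k)$. This map is $GL_m$-equivariant with respect to the action $A \mapsto gAg^T$ on symmetric forms and the natural action on ${\rm Gr}(m-k,m) = GL_m/P$ for the parabolic stabilizer $P$ of a chosen $K_0$.

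Next I would argue Zariski local triviality. Since $GL_m \to GL_m/P$ admits Zariski-local sections over the standard affine charts of the Grassmannian (e.g., the open cells trivializing the tautological bundle), the $GL_m$-equivariant map $\pi_k = GL_m \times^P O_k(\C^k) \to GL_m/P$ is Zariski locally trivial with fiber $O_k(\C^k)$. Multiplicativity of E-polynomials on Zariski locally trivial fibrations then yields
$${\rm E}(O_k(\C^m)) = {\rm E}({\rm Gr}(m-k, m)) \cdot {\rm E}(O_k(\C^k)) = {\rm E}({\rm Gr}(k, m)) \cdot {\rm E}(O_k(\C^k)),$$
using the classical isomorphism ${\rm Gr}(m-k,m) \cong {\rm Gr}(k,m)$.

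For the summation identity, I would use the stratification
$${\rm Sym}^2(\C^m) = \bigsqcup_{k=0}^{m} O_k(\C^m)$$
by rank. Each $O_k(\C^m)$ is locally closed in ${\rm Sym}^2(\C^m)$ (cut out by vanishing of $(k+1) \times (k+1)$ minors and non-vanishing of some $k \times k$ minor), so additivity of compactly supported E-polynomials over locally closed stratifications gives
$$\sum_{k=0}^{m} {\rm E}(O_k(\C^m)) = {\rm E}({\rm Sym}^2 \C^m) = q^{\frac{m(m+1)}{2}},$$
the last equality because ${\rm Sym}^2 \C^m \cong \mathbb{A}^{m(m+1)/2}$. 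The only mildly subtle point is the Zariski local triviality of $\pi_k$, but this is immediate from the homogeneous bundle description; no genuine obstacle.
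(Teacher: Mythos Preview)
Your proof is correct and follows essentially the same approach as the paper: the paper also uses the kernel map $O_k(\C^m)\to {\rm Gr}(m-k,m)$ with fiber $O_k(\C^k)$, invokes Zariski local triviality (citing \cite{BL19} rather than spelling out the homogeneous bundle argument you give), and then obtains the summation identity from the rank stratification of ${\rm Sym}^2\C^m$. Your version is slightly more explicit about why the fibration is Zariski locally trivial, but the argument is otherwise identical.
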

\begin{proof}
We consider the fibration $O_k(\C^m) \to {\rm Gr}(m-k, m)$ by sending $A \in O_k(\C^m)$ to ${\rm Ker}(A) \in {\rm Gr}(m-k, m)$, which is Zaisiki locally trivial (see for example the proof of \cite[Proposition 4.4]{BL19}). The fiber of this fibration is $O_k(\C^k)$, which shows that ${\rm E}(O_k(\C^m)) = {\rm E}({\rm Gr}(k, m)) \times {\rm E}(O_k(\C^k))$.

On the other hand, we have
$$
q^{\frac{m(m+1)}{2}} = {\rm E}({\rm Sym}^2\mathbb{C}^m) = \sum_{k=0}^m {\rm E}(O_k(\C^m)).
$$
\end{proof}

By a simple computation, we have the following explicit formulae:
\begin{corollary}\label{c.EPolyO}
Let $O_i\subset {\rm Sym}^2 \mathbb{C}^4$ be the set of symmetric matrices of rank $i$, $i=1,2,3,4$.
Then their E-polynomials are given by
\begin{align*}
{\rm E}(O_1) &= q^4-1,  \quad \quad   \quad \quad  \quad \quad  {\rm E}(O_2) = q^2(q^2+1)(q^3-1), \\  {\rm E}(O_3) &= q^2(q^3-1)(q^4-1),  \quad \quad  {\rm E}(O_4) =q^6(q-1)(q^3-1).
\end{align*}
\end{corollary}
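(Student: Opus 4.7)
The plan is to apply Proposition \ref{p.EPolyOrbits} in two stages: first use the product formula ${\rm E}(O_k(\mathbb{C}^m))={\rm E}({\rm Gr}(k,m))\times {\rm E}(O_k(\mathbb{C}^k))$ to reduce to the full-rank case, and then determine the full-rank pieces ${\rm E}(O_k(\mathbb{C}^k))$ inductively from the total-mass identity $q^{k(k+1)/2}=\sum_{j=0}^{k}{\rm E}(O_j(\mathbb{C}^k))$. This is essentially a bookkeeping exercise, so the key is to organize the computation so that each $O_k(\mathbb{C}^4)$ falls out with minimal labor.

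First I would compute the Grassmannian factors from Proposition \ref{p.EPolyGr}: ${\rm E}({\rm Gr}(1,4))={\rm E}({\rm Gr}(3,4))=q^3+q^2+q+1$, ${\rm E}({\rm Gr}(2,4))=(q^2+1)(q^2+q+1)$, and ${\rm E}({\rm Gr}(4,4))=1$. Next I would pin down the full-rank contributions recursively. The base case is ${\rm E}(O_1(\mathbb{C}^1))={\rm E}(\mathbb{C}^*)=q-1$. Applying the sum formula for $m=2$ and using ${\rm E}(O_1(\mathbb{C}^2))={\rm E}({\rm Gr}(1,2))(q-1)=q^2-1$ yields ${\rm E}(O_2(\mathbb{C}^2))=q^2(q-1)$. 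Repeating for $m=3$ with ${\rm E}(O_1(\mathbb{C}^3))=q^3-1$ and ${\rm E}(O_2(\mathbb{C}^3))={\rm E}({\rm Gr}(2,3))\cdot q^2(q-1)=q^2(q^3-1)$ gives ${\rm E}(O_3(\mathbb{C}^3))=q^2(q-1)(q^3-1)$.

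With these in hand, the stated formulae for $k=1,2,3$ follow immediately from the product formula with $m=4$. For the top stratum $O_4$ I would use the sum identity $q^{10}=1+{\rm E}(O_1)+{\rm E}(O_2)+{\rm E}(O_3)+{\rm E}(O_4)$ and subtract, which yields ${\rm E}(O_4)=q^{10}-q^9-q^7+q^6=q^6(q-1)(q^3-1)$. Alternatively, since $O_4(\mathbb{C}^4)$ is the complement of the determinant hypersurface in ${\rm Sym}^2\mathbb{C}^4$, one could apply the recursion one more step to get the same answer, which serves as a consistency check.

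There is no real obstacle here; the only thing to watch is that the Zariski local triviality of $O_k(\mathbb{C}^m)\to{\rm Gr}(m-k,m)$ (invoked in Proposition \ref{p.EPolyOrbits}) is what legitimizes the multiplicativity of E-polynomials, and that the sum formula is to be read in the Grothendieck-ring sense so that the stratification ${\rm Sym}^2\mathbb{C}^m=\bigsqcup_k O_k(\mathbb{C}^m)$ gives an additive decomposition. Once both are granted, the corollary is a short arithmetic verification.
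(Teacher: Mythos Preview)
Your proposal is correct and follows exactly the approach the paper intends: the paper states that the corollary follows ``by a simple computation'' from Proposition~\ref{p.EPolyOrbits}, and your write-up is precisely that computation, carried out via the product formula ${\rm E}(O_k(\mathbb{C}^m))={\rm E}({\rm Gr}(k,m))\cdot{\rm E}(O_k(\mathbb{C}^k))$ together with the inductive use of the total-mass identity. All numerical values check out.
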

\begin{remark}
It seems that the general formulae are given by the following (which we do not need):
$$
{\rm E}(O_{2r}(\C^{2r})) = \prod_{j=1}^r (q^{2j-1}-1)q^{2j}, \quad  {\rm E}(O_{2r+1}(\C^{2r+1}))  = \prod_{j=0}^r (q^{2j+1}-1)q^{2j}.
$$
\end{remark}

%
%

\begin{proposition} \label{p.asymmetryRS}
The orbit closure $\overline{\0}^C_{2,1} =\overline{\0}^C_{[2^4, 1^2]} \subset \mathfrak{sp}(10)$ is not a mirror of the double cover of $\overline{\0}^B_{2,1} = \overline{\0}^B_{[3,2^2, 1^4]} \subset \mathfrak{so}(11)$.
\end{proposition}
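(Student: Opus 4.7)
The plan is to show that the two rational functions ${\rm E}_{\rm st}(\overline{\0}^C_{2,1})$ and ${\rm E}_{\rm st}(\overline{\0}^D_{2,1})$ disagree. By Proposition \ref{p.levy}, $\overline{\0}^D_{2,1}\subset \mathfrak{so}_{12}$ is the double cover of $\overline{\0}^B_{2,1}$, and by Remark \ref{r.leadingterm} its stringy $E$--function is the explicit polynomial of degree $28$
$${\rm E}_{\rm st}(\overline{\0}^D_{2,1}) = \cfrac{(q^5+1)(q^6-1)(q^7+1)(q^8-1)q^6}{(q^2-1)^2}.$$
Thus it suffices to compute ${\rm E}_{\rm st}(\overline{\0}^C_{2,1})$ in closed form and verify the inequality.

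For the computation, apply Proposition \ref{p.logresolutionC} with $r=2$, $l=1$: the log resolution $\Phi:\widehat Z_C = G_C\times^{P_C}\widehat{\n}_C \to \overline{\0}^C_{2,1}$ has four exceptional divisors $\D^C_0,\ldots,\D^C_3$ with discrepancies $(a_0,a_1,a_2,a_3)=(9,5,2,2)$. Each $\D^C_j$ is $G_C$--invariant, and $\widehat Z_C$ is a Zariski-locally trivial fibration over $G_C/P_C={\rm IG}(4,10)$, so every stratum $(\D^C_J)^{0}$ in the sense of \S3.2 also fibers over ${\rm IG}(4,10)$. Writing $F_J$ for the fiber of $(\D^C_J)^{0}$ over a basepoint, multiplicativity of the $E$--polynomial and the log-resolution formula yield
$${\rm E}_{\rm st}(\overline{\0}^C_{2,1}) \;=\; {\rm E}({\rm IG}(4,10))\cdot \mathbb{E},\qquad \mathbb{E} \;=\; \sum_{J\subset \{0,1,2,3\}} {\rm E}(F_J)\prod_{j\in J}\frac{q-1}{q^{a_j+1}-1},$$
with ${\rm E}({\rm IG}(4,10)) = (q^4-1)(q^6-1)(q^8-1)(q^{10}-1)/\bigl[(q-1)(q^2-1)(q^3-1)(q^4-1)\bigr]$ by Proposition \ref{p.EPolyGr}.

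To compute $\mathbb{E}$, stratify the fiber base $\n_C={\rm Sym}^2\C^4$ by rank strata $O_0\sqcup\cdots\sqcup O_4$ (whose $E$--polynomials are given by Corollary \ref{c.EPolyO}), and analyze the fiber of $\pi:\widehat{\n}_C\to \n_C$ above each $O_k$. Over $O_4$ the map is an isomorphism; over $O_3$ only $\D^C_3$ contributes, birationally. Over $O_2$, $O_1$, and the origin $O_0$, the fiber of $\pi$ is obtained by successively blowing up the projectivized normal space $\mathbb{P}^{\,\mathrm{codim}-1}$ along the strict transforms of the residual rank loci, i.e., it is a variety of complete quadrics $W_{4-k}$ in the sense of Corollary \ref{c.CP4}, and the divisors $\D^C_j$ meet this fiber as the divisors coming from its iterative construction together with the strict transform of the determinant hypersurface. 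Extracting ${\rm E}(F_J)$ from this description, summing, and multiplying by ${\rm E}({\rm IG}(4,10))$ yields ${\rm E}_{\rm st}(\overline{\0}^C_{2,1})$ in closed form.

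The proof concludes by comparing this closed form with ${\rm E}_{\rm st}(\overline{\0}^D_{2,1})$. The cleanest shortcut, following the pattern of Proposition \ref{p.r=1C}, would be to exhibit a cyclotomic factor---most plausibly $\Phi_5(q)$ inherited from the $\D^C_0$ denominator $q^{10}-1$ combined with the factor $q^{10}-1$ in ${\rm E}({\rm IG}(4,10))$---that fails to cancel, so that ${\rm E}_{\rm st}(\overline{\0}^C_{2,1})$ is not even a polynomial and a fortiori differs from ${\rm E}_{\rm st}(\overline{\0}^D_{2,1})$. If unexpected cancellations do produce a polynomial, one can instead simply exhibit a particular coefficient on which the two expressions disagree. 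The main obstacle throughout is the combinatorial bookkeeping in the third paragraph: tracking precisely how the strict transforms of the rank strata $\bar O_{k+1},\ldots,\bar O_3$ meet the iterated exceptional projective spaces in $W_4$ and which of the divisors $\D^C_j$ they represent; once these fiberwise $E$--polynomials are in hand, the remainder is routine cyclotomic algebra.
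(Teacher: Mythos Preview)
Your setup is exactly the paper's: the same log resolution $\Phi$ with discrepancies $(a_0,a_1,a_2,a_3)=(9,5,2,2)$, the same factorization ${\rm E}_{\rm st}(\overline{\0}^C_{2,1})={\rm E}({\rm IG}(4,10))\cdot\mathbb{E}$, and the same stratification of the fiber via $W_{4-k}$ over each $O_k$. Where you diverge is in how to finish. The paper does not attempt the full closed form; instead it observes that every term in $\overline{E}$ (the contributions from nonempty intersections $\D^C_J$, $|J|\ge 2$) has leading degree at most $4$, so the leading part of $\mathbb{E}$ is determined by the four ``single-divisor'' summands ${\rm E}(O_4)+\sum_k {\rm E}(O_k)\cP_{4-k}\frac{q-1}{q^{a_k+1}-1}$, giving $\mathbb{E}=q^{10}-q^9+q^7+q^5+O(q^4)$. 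Multiplying by the expansion of ${\rm E}({\rm IG}(4,10))$ yields leading terms $q^{28}+q^{26}+2q^{25}+\cdots$, and comparing the $q^{26}$-coefficient with Remark \ref{r.leadingterm} (where it is $2$) is already enough. This sidesteps precisely the ``combinatorial bookkeeping'' you flag as the main obstacle.

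Your proposed $\Phi_5$ shortcut is unlikely to succeed: the factor $q^{10}-1$ in the numerator of ${\rm E}({\rm IG}(4,10))$ cancels the $q^{10}-1$ coming from the $\D^C_0$-denominator, so there is no obvious surviving $\Phi_5$ in ${\rm E}_{\rm st}(\overline{\0}^C_{2,1})$ (and indeed the closed form given in the remark following the proposition has the $(q^{10}-1)$ in the denominator cancelled by the $(q^{2l+8}-1)$ factor when $l=1$). Your fallback---compute the closed form and compare a coefficient---would of course work, but the paper's leading-term argument reaches the same conclusion with a fraction of the effort.
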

\begin{proof}
We have $\mathfrak{n}_D ={\rm Sym}^2 \C^4$. Let $O_i \subset {\rm Sym}^2 \mathbb{C}^4$ be the set of elements of rank $i$. By Proposition \ref{p.logresolutionD}, we have a log resolution  $\Phi: \widehat{Z} \to \overline{\0}_{2,l}^D$ with discrepancies given by
$$
K_{\widehat{Z}} = 2 \D_3 + 2 \D_2 + 5 \D_1 + 9 \D_0.
$$

By definition, we have
$$
{\rm E}_{\rm st} (\overline{\0}^C_{2,1}) ={\rm E}({\rm IG}(4,10)) \times \mathbb{E},
$$
where
$$\mathbb{E} = {\rm E}(O_4) + {\rm E}(\D_0^\circ) \cfrac{q-1}{q^{10}-1} + {\rm E}(\D_1^\circ) \cfrac{q-1}{q^6-1} + {\rm E}(\D_2^\circ) \cfrac{q-1}{q^3-1} + {\rm E}(\D_3^\circ) \cfrac{q-1}{q^{3}-1} + \overline{E},
$$
and the last term $\bar{E}$ comes from contributions of intersections of exceptional divisors.

We will only consider the leading terms of ${\rm E}_{\rm st} (\overline{\0}^C_{2,1}) $. Recall that for an $m$--dimensional smooth variety, its E--polynomial has $q^m$ as the leading term. This implies that the leading terms in $\overline{E}$ are at most $q^4$.  Similarly, the leading terms coming from divisors
${\rm E}(\D_i^\circ) \cfrac{q-1}{q^{a_i+1}-1}$ are at most $q^7$.  This shows that the leading terms of ${\rm E}_{\rm st} (\overline{\0}^C_{2,1}) $ is given by
$$
{\rm E}({\rm IG}(4,10)) \times (q^{10}-q^9+O(q^7)).
$$
By Proposition \ref{p.EPolyGr}, we have
$$
{\rm E}({\rm IG}(4,10)) = \cfrac{(q^6-1)(q^8-1)(q^{10}-1)}{(q-1)(q^2-1)(q^3-1)} = q^{18}+q^{17}+2q^{16}+3q^{15}+4q^{14}+\cdots.
$$

It follows that the leading terms of ${\rm E}_{\rm st} (\overline{\0}^C_{2,1}) $ is given by
$$
q^{28}+q^{26}+\cdots,
$$
which is different from the leading terms of ${\rm E}_{\rm st} (\overline{\0}^D_{2,1}) $ (see Remark \ref{r.leadingterm}).
\end{proof}

\begin{remark}
By a more involved computation, we can actually show that $\overline{\0}^C_{2,l}$ is not a mirror of the double cover of $\overline{\0}^B_{2,l}$ for all $l \geq 1$.
\end{remark}

\end{document}